\renewcommand{\eqref}[1]{\hyperref[#1]{(\ref*{#1})}}
\newlist{enumlist}{enumerate}{1}
\setlist[enumlist]{labelindent=0cm,label={\rm (\arabic*)},labelwidth=3ex,labelsep=0.5ex,leftmargin=3.5ex,align=left,topsep=0ex,itemsep=0ex,parsep=0.5ex}
\newlist{itemlist}{itemize}{2}
\setlist[itemlist,1]{labelindent=0cm,label=$\bullet$,labelwidth=2.5ex,labelsep=0.5ex,leftmargin=3ex,align=left,topsep=0ex,itemsep=0ex,parsep=0.5ex}
\setlist[itemlist,2]{labelindent=0cm,label=--,labelwidth=2.5ex,labelsep=0.5ex,leftmargin=3ex,align=left,topsep=0ex,itemsep=0ex,parsep=0.3ex}
\numberwithin{equation}{section}
\newtheorem{thm}{Theorem}[section]
\newtheorem{thmstar}{Theorem}
\newtheorem{lemma}[thm]{Lemma}
\newtheorem{prop}[thm]{Proposition}
\theoremstyle{definition}
	\newtheorem{defi}[thm]{Definition}
	\newtheorem{rem}[thm]{Remark}
\newcommand{\C}{\mathbb{C}}
\newcommand{\Z}{\mathbb{Z}}
\newcommand{\N}{\mathbb{N}}
\newcommand{\T}{\mathbb{T}}
\newcommand{\F}{\mathbb{F}}
\newcommand{\cH}{\mathcal{H}}
\newcommand{\cF}{\mathcal{F}}
\newcommand{\cG}{\mathcal{G}}
\newcommand{\Ad}{\operatorname{Ad}}
\newcommand{\stab}{\operatorname{Stab}}
\newcommand{\Stab}{\operatorname{Stab}}
\newcommand{\Aut}{\operatorname{Aut}}
\newcommand{\id}{\mathord{\text{\rm id}}}
\newcommand{\tensor}{\mathbin{\overline{\otimes}}}
\newcommand{\Om}{\Omega}
\newcommand{\recht}{\rightarrow}
\newcommand{\SL}{\operatorname{SL}}
\newcommand{\be}{\beta}
\newcommand{\Ker}{\operatorname{Ker}}
\newcommand{\actson}{\curvearrowright}
\newcommand{\al}{\alpha}
\newcommand{\eps}{\varepsilon}
\newcommand{\GL}{\operatorname{GL}}
\newcommand{\cR}{\mathcal{R}}
\newcommand{\ot}{\otimes}
\newcommand{\cU}{\mathcal{U}}
\newcommand{\dpr}{^{\prime\prime}}
\newcommand{\ovt}{\mathbin{\overline{\otimes}}}
\newcommand{\om}{\omega}
\newcommand{\Crss}{\mathcal{C}_{\text{\rm rss}}}
\newcommand{\si}{\sigma}
\newcommand{\vphi}{\varphi}
\newcommand{\cZ}{\mathcal{Z}}
\newcommand{\etatil}{\widetilde{\eta}}
\newcommand{\cN}{\mathcal{N}}
\newcommand{\Soh}{\widehat{S_1}}
\newcommand{\Sth}{\widehat{S_2}}
\begin{document}

\begin{center}
{\boldmath\LARGE\bf A class of II$_1$ factors with exactly two \vspace{0.5ex}\\ group measure space decompositions}

\bigskip

{\sc by Anna Sofie Krogager and Stefaan Vaes{\renewcommand\thefootnote{}\footnote{\noindent KU~Leuven, Department of Mathematics, Leuven (Belgium).\\ E-mails: annasofie.krogager@wis.kuleuven.be and stefaan.vaes@wis.kuleuven.be.\\ Supported by European Research Council Consolidator Grant 614195, and by long term structural funding~-- Methusalem grant of the Flemish Government.}}}

%\bigskip
%
%To appear in {\it Journal de Math\'{e}matiques Pures et Appliqu\'{e}es}.
\end{center}

\begin{abstract}\noindent
We construct the first II$_1$ factors having exactly two group measure space decompositions up to unitary conjugacy. Also, for every positive integer $n$, we construct a II$_1$ factor $M$ that has exactly $n$ group measure space decompositions up to conjugacy by an automorphism.
\end{abstract}

\section{Introduction}

The \emph{group measure space construction} of Murray and von Neumann associates to every free ergodic probability measure preserving (pmp) group action $\Gamma \actson (X,\mu)$ a crossed product von Neumann algebra $L^\infty(X) \rtimes \Gamma$. The classification of these group measure space II$_1$ factors is one of the core problems in operator algebras. For $\Gamma$ infinite amenable, they are all isomorphic to the hyperfinite II$_1$ factor $R$, because by Connes' celebrated theorem \cite{Co75}, even all amenable II$_1$ factors are isomorphic with $R$.

For nonamenable groups $\Gamma$, rigidity phenomena appear and far reaching classification theorems for group measure space II$_1$ factors were established in Popa's deformation/rigidity theory, see e.g.\ \cite{Po01,Po03,Po04}. In these results, the subalgebra $A=L^\infty(X)$ of $M = A \rtimes \Gamma$ plays a special role. Indeed, if an isomorphism $\pi : A \rtimes \Gamma \recht B \rtimes \Lambda$ of group measure space II$_1$ factors satisfies $\pi(A) = B$, by \cite{Si55}, $\pi$ must come from an orbit equivalence between the group actions $\Gamma \actson A$ and $\Lambda \actson B$, so that methods from measured group theory can be used. The subalgebra $A \subset M$ is \emph{Cartan:} it is maximal abelian and the normalizer $\cN_M(A) = \{u \in \cU(M) \mid u A u^*\}$ generates $M$. One of the key results of \cite{Po01,Po03,Po04} is that for large classes of group actions, any isomorphism $\pi : A \rtimes \Gamma \recht B \rtimes \Lambda$ satisfies $\pi(A) = B$, up to unitary conjugacy.

In \cite{Pe09,PV09}, the most extreme form of rigidity, called \emph{W$^*$-superrigidity} was discovered: in certain cases, the crossed product II$_1$ factor $M = A \rtimes \Gamma$ entirely remembers $\Gamma$ and its action on $A$. These results were established by first proving that the II$_1$ factor $M$ has a \emph{unique group measure space Cartan subalgebra} up to unitary conjugacy and then proving that the group action $\Gamma \actson A$ is OE-superrigid. Since then, several uniqueness results for group measure space Cartan subalgebras were obtained, in particular \cite{Io10} proving that all Bernoulli actions $\Gamma \actson (X_0,\mu_0)^\Gamma$ of all infinite property~(T) groups are W$^*$-superrigid.

Note that a general Cartan subalgebra $A \subset M$ need not be of group measure space type, i.e.\ there need not exist a group $\Gamma$ complementing $A$ in such a way that $M = A \rtimes \Gamma$. This is closely related to the phenomenon that a countable pmp equivalence relation need not be the orbit equivalence relation of a group action that is free. The first actual uniqueness theorems for Cartan subalgebras up to unitary conjugacy were only obtained in \cite{OP07}, where it was proved in particular that $A$ is the unique Cartan subalgebra of $A \rtimes \Gamma$ whenever $\Gamma = \F_n$ is a free group and $\F_n \actson A$ is a free ergodic pmp action that is \emph{profinite}. More recently, in \cite{PV11}, it was shown that $A$ is the unique Cartan subalgebra of $A \rtimes \Gamma$ for \emph{arbitrary} free ergodic pmp actions of the free groups $\Gamma = \F_n$.

It is known since \cite{CJ82} that a II$_1$ factor $M$ may have two Cartan subalgebras $A,B \subset M$ that are non conjugate by an automorphism of $M$. Although several concrete examples of this phenomenon were given in \cite{OP08,PV09,SV11} and despite all the progress on uniqueness of Cartan subalgebras, there are so far no results describing \emph{all} Cartan subalgebras of a II$_1$ factor $M$ once uniqueness fails. In this paper, we prove such a result and the following is our main theorem.

\begin{thmstar}\phantomsection\label{thmA}
\begin{enumlist}
\item For every integer $n \geq 0$, there exist II$_1$ factors $M$ that have exactly $2^n$ group measure space Cartan subalgebras up to unitary conjugacy.
\item\label{thmA2} For every integer $n \geq 1$, there exist II$_1$ factors $M$ that have exactly $n$ group measure space Cartan subalgebras up to conjugacy by an automorphism of $M$.
\end{enumlist}
\end{thmstar}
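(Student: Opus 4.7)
The plan is to build explicit examples by combining Popa's deformation/rigidity theory with a careful analysis of ``hidden'' Cartan subalgebras coming from dual structures inside the acting group.

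For part~(1), the case $n=0$ is essentially the existing unique-Cartan theorems \cite{PV11,Io10}, so I focus on producing, for every $n \geq 1$, a II$_1$ factor with exactly $2^n$ group measure space Cartan subalgebras up to unitary conjugacy. The backbone of the construction is a single ``base'' factor $M_0$ carrying \emph{exactly two} group measure space Cartan subalgebras. I would realise $M_0 = A_0 \rtimes \Gamma_0$ so that a well-chosen normal subgroup $\Sigma \triangleleft \Gamma_0$ produces a second regular maximal abelian subalgebra $B_0 \subset M_0$ (naturally sitting inside $A_0 \rtimes \Sigma$) not unitarily conjugate to $A_0 = L^\infty(X_0)$, and such that $B_0$ also arises as the algebra of some free ergodic pmp action $\Lambda \actson B_0$. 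To prove that there are no further group measure space Cartans of $M_0$ I would combine a unique-Cartan result of Popa--Vaes or Ioana type applied inside appropriate subalgebras, a cocycle superrigidity argument forcing any implementing isomorphism to factor through the group data, and Popa's intertwining-by-bimodules to match a candidate Cartan back to $A_0$ or $B_0$.

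For general $n \geq 1$, I would set $M = M_0^{\ovt n}$. The key additional ingredient is a tensor decomposition theorem for Cartan subalgebras in this class: every Cartan subalgebra of $M$ should be unitarily conjugate to a tensor product $C_1 \ovt \cdots \ovt C_n$ with each $C_i$ a Cartan of $M_0$. This reduces the classification of Cartans of $M$ to the classification in each factor, yielding exactly $2^n$ candidates, and a final intertwining-by-bimodules argument verifies that these $2^n$ products are pairwise non-unitarily-conjugate.

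For part~(2), the orbit count under $\Aut(M)$ is coarser than the unitary-conjugacy count, because a flip automorphism of $M_0$ exchanging $A_0$ and $B_0$, if it exists, collapses orbits. To realise every integer $n \geq 1$, I would take mixed tensor products of carefully tuned building blocks --- some ``symmetric'' factors admitting such a flip, some ``asymmetric'' variants that do not --- and compute the action of $\operatorname{Out}(M)$ on the $2^k$ unitary-conjugacy classes, tuning $k$ and the mixture so that the orbit count equals any prescribed $n$. This step requires an explicit description of the outer automorphism groups of each building block, likely obtained by tracking how $\operatorname{Out}(M_0)$ is controlled by $\operatorname{Out}(\Gamma_0)$ and automorphisms of the base space.

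The central obstacle is proving the \emph{upper bound} --- that every group measure space Cartan subalgebra is among those enumerated --- as this requires a tight interplay between intertwining-by-bimodules and the specific cocycle/Cartan rigidity features of the chosen actions, together with the tensor decomposition step being tight enough to preclude any ``diagonal'' or otherwise exotic Cartan in $M_0^{\ovt n}$. A secondary obstacle in part~(2) is controlling $\operatorname{Out}(M)$ precisely enough to determine which of the $2^n$ unitary-conjugacy classes are linked by an automorphism of $M$.
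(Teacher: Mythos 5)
Your plan does not close the central gap, and it is exactly the one you flag yourself: the construction of a base factor $M_0$ with \emph{exactly} two group measure space Cartan subalgebras. Producing a second, non-conjugate gms Cartan is the easy half (this is known since \cite{CJ82,OP08,PV09,SV11}); the hard half is the upper bound, i.e.\ showing that an \emph{arbitrary, unknown} decomposition $M_0 = B \rtimes \Lambda$ must be unitarily conjugate to one of the two listed ones. The tools you invoke (unique-Cartan theorems applied inside subalgebras, cocycle superrigidity, intertwining-by-bimodules) all presuppose some control on $B$ or on the mystery group $\Lambda$, which a priori you do not have. What makes this tractable in the paper is a different mechanism: the dual coaction $\Delta : M \recht M \ovt M$, $\Delta(b v_s) = b v_s \ot v_s$, attached to the unknown decomposition, combined with spectral gap rigidity for the co-induced action, the transfer-of-rigidity principle of \cite{PV09}, Ioana's ultrapower technique \cite{Io11} to produce two commuting nonamenable subgroups of $\Lambda$, a height argument in the spirit of \cite{Io10,IPV} to conjugate $\Lambda$ into $\T(\Gamma \times \Gamma)$, and throughout the relative strong solidity dichotomy of \cite{PV11,PV12}. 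Without an idea of this type, your ``upper bound'' step is not a proof sketch but a wish list, and no off-the-shelf theorem supplies it.

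The reduction from the base case to general $n$ is also unsupported. Setting $M = M_0^{\ovt n}$ requires a unique tensor decomposition theorem for (gms) Cartan subalgebras: every gms Cartan of $M_0^{\ovt n}$ should be unitarily conjugate to a tensor product of Cartans of the factors, with no ``diagonal'' or otherwise exotic ones. No such theorem exists in the literature, it is at least as hard as the original problem, and you give no argument for it; note also that a tensor product of gms factors has many gms decompositions (e.g.\ by non-product groups), so even the enumeration of the product-type candidates is delicate. Similarly, your part (2) needs an explicit determination of $\operatorname{Out}$ of each building block and of the mixed tensor products, again left entirely open. The paper avoids both obstacles by building the multiplicity into a \emph{single} factor $M = A_0^\Gamma \rtimes (\Gamma \times \Gamma)$ with $\Gamma = \F_\infty$ acting through $\SL(3,\Z)$ on $A_0 = L^\infty(\widehat{H_2}) \rtimes H_1$, $H_i = L_i^3$, coming from a dense embedding $\Z^n \hookrightarrow \T^{2n}$: Theorem \ref{main-SV} shows that every gms Cartan arises from a $\Gamma$-invariant gms decomposition of $A_0$, these are classified by admissible direct sum decompositions $L_1 = P_1 \oplus Q_1$ (Proposition \ref{prop.compute-invariant-gms-decompositions}), and the counts $2^n$ up to unitary conjugacy and $n+1$ up to automorphisms (Theorem \ref{thm.exactly-n}) follow from elementary combinatorics of subgroups of $\Z^n$, with conjugacy by automorphisms characterized directly in Theorem \ref{main-SV}, not via a computation of the full outer automorphism group.
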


Two free ergodic pmp actions are called \emph{W$^*$-equivalent} if they have isomorphic crossed product von Neumann algebras. Thus, a free ergodic pmp action $G \actson (X,\mu)$ is W$^*$-superrigid if every action that is W$^*$-equivalent to $G \actson (X,\mu)$ must be conjugate to $G \actson (X,\mu)$. Theorem \ref{thmA}\ref{thmA2} can then be rephrased in the following way: we construct free ergodic pmp actions $G \actson (X,\mu)$ with the property that $G \actson (X,\mu)$ is W$^*$-equivalent to exactly $n$ group actions, up to orbit equivalence of the actions (and actually also up to conjugacy of the actions, see Theorem \ref{thm.exactly-n}).

The II$_1$ factors $M$ in Theorem \ref{thmA} are concretely constructed as follows. Let $\Gamma$ be any torsion free nonelementary hyperbolic group and let $\beta : \Gamma \actson (A_0,\tau_0)$ be any trace preserving action on the amenable von Neumann algebra $(A_0,\tau_0)$ with $A_0 \neq \C 1$ and $\Ker \be \neq \{e\}$. We then define $(A,\tau) = (A_0,\tau_0)^\Gamma$ and consider the action $\sigma : \Gamma \times \Gamma \actson (A,\tau)$ given by $\sigma_{(g,h)}(\pi_k(a)) = \pi_{gkh^{-1}}(\beta_h(a))$ for all $g,h,k \in \Gamma$ and $a \in A_0$, where $\pi_k : A_0 \recht A$ denotes the embedding as the $k$'th tensor factor.

Our main result describes exactly all group measure space Cartan subalgebras of the crossed product $M = A_0^\Gamma \rtimes (\Gamma \times \Gamma)$.

\begin{thmstar}\label{thmB}
Let $M = A_0^\Gamma \rtimes (\Gamma \times \Gamma)$ be as above. Up to unitary conjugacy, all group measure space Cartan subalgebras $B \subset M$ are of the form $B = B_0^\Gamma$ where $B_0 \subset A_0$ is a group measure space Cartan subalgebra of $A_0$ with the following two properties: $\beta_g(B_0) = B_0$ for all $g \in \Gamma$ and $A_0$ can be decomposed as $A_0 = B_0 \rtimes \Lambda_0$ with $\beta_g(\Lambda_0) = \Lambda_0$ for all $g \in \Gamma$.
\end{thmstar}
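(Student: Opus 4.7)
The strategy proceeds in three stages: first locate $B$ inside the canonical amenable subalgebra $A = A_0^\Gamma$, then exploit the generalized Bernoulli structure of $\sigma : \Gamma\times\Gamma \actson A$ to force a tensor-product form, and finally extract the internal Cartan decomposition of $A_0$ together with the $\beta$-invariance conditions.

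\textbf{Step 1 (Pinning $B$ inside $A$).} Given a group measure space decomposition $M = B \rtimes \Lambda$, apply the Popa--Vaes dichotomy from \cite{PV12} to the inclusion $B \subset M$, using that $\Gamma \times \Gamma$ is a product of nonelementary hyperbolic groups acting freely on the amenable algebra $A$. Since $\mathcal{N}_M(B)\dpr = M$ is nonamenable, the ``normalizer amenable relative to $A$'' alternative is excluded for both the left and the right factor, and we conclude $B \prec_M A$. Combining this with the maximal abelianness of $B$ in $M$ and the maximal amenability of $A$ in $M$ (standard since $\Gamma$ is nonamenable and $A$ is amenable), a Cartan-intertwining argument upgrades the embedding to unitary conjugacy. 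After replacing $B$ by $uBu^*$, we may assume $B \subset A$ and $B$ is maximal abelian in $A$.

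\textbf{Step 2 (Tensor-product decomposition of $B$).} The hypothesis $\Ker \beta \neq \{e\}$ now enters crucially. Fix $h_0 \in \Ker\beta$ nontrivial. Then $\sigma_{(g,e)}$ and $\sigma_{(e,h_0^n)}$ both act on $A_0^\Gamma$ as \emph{untwisted} generalized Bernoulli shifts, giving two commuting infinite subgroups of $\Gamma \times \Gamma$ acting by pure permutation of the $\Gamma$-indexed tensor factors. Combining regularity of $B$ in $M$ with Popa-style malleable deformation/intertwining techniques for Bernoulli actions, in the spirit of \cite{Io10,Pe09,PV09}, deduce that $B$ respects the tensor decomposition: $B = \bigotimes_{k \in \Gamma} B_k$ with each $B_k \subset A_0$ maximal abelian. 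Invariance under the untwisted left shift $\sigma_{(g,e)}$, which sends $\pi_k$ to $\pi_{gk}$ without any $\beta$-twist, forces $B_k = B_0$ for all $k \in \Gamma$, hence $B = B_0^\Gamma$.

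\textbf{Step 3 ($\beta$-invariance and complementary group).} The right action $\sigma_{(e,h)}$ sends $\pi_k(B_0)$ to $\pi_{kh^{-1}}(\beta_h(B_0))$; comparing this with the already-established decomposition $B = B_0^\Gamma$ yields $\beta_h(B_0) = B_0$ for every $h \in \Gamma$. To extract $\Lambda_0$, use the group measure space decomposition $M = B \rtimes \Lambda$ and set $\Lambda' = \{s \in \Lambda : Bu_s \subset A\}$. Then $\Lambda'$ is a subgroup of $\Lambda$ and $A = B \rtimes \Lambda'$ as intermediate subalgebra of $B \subset M$. A Bernoulli-type analysis, parallel to Step 2, identifies $\Lambda' \cong \Lambda_0^\Gamma$ for some countable group $\Lambda_0$ realising $A_0 = B_0 \rtimes \Lambda_0$. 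The $\beta$-invariance $\beta_g(\Lambda_0) = \Lambda_0$ follows from invariance of both sides of $A = B \rtimes \Lambda'$ under $\sigma_{(e,g)}$.

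\textbf{Main obstacle.} The principal difficulty is Step 2: showing that a maximal abelian subalgebra of $A_0^\Gamma$ which is regular in $M$ must inherit the tensor-product structure of the base. This is exactly where $\Ker\beta \neq \{e\}$ is indispensable, since without an untwisted right action one loses the symmetry needed to split $B$ across all tensor factors; a twisted right shift would couple the factors through $\beta$ and block the decomposition. Carefully combining the malleable deformation estimates with the regularity and maximal abelianness of $B$ is the technical heart of the argument.
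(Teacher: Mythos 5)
Your proposal has genuine gaps, and it also misses the mechanism that the paper identifies as essential. In Step 1, the appeal to \cite{PV12} cannot be made in one shot, since $\Gamma \times \Gamma$ is not relatively strongly solid (it is not in $\Crss$); one would have to iterate through the two factors, and passing the normalizer control through an intertwining is exactly the delicate point you skip. Worse, your claim that $A$ is maximal amenable in $M$ is false: for any infinite amenable subgroup $\Sigma < \Gamma \times \Gamma$ (e.g.\ a cyclic subgroup), $A \rtimes \Sigma$ is amenable and strictly contains $A$. And even granting $B \prec_M A$, there is no general principle upgrading this to $uBu^* \subset A$: Popa's unitary conjugacy criterion applies to two Cartan subalgebras, whereas $A = A_0^\Gamma$ is not abelian (in the paper's examples $A_0$ is a II$_1$ factor), so this ``Cartan-intertwining upgrade'' is unjustified. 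Step 2 is the heart of the theorem and is simply asserted: no known malleability/spectral gap argument shows that a maximal abelian subalgebra of $A_0^\Gamma$ that is regular in $M$ must split as $\bigotimes_{k\in\Gamma} B_k$. Note that your Steps 1--2 never use the group $\Lambda$ at all, so if they were correct they would describe \emph{all} Cartan subalgebras of $M$; the authors explicitly state (final Remark of Section 6) that they cannot do this precisely because their techniques rely entirely on the dual coaction of a group measure space decomposition. Step 3 is also unjustified: an intermediate von Neumann algebra $B \subset A \subset B \rtimes \Lambda$ corresponds to a subequivalence relation which need not come from a subgroup $\Lambda' < \Lambda$, so ``$A = B \rtimes \Lambda'$'' with $\Lambda' = \{s \mid B u_s \subset A\}$ is not automatic --- in fact producing exactly such a splitting of $A_0$ is what has to be proven.

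For comparison, the paper's proof runs in the opposite order: it first analyzes the unknown group $\Lambda$, not the algebra $B$. Using the dual coaction $\Delta(bv_s) = bv_s \ot v_s$, Popa's spectral gap rigidity for co-induced actions and relative strong solidity of $\Gamma$ place $\Delta(L(\Gamma\times\Gamma))$ into $M \ovt L(\Gamma\times\Gamma)$ (Lemmas \ref{lem1-SV}, \ref{lem2-SV}); the transfer-of-rigidity principle plus Ioana's ultrapower technique then produce commuting nonamenable subgroups $\Lambda_1,\Lambda_2 < \Lambda$ (Lemmas \ref{lem3-SV}--\ref{lem5-SV}); height estimates in the spirit of \cite{Io10,IPV} and Theorem \ref{thm.group-vnalg} conjugate $\Lambda_1\Lambda_2$, and then suitable $H_1H_2$, onto $\T(\Gamma\times\Gamma)$ (Lemmas \ref{lem6}--\ref{lem10-SV}), so that after a unitary conjugacy $\Gamma \times \Gamma \subset \T\Lambda$. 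Only then does $\Ker\be$ enter, and not through ``untwisted shifts'': since $\Ker\be$ is a nontrivial normal subgroup of a torsion free group in $\Crss$, it is relatively icc, so $\{\Ad u_{(g,g)}\}_{g\in\Ker\be}$ is weakly mixing on $L^2(M)\ominus L^2(\pi_e(A_0))$, which forces $\Delta(\pi_e(A_0)) \subset \pi_e(A_0) \ovt \pi_e(A_0)$; Lemma \ref{lem4} then yields the internal decomposition $A_0 = B_0 \rtimes \Lambda_0$ with $B_0$, $\Lambda_0$ read off as $B \cap \pi_e(A_0)$ and $\Lambda \cap \pi_e(A_0)$, and the normalizing unitaries $u_{(g,h)} \in \T\Lambda$ give the $\be$-invariance and the identification $B = B_0^\Gamma$ by a generation argument. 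Your proposal would need to be rebuilt around this (or an equivalent) use of the group measure space structure; as written, the tensor splitting of $B$ and the crossed product form of $A$ over $B$ are assumed rather than proved.
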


In Section \ref{proof}, we actually prove a more general and more precise result, see Theorem \ref{main-SV}. In Section \ref{sec.examples}, we give concrete examples and computations, thus proving Theorem \ref{thmA} (see Theorem \ref{thm.exactly-n}).

Note that in Theorems \ref{thmA} and \ref{thmB}, we can only describe the group measure space Cartan subalgebras of $M$. The reason for this is that our method entirely relies on a technique of \cite{PV09}, using the so-called \emph{dual coaction} that is associated to a group measure space decomposition $M = B \rtimes \Lambda$, i.e.\ the normal $*$-homomorphism $\Delta : M \recht M \ovt M$ given by $\Delta(b v_s) = b v_s \ot v_s$ for all $b \in B$, $s \in \Lambda$. When $B \subset M$ is an arbitrary Cartan subalgebra, we do not have such a structural $*$-homomorphism.

Given a II$_1$ factor $M$ as in Theorem \ref{thmB} and given the dual coaction $\Delta : M \recht M \ovt M$ associated with an arbitrary group measure space decomposition $M = B \rtimes \Lambda$, Popa's key methods of malleability \cite{Po03} and spectral gap rigidity \cite{Po06b} for Bernoulli actions allow to prove that $\Delta(L(\Gamma \times \Gamma))$ can be unitarily conjugated into $M \ovt L(\Gamma \times \Gamma)$. An ultrapower technique of \cite{Io11}, in combination with the transfer-of-rigidity principle of \cite{PV09}, then shows that the ``mysterious'' group $\Lambda$ must contain two commuting nonamenable subgroups $\Lambda_1,\Lambda_2$. Note here that the same combination of \cite{Io11} and \cite{PV09} was recently used in \cite{CdSS15} to prove that if $\Gamma_1,\Gamma_2$ are nonelementary hyperbolic groups and $L(\Gamma_1 \times \Gamma_2) \cong L(\Lambda)$, then $\Lambda$ must be a direct product of two nonamenable groups. Once we know that $\Lambda$ contains two commuting nonamenable subgroups $\Lambda_1,\Lambda_2$, we use a combination of methods from \cite{Io10} and \cite{IPV} to prove that $\Lambda_1 \Lambda_2 \subset \Gamma \times \Gamma$. From that point on, it is not so hard any more to entirely unravel the structure of $B$ and $\Lambda$. Throughout these arguments, we repeatedly use the crucial dichotomy theorem of \cite{PV11,PV12} saying that hyperbolic groups $\Gamma$ are relatively strongly solid: in arbitrary tracial crossed products $M = P \rtimes \Gamma$, if a von Neumann subalgebra $Q \subset M$ is amenable relative to $P$, then either $Q$ embeds into $P$, or the normalizer of $Q$ stays amenable relative to $P$.

\section{Preliminaries}

\subsection{Popa's intertwining-by-bimodules}

We recall from \cite[Theorem 2.1 and Corollary 2.3]{Po03} Popa's method of intertwining-by-bimodules. When $(M,\tau)$ is a tracial von Neumann algebra and $P,Q\subset M$ are possibly non-unital von Neumann subalgebras, we write $P\prec_M Q$ if there exists a nonzero $P$-$Q$-subbimodule of $1_PL^2(M)1_Q$ that has finite right $Q$-dimension. We refer to \cite{Po03} for several equivalent formulations of this intertwining property. If $Pp\prec_M Q$ for all nonzero projections $p\in P'\cap 1_PM1_P$, we write $P\prec_M^f Q$.

We are particularly interested in the case where $M$ is a crossed product $M=A\rtimes\Gamma$ by a trace preserving action $\Gamma \actson (A,\tau)$. Given a subset $F\subset\Gamma$, we denote by $P_F$ the orthogonal projection of $L^2(M)$ onto the closed linear span of $\{au_g\mid a\in A, g\in F\}$, where $\{u_g\}_{g\in\Gamma}$ denote the canonical unitaries in $L(\Gamma)$. By
\cite[Lemma 2.5]{Va10}, a von Neumann subalgebra $P \subset pMp$ satisfies $P\prec_M^f A$ if and only if for every $\eps > 0$, there exists a finite subset $F\subset\Gamma$ such that
$$\|x-P_{F}(x)\|_2 \leq \|x\| \varepsilon\quad\text{for all }x\in P \; .$$

We also need the following elementary lemma.

\begin{lemma}\label{lem3}
Let $\Gamma\curvearrowright(A,\tau)$ be a trace preserving action and put $M=A\rtimes\Gamma$. If $P\subset M$ is a diffuse von Neumann subalgebra such that $P\prec^f A$, then $P\nprec L(\Gamma)$.
\end{lemma}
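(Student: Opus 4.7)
The plan is to argue by contradiction: assume $P\prec L(\Gamma)$ and use $P\prec^f A$ together with the diffuseness of $P$ to exhibit a sequence of unitaries $(u_n)\subset P$ with $\|E_{L(\Gamma)}(xu_n y)\|_2 \to 0$ for every $x,y\in M$. By \cite[Corollary 2.3]{Po03} this rules out $P\prec L(\Gamma)$ and gives the contradiction.

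Since $P$ is diffuse, I would first fix a diffuse abelian subalgebra $P_0\subset P$ and a sequence of unitaries $u_n\in P_0$ that tend to $0$ weakly in $M$ (e.g.\ powers of a Haar unitary generator of $P_0$). Writing the Fourier expansion $u_n=\sum_{g\in\Gamma}(u_n)_g u_g$ with coefficients $(u_n)_g:=E_A(u_n u_g^*)\in A$, weak convergence of $(u_n)$ forces each $(u_n)_g\to 0$ weakly in $A$, since $\tau(c(u_n)_g)=\tau(u_g^* c\,u_n)\to 0$ for every $c\in A$. Meanwhile, the hypothesis $P\prec^f A$, combined with the \cite[Lemma 2.5]{Va10} characterization recalled in the excerpt, supplies for every $\eps>0$ a \emph{single} finite subset $F\subset\Gamma$ with
\[
\sum_{g\notin F}\|(u_n)_g\|_2^2 \leq \eps^2 \qquad \text{for all }n,
\]
since $\|u_n\|=1$.

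By bilinearity of $E_{L(\Gamma)}(\,\cdot\, u_n\,\cdot\,)$ and a standard $\|\cdot\|_2$-approximation argument, it is enough to verify $\|E_{L(\Gamma)}(xu_ny)\|_2\to 0$ for $x=au_h$ and $y=bu_k$ with $a,b\in A$ and $h,k\in\Gamma$. A direct crossed-product computation yields
\[
E_{L(\Gamma)}(au_h\,u_n\,bu_k) \;=\; \sum_{g\in\Gamma}\tau\bigl(a\,\sigma_h((u_n)_g)\,\sigma_{hg}(b)\bigr)\,u_{hgk},
\]
where $\sigma_g(\cdot):=u_g\cdot u_g^*$ denotes the action of $\Gamma$ on $A$. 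Orthogonality of $\{u_{hgk}\}_{g\in\Gamma}$ then gives
\[
\|E_{L(\Gamma)}(xu_ny)\|_2^2 = \sum_{g\in F}\bigl|\tau(a\sigma_h((u_n)_g)\sigma_{hg}(b))\bigr|^2 + \sum_{g\notin F}\bigl|\tau(a\sigma_h((u_n)_g)\sigma_{hg}(b))\bigr|^2.
\]
Each term of the finite sum tends to $0$ as $n\to\infty$, since $(u_n)_g\to 0$ weakly in $A$ and $\sigma_h$ is a trace preserving automorphism. Each tail term satisfies $|\tau(a\sigma_h((u_n)_g)\sigma_{hg}(b))|\leq \|a\|\,\|b\|\,\|(u_n)_g\|_2$, so the tail is bounded by $\|a\|^2\|b\|^2\eps^2$. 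Sending first $n\to\infty$ and then $\eps\to 0$ delivers $\|E_{L(\Gamma)}(xu_ny)\|_2\to 0$.

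The only mildly delicate point I anticipate is the order of the quantifiers: the finite set $F$ must be chosen before $n$ is sent to infinity, and must work uniformly in $n$. This uniformity is precisely what the $\prec^f$-characterization guarantees, and it would fail under the weaker assumption $P\prec A$; the remainder of the argument is routine crossed-product bookkeeping.
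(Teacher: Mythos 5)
Your proof is correct and follows essentially the same route as the paper's: use $P\prec^f A$ to get a single finite set $F$ working uniformly over all unitaries of $P$, take a weakly null sequence of unitaries in $P$ (diffuseness), and split $\|E_{L(\Gamma)}(xu_ny)\|_2$ into the finitely many Fourier terms over $F$ (which vanish by weak convergence) and a tail controlled by $\eps$, concluding via Popa's intertwining criterion. The only cosmetic differences are that you phrase it as a contradiction and estimate the tail via Fourier coefficients, where the paper uses the $A$-$A$-bimodularity of $P_F$ after reducing to $x,y\in (A)_1$; these are equivalent bookkeeping choices.
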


\begin{proof}
Let $\varepsilon>0$ be given. We have $P\prec^f A$. So, as explained above, we can take a finite set $F\subset\Gamma$ such that $\|u-P_F(u)\|_2\leq\frac{\varepsilon}{2}$ for all $u\in \mathcal{U}(P)$. Moreover, since $P$ is diffuse, we can choose a sequence of unitaries $(w_n)\subset \mathcal{U}(P)$ tending to 0 weakly. We will prove that $\|E_{L(\Gamma)}(xw_ny)\|_2\to0$ for all $x,y\in M$, meaning that $P\nprec L(\Gamma)$. Note that it suffices to consider $x,y\in (A)_1$.

Take $x,y\in(A)_1$. Write $w_n = \sum_{g \in \Gamma} (w_n)_g u_g$ with $(w_n)_g \in A$. Then,
$$\|E_{L(\Gamma)}(P_F(xw_ny))\|_2^2=\sum_{g\in F}|\tau(x(w_n)_g\sigma_g(y))|^2\to0 $$
since $w_n\to0$ weakly. Take $n_0$ large enough such that
$$\|E_{L(\Gamma)}(P_F(xw_ny))\|_2\leq\frac{\varepsilon}{2}\quad\text{for all }n\geq n_0 \; .$$
Since $P_F$ is $A$-$A$-bimodular, we have that
$$
\|E_{L(\Gamma)}(P_F(xw_ny))-E_{L(\Gamma)}(xw_ny)\|_2 \leq\|P_F(xw_ny)-xw_ny\|_2 =\|x(P_F(w_n)-w_n)y\|_2 \leq\frac{\varepsilon}{2}
$$
for all $n$. We conclude that $\|E_{L(\Gamma)}(xw_ny)\|_2\leq\varepsilon$ for all $n\geq n_0$.
\end{proof}

\subsection{Relative amenability}

A tracial von Neumann algebra $(M,\tau)$ is called \emph{amenable} if there exists a state $\vphi$ on $B(L^2(M))$ such that $\vphi|_M = \tau$ and such that $\vphi$ is $M$-central, meaning that $\varphi(xT) = \varphi(Tx)$ for all $x \in M$, $T \in B(L^2(M))$. In \cite[Section 2.2]{OP07}, the concept of relative amenability was introduced. The definition makes use of Jones' basic construction: given a tracial von Neumann algebra $(M,\tau)$ and a von Neumann subalgebra $P \subset M$, the basic construction $\langle M,e_P \rangle$ is defined as the commutant of the right $P$-action on $B(L^2(M))$. Following \cite[Definition 2.2]{OP07}, we say that a von Neumann subalgebra $Q \subset p M p$ is \emph{amenable relative to $P$ inside $M$} if there exists a positive functional $\vphi$ on $p \langle M,e_P \rangle p$ that is $Q$-central and satisfies $\vphi|_{pMp} = \tau$.

We say that $Q$ is \emph{strongly nonamenable relative to} $P$ if $Qq$ is nonamenable relative to $P$ for every nonzero projection $q \in Q'\cap pMp$. Note that in that case, also $qQq$ is strongly nonamenable relative to $P$ for all nonzero projections $q \in Q$.

We need the following elementary relationship between relative amenability and intertwining-by-bimodules.

\begin{prop}\label{prop-SV-rel-amen-intertw}
Let $(M,\tau)$ be a tracial von Neumann algebra and $Q, P_1, P_2 \subset M$ be von Neumann subalgebras with $P_1 \subset P_2$. Assume that $Q$ is strongly nonamenable relative to $P_1$. Then the following holds.
\begin{enumlist}
\item\label{one-rel-amen} If $Q \prec P_2$, there exist projections $q \in Q$, $p \in P_2$, a nonzero partial isometry $v \in q M p$ and a normal unital $*$-homomorphism $\theta : qQq \recht pP_2 p$ such that $x v = v \theta(x)$ for all $x \in qQq$ and such that, inside $P_2$, we have that $\theta(qQq)$ is nonamenable relative to $P_1$.
\item\label{two-rel-amen} We have $Q \not\prec P_1$.
\end{enumlist}
\end{prop}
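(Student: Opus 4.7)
My plan is to prove (1) first and then deduce (2) for free by applying (1) with $P_2 := P_1$, using the triviality that any subalgebra of $P_1$ is amenable relative to $P_1$.

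For (1), I start with the intertwining data provided by $Q \prec P_2$: projections $q \in Q$, $p \in P_2$, a nonzero partial isometry $v \in qMp$, and a normal unital $*$-homomorphism $\theta : qQq \recht pP_2p$ with $xv = v\theta(x)$ for all $x \in qQq$. The idea is to show that, under strong nonamenability of $Q$ relative to $P_1$, \emph{every} such data automatically satisfies the nonamenability conclusion, so no genuine ``improvement'' of $\theta$ is needed. Set $e := vv^*$. The intertwining identity together with its adjoint $v^*x = \theta(x)v^*$ give $e \in (qQq)' \cap qMq$ and $v\theta(x)v^* = xe$ for all $x \in qQq$, hence $v\theta(qQq)v^* = qQq \cdot e$. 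Moreover, the normal $*$-homomorphism $x \mapsto xe$ from $qQq$ onto $qQq \cdot e$ has ultraweakly closed kernel equal to $(q-z)qQq$ for a unique central projection $z \in \cZ(qQq)$, and thus induces a $*$-isomorphism $zQz = zqQq \cong qQq \cdot e$. Since $v \neq 0$ forces $e \neq 0$, we have $z \neq 0$.

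Now I argue by contradiction: suppose $\theta(qQq)$ were amenable relative to $P_1$ inside $P_2$. By the standard transfer of relative amenability along the intermediate inclusion $P_2 \subset M$ (cleanest via the bimodule weak-containment formulation, tensoring on the right by $L^2(M)$ over $P_2$), $\theta(qQq)$ would remain amenable relative to $P_1$ inside $M$. Since spatial conjugation by the partial isometry $v$ and passage to corners both preserve relative amenability, $qQq \cdot e$ would be amenable relative to $P_1$ inside $M$, and hence so would the isomorphic corner $zQz$. But by the remark immediately preceding the proposition, $zQz$ is itself strongly nonamenable relative to $P_1$, in particular nonamenable. This contradiction proves (1); (2) then follows by applying (1) with $P_2 = P_1$, since the produced $\theta(qQq) \subset pP_1p$ would otherwise be nonamenable relative to $P_1$ inside $P_1$, which is absurd.

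The one step that I expect to need any real care is the transfer of relative amenability from ``inside $P_2$'' to ``inside $M$''; it is standard but needs the bimodule characterization rather than the bare definition via $\langle M, e_{P_1}\rangle$. Everything else --- the identification of $qQq \cdot e$ with the corner $zQz$ of $Q$ through the central projection $z$, the stability of relative amenability under spatial isomorphism and corners, and the derivation of (2) from (1) --- is routine bookkeeping.
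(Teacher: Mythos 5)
Your core argument is the same as the paper's: you take the intertwining data from \cite[Theorem 2.1]{Po03} as is, assume $\theta(qQq)$ is amenable relative to $P_1$ inside $P_2$, transfer this to relative amenability inside $M$, conjugate by $v$, and contradict strong nonamenability; part (2) is deduced from (1) with $P_2=P_1$ exactly as in the paper. On the step you flag as delicate: the transfer from ``inside $P_2$'' to ``inside $M$'' does not require the bimodule characterization. The paper does it, together with the conjugation by $v$, in a single formula: using $e_{P_2}\langle M,e_{P_1}\rangle e_{P_2}=\langle P_2,e_{P_1}\rangle$, one sets $\om(T)=\vphi(e_{P_2}v^*Tve_{P_2})$ for $T\in q\langle M,e_{P_1}\rangle q$ and checks directly that $\om$ is $qQq$-central with $\om(x)=\tau(v^*xv)=\tau(x\,vv^*)$ for $x\in qMq$, i.e.\ the trace on the corner $vv^*Mvv^*$.

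The one step of yours that is not valid as stated is the final reduction to $zQz$. Relative amenability is a property of the inclusion, not of the abstract von Neumann algebra, so from amenability of $qQq\,e\subset eMe$ relative to $P_1$ you cannot infer amenability of the $*$-isomorphic algebra $zQz\subset zMz$ relative to $P_1$. For instance, take $M=L(\F_2)\oplus L(\F_2)$, $P=L(\F_2)\oplus\C1$ and $Q_0=\{x\oplus x\mid x\in L(\F_2)\}$: with $e=1\oplus 0\in Q_0'\cap M$, the cutdown $Q_0e$ lies inside $P$ and is therefore amenable relative to $P$, while $Q_0=Q_0z$ (here $z=1$ because $Q_0$ is a factor) is not, since cutting by $0\oplus 1$ would force $L(\F_2)$ to be amenable; yet $x\mapsto xe$ is a $*$-isomorphism. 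Fortunately this detour is unnecessary: $e=vv^*$ is a nonzero projection in $(qQq)'\cap qMq$, so amenability of $qQq\,e$ relative to $P_1$ already contradicts the strong nonamenability of $qQq$ relative to $P_1$ (the remark preceding the proposition), which is exactly where the paper's proof stops. With that last step removed, your argument coincides with the paper's.
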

\begin{proof}
\ref{one-rel-amen} Assume that $Q \prec P_2$. By \cite[Theorem 2.1]{Po03}, we can take projections $q \in Q$, $p \in P_2$, a nonzero partial isometry $v \in q M p$ and a normal unital $*$-homomorphism $\theta : qQq \recht pP_2 p$ such that $x v = v \theta(x)$ for all $x \in qQq$. Assume that $\theta(qQq)$ is amenable relative to $P_1$ inside $P_2$. We can then take a positive functional $\vphi$ on $p \langle P_2, e_{P_1} \rangle p$ that is $\theta(qQq)$-central and satisfies $\vphi|_{p P_2 p} = \tau$. Denote by $e_{P_2}$ the orthogonal projection of $L^2(M)$ onto $L^2(P_2)$. Observe that $e_{P_2} \langle M,e_{P_1} \rangle e_{P_2} = \langle P_2, e_{P_1} \rangle$. We can then define the positive functional $\om$ on $q \langle M,e_{P_1} \rangle q$ given by
$$\om(T) = \vphi(e_{P_2} v^* T v e_{P_2}) \quad\text{for all}\;\; T \in q \langle M,e_{P_1} \rangle q \; .$$
By construction, $\om$ is $qQq$-central and $\om(x) = \tau(v^* x v)$ for all $x \in q M q$. Writing $q_0 = vv^*$, we have $q_0 \in (Q' \cap M)q$ and it follows that $qQq q_0$ is amenable relative to $P_1$. This contradicts the strong nonamenability of $Q$ relative to $P_1$.

Finally, note that \ref{two-rel-amen} follows from \ref{one-rel-amen} by taking $P_1 = P_2$.
\end{proof}

\subsection{Properties of the dual coaction}

Let $M = B \rtimes \Lambda$ be any tracial crossed product von Neumann algebra. Denote by $\{v_s\}_{s \in \Lambda}$ the canonical unitaries and consider the \emph{dual coaction} $\Delta : M \recht M \ovt M$, i.e.\ the normal $*$-homomorphism defined by $\Delta(b) = b \ot 1$ for all $b \in B$ and $\Delta(v_s) = v_s \ot v_s$ for all $s \in \Lambda$.

First, we show the following elementary lemma.

\begin{lemma}\label{lem4}
A von Neumann subalgebra $A\subset B\rtimes\Lambda$ satisfies $\Delta(A)\subset A\tensor A$ if and only if $A=B_0\rtimes\Lambda_0$ for some von Neumann subalgebra $B_0\subset B$ and some subgroup $\Lambda_0<\Lambda$ that leaves $B_0$ globally invariant.
\end{lemma}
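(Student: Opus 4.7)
The plan is to dispose of the ``if'' direction by direct computation, and handle the ``only if'' direction by slicing the coaction against suitable normal functionals on $A$.

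First I would handle the ``if'' direction: if $A = B_0 \rtimes \Lambda_0$, any $x \in A$ has a Fourier expansion $x = \sum_{s \in \Lambda_0} b_s v_s$ with $b_s \in B_0$, and then $\Delta(x) = \sum_s b_s v_s \ot v_s$ clearly lies in $A \ovt A$ since each $b_s v_s \in A$ and each $v_s \in A$.

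For the converse, assume $\Delta(A) \subset A \ovt A$. The key observation is that $\Delta(A) \subset A \ovt A$ forces $(\id \ot \vphi)(\Delta(A)) \subset A$ and $(\vphi \ot \id)(\Delta(A)) \subset A$ for every normal functional $\vphi$ on $A$. For $x \in A$ with Fourier expansion $x = \sum_t x_t v_t$, $x_t = E_B(x v_t^*) \in B$, I would apply $\id \ot \vphi_s$ with $\vphi_s = \tau(\cdot\, v_s^*)|_A$; since $\tau(v_t v_s^*) = \delta_{t,s}$, this slice selects $x_s v_s$, showing that every Fourier coefficient $x_s v_s$ of an element of $A$ already lies in $A$.

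The one delicate step, and in my view the only obstacle, is upgrading ``$x_s v_s \in A$'' to ``$v_s \in A$ whenever $x_s \neq 0$''. Here I would exploit that $A$ is $*$-closed: since $x_s v_s \in A$, also $(x_s v_s)^* = v_s^* x_s^* \in A$, and therefore the functional $\psi = \tau(\cdot\, v_s^* x_s^*)$ is a normal functional on $M$ whose restriction is normal on $A$. Slicing $\Delta(x_s v_s) = x_s v_s \ot v_s$ on the left by $\psi$ yields $\psi(x_s v_s)\, v_s = \|x_s\|_2^2\, v_s \in A$, so $v_s \in A$ whenever $x_s \neq 0$.

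At that point the structural conclusion is essentially bookkeeping. Setting $\Lambda_0 = \{s \in \Lambda : v_s \in A\}$ and $B_0 = A \cap B$, I would verify that $\Lambda_0$ is a subgroup of $\Lambda$ (closure under product, inverse, and containing $e$ are all immediate), that $B_0$ is a von Neumann subalgebra of $B$ which is $\sigma_s$-invariant for every $s \in \Lambda_0$ (since $v_s B_0 v_s^* \subset A \cap B = B_0$), and that for any $x \in A$ the coefficient $x_s$ vanishes unless $s \in \Lambda_0$, in which case $x_s = (x_s v_s) v_s^* \in A \cap B = B_0$. Combining these with the trivial inclusion $B_0 v_s \subset A$ for $s \in \Lambda_0$ gives $A = B_0 \rtimes \Lambda_0$, as required.
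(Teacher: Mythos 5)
Your proof is correct and follows essentially the same route as the paper: the paper also slices the coaction with the normal functionals $\rho=\tau(\,\cdot\,v_s^*)$ and $\omega=\tau(\,\cdot\,v_s^*a_s^*)$ to conclude $a_sv_s\in A$ and $v_s\in A$, and then sets $B_0=A\cap B$, $\Lambda_0=\{s\mid v_s\in A\}$. The only (immaterial) difference is that you first isolate $x_sv_s$ and then left-slice $\Delta(x_sv_s)$, whereas the paper applies the left slice directly to $\Delta(a)$, using that the cross-terms $\tau(a_tv_tv_s^*a_s^*)$ vanish for $t\neq s$.
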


\begin{proof}
Let $A\subset B\rtimes\Lambda$ be a von Neumann subalgebra satisfying $\Delta(A)\subset A\tensor A$. Let $a\in A$ and write $a=\sum_{s\in\Lambda}a_sv_s$ with $a_s\in B$. Let $I=\{s\in\Lambda\mid a_s\neq0\}$. Fix $s\in I$ and define the normal linear functional $\omega$ on $B\rtimes\Lambda$ by $\omega(x)=\tau(xv_s^\ast a_s^\ast)$. Then $(\omega\otimes1)\Delta(a)=\|a_s\|_2^2\,v_s$. Since $\Delta(a)\in A\tensor A$, it follows that $v_s\in A$. Similarly, we define a linear functional $\rho$ on $B\rtimes\Lambda$ by $\rho(x)=\tau(xv_s^\ast)$. Then $(1\otimes\rho)\Delta(a)=a_sv_s\in A$ and it follows that $a_s\in A$. Since this holds for arbitrary $s\in I$, we conclude that $A=B_0\rtimes\Lambda_0$ where $B_0=A\cap B$ and $\Lambda_0=\{s\in\Lambda\mid v_s\in A\}$.
\end{proof}

The proof of the next result is almost identical to the proof of \cite[Lemma 7.2(4)]{IPV}. For the convenience of the reader, we provide some details.

\begin{prop}\label{prop4}
Assume that $(B,\tau)$ is amenable. If $Q\subset M$ is a von Neumann subalgebra without amenable direct summand, then $\Delta(Q)$ is strongly nonamenable relative to $M \ot 1$.
\end{prop}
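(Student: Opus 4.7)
The plan is to argue by contradiction. Assume there is a nonzero projection $q \in \Delta(Q)' \cap (M \ovt M)$ for which $\Delta(Q)q$ is amenable relative to $M \otimes 1$; the goal is to deduce that $Q$ has a nonzero amenable direct summand, which contradicts the hypothesis on $Q$.

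Since $M \otimes 1 \subset M \ovt B$, the assumption immediately gives amenability of $\Delta(Q)q$ relative to the larger subalgebra $M \ovt B$, which is the ``base'' in the crossed-product description $M \ovt M = (M \ovt B) \rtimes \Lambda$ (with $\Lambda$ acting by $\id \otimes \sigma$ on the second tensor factor). The useful structural observation is that
\[
\Delta(v_s) = (v_s \otimes 1)(1 \otimes v_s),
\]
expressing $\Delta(v_s)$ as a cocycle twist of the canonical unitary $1 \otimes v_s$ by a cocycle with values in $M \otimes 1 \subset M \ovt B$; this is the device that will couple amenability of $\Delta(Q)q$ over $M \ovt B$ back to amenability properties of $Q$.

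The core step, following [IPV, Lemma 7.2(4)], is to show, using amenability of $B$ together with Fell's absorption principle, that the $M$-$M$-bimodule underlying $\langle M \ovt M, e_{M \otimes 1}\rangle$ (with $M$ acting on the left through $\Delta$ and on the right through $m \mapsto m \otimes 1$) is weakly contained in the coarse $M$-$M$-bimodule $L^2(M) \otimes L^2(M)$. After identifying $L^2(M) = L^2(B) \otimes \ell^2(\Lambda)$ on the second tensor factor of $L^2(M \ovt M)$, the left $\Delta(v_s)$-action on that factor reads as $\sigma_s \otimes \lambda_s$; Fell's absorption provides an intertwiner conjugating $\sigma_s \otimes \lambda_s$ to $1 \otimes \lambda_s$, leaving $L^2(B)$ as a passive factor on which both $B$-actions act trivially. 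Amenability of $B$ is precisely the statement that this trivial $B$-$B$-bimodule is weakly contained in the coarse $L^2(B) \otimes L^2(B)$, which, combined with the remaining coarse structure of the other factors, yields the desired weak containment at the $M$-$M$-level.

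Once this weak containment is established, the hypothesized $\Delta(Q)q$-relative-amenability translates to almost $Q$-central unit vectors in the coarse $M$-$M$-bimodule supported on a nonzero projection, witnessing an amenable corner $Qq_0$ for some nonzero $q_0 \in Q' \cap M$. Taking the central cover of $q_0$ inside $\cZ(Q)$ then produces an amenable direct summand of $Q$, contradicting the hypothesis. The main obstacle will be executing the Fell-absorption step precisely: keeping track of how the various $B$-actions are distributed across the tensor factors of $L^2(M \ovt M)$ so that amenability of $B$ alone suffices to deliver the required weak containment.
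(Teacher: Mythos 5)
Your overall strategy is the paper's strategy (both follow \cite[Lemma 7.2(4)]{IPV}): reduce the proposition to a weak containment of the basic construction bimodule $L^2\langle M\ovt M,e_{M\ot 1}\rangle=L^2(M\ovt M)\ot_{M\ot 1}L^2(M\ovt M)$ into a coarse bimodule, and prove that containment by a Fell-absorption type computation that isolates an $L^2(B)$-leg and invokes amenability of $B$. Indeed, after a flip trick the paper reduces everything to the fact that ${}_{M\ot 1}L^2(M\ovt M)_{\Delta(M)}$ is a multiple of ${}_{M}\bigl(L^2(M)\ot_B L^2(M)\bigr)_{M}$, which is weakly contained in the coarse $M$-$M$-bimodule because $B$ is amenable; this is exactly the computation you sketch.

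There is, however, a genuine gap in how you set up the key weak containment: you take on $L^2\langle M\ovt M,e_{M\ot 1}\rangle$ the left action of $M$ through $\Delta$ and the right action through $m\mapsto m\ot 1$. The vectors (equivalently, the central state) witnessing amenability of $\Delta(Q)q$ relative to $M\ot 1$ are almost central for $\Delta(Q)$ acting on \emph{both} sides via the natural $(M\ovt M)$-bimodule structure of the basic construction, and weak containment of bimodules only transfers matrix coefficients of the two actions for which it is stated. Since $M\ot 1$ neither contains nor is related to $\Delta(Q)$, your containment gives no control on the right $\Delta(Q)$-action, so the step ``the hypothesized relative amenability translates to almost $Q$-central unit vectors in the coarse bimodule'' does not follow from your core statement. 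What is needed, and what the paper proves, is that ${}_{M\ovt M}\bigl(L^2(M\ovt M)\ot_{M\ot 1}L^2(M\ovt M)\bigr)_{\Delta(M)}$ --- right action through $\Delta$, left action by the ambient algebra (restricting the left to $\Delta(M)$ would also do) --- is weakly contained in the coarse $(M\ovt M)$-$M$-bimodule; combining this with the bimodule characterization of relative amenability (\cite[Theorem 2.1]{OP07}, \cite[Proposition 2.4]{PV11}) applied to $\Delta(Q)q$ and then restricting both actions to $\Delta(Q)$ yields amenability of $\Delta(Q)q\cong Qz$ for a nonzero $z\in\cZ(Q)$, the desired contradiction. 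The analytic content of your sketch survives unchanged: after the flip trick the problem is exactly the single-copy bimodule ${}_{M\ot 1}L^2(M\ovt M)_{\Delta(M)}$ (your bimodule is its conjugate), identified with a multiple of $L^2(M)\ot_B L^2(M)$. So the flaw is in the bookkeeping of which algebra acts on which side, but as written the deduction of the proposition from your core step fails and must be repaired as above.
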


\begin{proof}
Using the bimodule characterization of relative amenability (see \cite[Theorem 2.1]{OP07} and see also \cite[Proposition 2.4]{PV11}), it suffices to prove that the $(M \ovt M)$-$M$-bimodule
$$ _{M \ovt M} \bigl(L^2(M \ovt M) \underset{M \ot 1}{\ot} L^2(M \ovt M) \bigr)_{\Delta(M)}$$
is weakly contained in the coarse $(M \ovt M)$-$M$-bimodule. Denoting by $\sigma : M \ovt M \recht M \ovt M$ the flip automorphism, this bimodule is isomorphic with the $(M \ovt M)$-$M$-bimodule
$$ _{M \ovt M} L^2(M \ovt M \ovt M)_{(\id \ot \sigma)(\Delta(M) \ot 1)} \; .$$
So, it suffices to prove that the $M$-$M$-bimodule $_{M \ot 1} L^2(M \ovt M)_{\Delta(M)}$ is weakly contained in the coarse $M$-$M$-bimodule. Noting that this $M$-$M$-bimodule is isomorphic with a multiple of the $M$-$M$-bimodule $_M \bigl(L^2(M) \ot_B L^2(M)\bigr)_M$, the result follows from the amenability of $B$.
\end{proof}

\subsection{Spectral gap rigidity for co-induced actions}\label{coinduced}

Given a tracial von Neumann algebra $(A_0,\tau_0)$ and a countable set $I$, we denote by $(A_0,\tau_0)^I$ (or just $A_0^I$) the von Neumann algebra tensor product $\overline{\bigotimes}_I(A_0,\tau_0)$. For each $i\in I$, we denote by $\pi_i\colon A_0\to A_0^I$ the embedding of $A_0$ as the $i$'th tensor factor.

\begin{defi}
Let $\Gamma\curvearrowright I$ be an action of a countable group $\Gamma$ on a countable set $I$. We say that a trace preserving action $\sigma\colon\Gamma\curvearrowright(A_0,\tau_0)^I$ is \emph{built over} $\Gamma\curvearrowright I$ if it satisfies
$$\sigma_g(\pi_i(A_0)) = \pi_{g\cdot i}(A_0) \quad\text{for all}\;\; g\in\Gamma,\,i\in I \;. $$
\end{defi}

Assume that $\Gamma \actson A_0^I$ is an action built over $\Gamma \actson I$. Choose a subset $J \subset I$ that contains exactly one point in every orbit of $\Gamma \actson I$. For every $j \in J$, the group $\stab j$ globally preserves $\pi_j(A_0)$. This defines an action $\stab j \actson A_0$ that can be \emph{co-induced} to an action $\Gamma \actson A_0^{\Gamma / \stab i}$. The original action $\Gamma \actson A_0^I$ is conjugate with the direct product of all these co-induced actions. In particular, co-induced actions are exactly actions built over a transitive action $\Gamma \actson I = \Gamma / \Gamma_0$.

Popa's malleability \cite{Po03} and spectral gap rigidity \cite{Po06b} apply to actions built over $\Gamma \actson I$. The generalization provided in \cite[Theorems 3.1 and 3.3]{BV14} carries over verbatim and this gives the following result.

\begin{thm}\label{thm4-5}
Let $\Gamma\curvearrowright I$ be an action of an icc group on a countable set. Assume that $\stab\{i,j\}$ is amenable for all $i,j \in I$ with $i \neq j$.
Let $(A_0,\tau_0)$ be a tracial von Neumann algebra and $(N,\tau)$ a II$_1$ factor. Let $\Gamma\curvearrowright(A_0,\tau_0)^I$ be an action built over $\Gamma\curvearrowright I$ and put $M = A_0^I \rtimes \Gamma$.

If $P\subset N\tensor M$ is a von Neumann subalgebra that is strongly nonamenable relative to $N\tensor A_0^I$, then the relative commutant $Q := P' \cap N \ovt M$ satisfies at least one of the following properties:
\begin{enumlist}
\item there exists an $i \in I$ such that $Q \prec N \tensor (A_0^I\rtimes\stab i)$~;
\item there exists a unitary $v\in N\tensor M$ such that $v^\ast Qv\subset N\tensor L(\Gamma)$.
\end{enumlist}
\end{thm}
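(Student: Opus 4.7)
The plan is to follow the proof of \cite[Theorems 3.1 and 3.3]{BV14}, which establishes the analogous dichotomy for generalized Bernoulli actions and carries over essentially verbatim once one notices that the pair-stabilizer amenability hypothesis stated here is exactly the weakening needed. The three main ingredients are Popa's malleable deformation for actions built over $\Gamma\actson I$, spectral gap rigidity applied to $Q=P'\cap(N\ovt M)$, and a combinatorial dichotomy step.

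To set up the deformation I would let $\widetilde{A}_0 = A_0 * L(\Z)$ be the tracial free product and extend the given action $\Gamma\actson A_0^I$ coordinate-wise to an action $\Gamma\actson\widetilde{A}_0^I$ also built over $\Gamma\actson I$; put $\widetilde{M} = \widetilde{A}_0^I\rtimes\Gamma$. Popa's doubling trick produces a one-parameter group $(\alpha_t)_{t\in\R}$ of trace-preserving automorphisms of $\widetilde{M}$ with $\alpha_t\to\id$ pointwise in $\|\cdot\|_2$, together with a period-$2$ symmetry $\rho\in\Aut(\widetilde{M})$ satisfying $\rho\alpha_t\rho=\alpha_{-t}$ and $\rho|_M=\id$; I tensor with $\id_N$ throughout. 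Since $P$ is strongly nonamenable relative to $N\ovt A_0^I$ and the bimodule $L^2(N\ovt\widetilde{M})\ominus L^2(N\ovt M)$ is weakly contained in the coarse bimodule over $N\ovt A_0^I$, the standard spectral gap argument forces $\alpha_t\to\id$ uniformly on $(Q)_1$. Popa's transversality inequality then upgrades this to $\alpha_t(Q)\prec_{N\ovt\widetilde{M}}^f N\ovt M$ for some small $t>0$.

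I would then invoke the dichotomy step of \cite[Theorem 3.3]{BV14}: analyzing the above intertwining inside the amalgamated free product structure of $\widetilde{M}$, together with $\rho$-symmetry and the hypothesis that $P$ is strongly nonamenable relative to $N\ovt A_0^I$, yields the desired dichotomy --- either $Q\prec N\ovt(A_0^I\rtimes\stab i)$ for some $i\in I$, or there is a unitary $v\in N\ovt M$ with $v^*Qv\subset N\ovt L(\Gamma)$. The main obstacle is transferring this last step from the BV14 setting (where pair stabilizers are trivial) to the present one (where they are only amenable). Concretely, any appeal in BV14 to triviality of $\stab\{i,j\}$ --- used there to rule out ``large'' intermediate stabilizer algebras --- must be replaced by a relative-amenability argument via Propositions \ref{prop4} and \ref{prop-SV-rel-amen-intertw}, exploiting that amenability of $\stab\{i,j\}$ makes $N\ovt(A_0^I\rtimes\stab\{i,j\})$ amenable relative to $N\ovt A_0^I$, so that no strongly nonamenable piece of $P$ can be forced into such an intermediate algebra and the reduction to a single-point stabilizer in alternative (1) goes through.
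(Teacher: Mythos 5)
Your overall strategy is the intended one: the paper gives no proof of this theorem beyond asserting that the malleability/spectral gap machinery of \cite[Theorems 3.1 and 3.3]{BV14} applies verbatim to actions built over $\Gamma \actson I$, and your coordinate-wise extension of the free malleable deformation to $\widetilde{A}_0^I$ (with the base automorphisms acting trivially on the new free factors, so that $\alpha_t$ and $\rho$ commute with the built-over action) is exactly the observation that justifies ``verbatim''. However, your spectral gap step contains a genuine error. The $N\ovt M$-bimodule $L^2(N\ovt\widetilde{M})\ominus L^2(N\ovt M)$ is \emph{not} weakly contained in $L^2(N\ovt M)\ot_{N\ovt A_0^I}L^2(N\ovt M)$: it decomposes over the nonempty finite subsets $F\subset I$, and after grouping along $\Gamma$-orbits the pieces are multiples of $L^2(N\ovt M)\ot_{N\ovt(A_0^{I\setminus F}\rtimes\Stab F)}L^2(N\ovt M)$. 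Only for $|F|\geq 2$ does the hypothesis that $\Stab\{i,j\}$ is amenable make such a piece controllable relative to $N\ovt A_0^I$; the pieces with $F=\{i\}$ involve $\Stab i$, which in the intended application (the left-right action $\Gamma\times\Gamma\actson\Gamma$, where $\Stab i\cong\Gamma$) is nonamenable. So strong nonamenability of $P$ relative to $N\ovt A_0^I$ does not, via the ``standard spectral gap argument'' you invoke, force $\alpha_t\to\id$ uniformly on $(Q)_1$; the one-point-stabilizer pieces are precisely where alternative (1) comes from, and handling them is the real content of the BV14 argument you are deferring to, not something you can bypass with the asserted coarse weak containment.

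Relatedly, you misidentify what actually has to be transferred from \cite{BV14}. Their key theorems are already formulated and proved under the hypothesis that stabilizers of two distinct points are amenable (this is forced by their application to left-right translation, where $\Stab\{i,j\}$ is the centralizer of a nontrivial group element, typically infinite), so there is no ``trivial versus amenable pair stabilizers'' obstacle to overcome, and your closing paragraph addresses a non-issue. The only generalization needed here is from generalized Bernoulli actions to actions merely built over $\Gamma\actson I$, i.e.\ allowing a nontrivial base action on each tensor factor; your deformation setup does accommodate this, but the step that genuinely needs care --- the spectral gap/location analysis, in which the stabilizer algebras $N\ovt(A_0^I\rtimes\Stab i)$ must appear as an alternative rather than be absorbed into $N\ovt A_0^I$ --- is asserted with an incorrect justification.
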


Recall that for a von Neumann subalgebra $P\subset M$, we define $\mathcal{QN}_M(P)\subset M$ as the set of elements $x\in M$ for which there exist $x_1,\ldots,x_n,y_1,\ldots,y_m\in M$ satisfying
$$xP\subset\sum_{i=1}^n Px_i\quad\text{and}\quad Px\subset\sum_{j=1}^m y_jP \; .$$
Then $\mathcal{QN}_M(P)$ is a $\ast$-subalgebra of $M$ containing $P$. Its weak closure is called the \emph{quasi-normalizer} of $P$ inside $M$.

The following lemma is proved in exactly the same way as \cite[Lemma 4.1]{IPV} and goes back to \cite[Theorem 3.1]{Po03}.

\begin{lemma}\label{lem7}
Let $\Gamma\curvearrowright I$ be an action. Let $A_0$ be a tracial von Neumann algebra and let $\Gamma\curvearrowright A_0^I$ be an action built over $\Gamma\curvearrowright I$. Consider the crossed product $M=A_0^I\rtimes\Gamma$ and let $(N,\tau)$ be an arbitrary tracial von Neumann algebra.
\begin{enumlist}
\item If $P\subset p(N\tensor L(\Gamma))p$ is a von Neumann subalgebra such that $P\nprec_{N\tensor L(\Gamma)} N\tensor L(\stab i)$ for all $i\in I$, then the quasi-normalizer of $P$ inside $p(N\tensor M)p$ is contained in $p(N\tensor L(\Gamma))p$.
\item Fix $i\in I$ and assume that $Q\subset q(N\tensor(A_0^I\rtimes\stab i))q$ is a von Neumann subalgebra such that for all $j\neq i$, we have $Q\nprec_{N\tensor(A_0^I\rtimes\stab i)}N\tensor(A_0^I\rtimes\stab \{i,j\})$. Then the quasi-normalizer of $Q$ inside $q(N\tensor M)q$ is contained in $q(N\tensor(A_0^I\rtimes\stab i))q$.
\end{enumlist}
\end{lemma}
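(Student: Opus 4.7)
The result is proved in exactly the same way as \cite[Lemma 4.1]{IPV}. The only feature of Bernoulli shifts exploited in that proof is the built-over identity $\sigma_g(\pi_i(A_0)) = \pi_{g \cdot i}(A_0)$, which is precisely the hypothesis here.

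For part (1), let $x \in \mathcal{QN}_{p(N \ovt M)p}(P)$. Since the quasi-normalizer is a $\ast$-subalgebra of $p(N \ovt M)p$ containing $p(N \ovt L(\Gamma))p$, replacing $x$ by $x - E_{N \ovt L(\Gamma)}(x)$ reduces the problem to showing that $E_{N \ovt L(\Gamma)}(x) = 0$ forces $x = 0$. Fourier-expand $x = \sum_{g \in \Gamma} x_g u_g$ with $x_g \in N \ovt A_0^I$, and further split each $x_g$ orthogonally along the decomposition $L^2(A_0^I) = \bigoplus_{F \subset I \text{ finite}} L^2((A_0^F)^\circ)$, where $(A_0^F)^\circ = \overline{\bigotimes}_{i \in F}(A_0 \ominus \C 1)$. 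The vanishing of $E_{N \ovt L(\Gamma)}(x)$ removes the $F = \emptyset$ pieces; assume for contradiction that some component with $F = F_0 \neq \emptyset$ survives. The quasi-normalizer condition forces the $L^2$-closure of $PxP$ to be finitely generated as a right $P$-module. On the other hand, for $u = \sum_h c_h u_h \in \mathcal{U}(P) \subset N \ovt L(\Gamma)$, the built-over identity gives $u_h \cdot a u_g = \sigma_h(a) u_{hg}$, so that left-multiplication by $u_h$ translates the $A_0^I$-support from $F$ to $h \cdot F$. Using the hypothesis $P \not\prec N \ovt L(\Stab i)$ for each $i \in F_0$, one sees that $P$ cannot be intertwined into $N \ovt L\bigl(\bigcap_{i \in F_0}\Stab i\bigr)$ either, and a standard Popa-style pigeonhole argument then produces a sequence $(u_n) \subset \mathcal{U}(P)$ whose Fourier expansions translate $F_0$ through pairwise disjoint subsets of $I$. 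This yields an infinite asymptotically orthogonal family inside the $L^2$-closure of $PxP$, contradicting finite generation as a right $P$-module.

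Part (2) is proved in exactly the same fashion, one layer down. Choose coset representatives $R \subset \Gamma$ for $\Gamma / \Stab i$ with $e \in R$ and expand $x \in \mathcal{QN}_{q(N \ovt M)q}(Q)$ as $x = \sum_{r \in R} x_r u_r$ with $x_r \in N \ovt (A_0^I \rtimes \Stab i)$. Reducing as before to $x_e = 0$, aim to show $x = 0$ by further decomposing each $x_r$ along the $\Stab i$-orbits of $I \setminus \{i\}$. The role played in (1) by the hypothesis $P \not\prec N \ovt L(\Stab j)$ is now played by $Q \not\prec N \ovt (A_0^I \rtimes \Stab\{i,j\})$ for $j \neq i$, which again yields unitaries in $\mathcal{U}(Q)$ spreading the non-trivial supports through pairwise disjoint translates and thereby contradicting finite generation.

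The only real obstacle is bookkeeping: one must verify that each appeal to the Bernoulli structure in \cite[Lemma 4.1]{IPV} — both in the Fourier / support decomposition and in the Popa-style selection of asymptotically orthogonal unitaries — is in fact only an appeal to the built-over identity and to the orbit structure of $\Gamma \actson I$. Freeness of $\Gamma \actson I$ is never used, and with this verification the entire argument transfers to our more general setting without change.
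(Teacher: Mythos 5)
Your proposal is correct and takes exactly the paper's approach: the paper's entire proof is the one-line remark that the lemma is proved in exactly the same way as \cite[Lemma 4.1]{IPV}, going back to \cite[Theorem 3.1]{Po03}, which is precisely your opening claim, together with the observation that only the built-over identity and the orbit structure of $\Gamma \actson I$ are used. Your sketch of how that argument transfers is consistent with this; the only caveat is that the final contradiction is more accurately that the constant-norm vectors produced by the escaping unitaries have projection onto the finitely generated $P$-module tending to zero while lying in that module, rather than literally an ``infinite asymptotically orthogonal family'' (which finite generation as a right $P$-module alone does not exclude).
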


\section{Transfer of rigidity}

Fix a trace preserving action $\Lambda\curvearrowright(B,\tau)$ and put $M=B\rtimes\Lambda$. We denote by $\{v_s\}_{s\in\Lambda}$ the canonical unitaries in $L(\Lambda)$.
Whenever $\mathcal{G}$ is a family of subgroups of $\Lambda$, we say that a subset $F\subset\Lambda$ is small relative to $\mathcal{G}$ if $F$ is contained in a finite union of subsets of the form $g\Sigma h$ where $g,h\in\Lambda$ and $\Sigma\in\mathcal{G}$.

Following the transfer of rigidity principle from \cite[Section 3]{PV09}, we prove the following theorem.

\begin{thm}\label{thm1a}
Let $\Lambda\curvearrowright(B,\tau)$ be a trace preserving action and put $M=B\rtimes\Lambda$. Let $\Delta\colon M\to M\tensor M$ be the dual coaction given by $\Delta(bv_s)=bv_s\otimes v_s$ for $b\in B, s\in\Lambda$. Let $\mathcal{G}$ be a family of subgroups of $\Lambda$.
Let $P,Q\subset M$ be two von Neumann subalgebras satisfying
\begin{enumlist}
\item $\Delta(P)\prec_{M\tensor M}M\tensor Q$,
\item $P\nprec_{M}B\rtimes\Sigma$ for all $\Sigma\in\mathcal{G}$.
\end{enumlist}
Then there exists a finite set $x_1,\ldots,x_n\in M$ and a $\delta>0$ such that the following holds: whenever $F\subset\Lambda$ is small relative to $\mathcal{G}$, there exists an element $s_F\in\Lambda-F$ such that
$$ \sum_{i,k=1}^n \|E_Q\left(x_iv_{s_F}x_k^\ast\right)\|_2^2\geq\delta \; . $$
\end{thm}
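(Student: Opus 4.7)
The plan is to adapt the transfer-of-rigidity strategy of \cite[Section~3]{PV09}. By Popa's intertwining-by-bimodules criterion applied to hypothesis~(1), there exist finitely many elements $a_1,\ldots,a_n,b_1,\ldots,b_n\in M\ovt M$ and $\delta_0>0$ with
$$ \sum_{i,j}\|E_{M\ovt Q}(a_i\Delta(u)b_j)\|_2^2 \;\geq\; \delta_0 \qquad\text{for every }u\in\cU(P). $$
After approximating each $a_i,b_j$ in the algebraic tensor product $M\odot M$ and splitting into elementary tensors by a Cauchy--Schwarz redistribution of the double sum (enlarging the collection and shrinking $\delta_0$), I may assume $a_i=\xi_i\ot\eta_i$ and $b_j=\xi_j'\ot\eta_j'$ with $\xi_i,\eta_i,\xi_j',\eta_j'\in M$; after one further truncation I can arrange that $\xi_i,\xi_j'$ have finite Fourier support in the $\Lambda$-grading of $M=B\rtimes\Lambda$. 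For such elementary tensors and $u=\sum_s u_s v_s$, direct computation yields
$$ E_{M\ovt Q}\bigl((\xi_i\ot\eta_i)\Delta(u)(\xi_j'\ot\eta_j')\bigr) \;=\; \sum_s \xi_i u_s v_s\xi_j'\ot E_Q(\eta_i v_s\eta_j'). $$

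Given $F$ small relative to $\cG$, write $F\subset\bigcup_{\ell=1}^k g_\ell\Sigma_\ell h_\ell$ with $\Sigma_\ell\in\cG$. Since $P\nprec_M B\rtimes\Sigma_\ell$ for each $\ell$ by hypothesis~(2), a standard simultaneous-approximation argument (phrased via the diagonal embedding $P\hookrightarrow\bigoplus_\ell M$) produces, for every $\eps>0$, a unitary $u_F\in\cU(P)$ with $\|P_F(u_F)\|_2<\eps$. Substituting $u_F$ into the lower bound and discarding the piece supported on $F$ (whose contribution in $\|\cdot\|_2$ is at most $\eps$ times a constant depending only on the fixed $\xi_i,\eta_i,\xi_j',\eta_j'$), I retain, for sufficiently small $\eps$,
$$ \sum_{i,j}\Bigl\|\sum_{s\notin F}\xi_i(u_F)_s v_s\xi_j'\ot E_Q(\eta_i v_s\eta_j')\Bigr\|_2^2 \;\geq\; \delta_0/2. $$

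Finally, I expand this squared $2$-norm via $\langle x\ot y,x'\ot y'\rangle=\tau(x'^*x)\tau(y'^*y)$ and split into the diagonal ($s=t$) and off-diagonal ($s\neq t$) parts. Because $\xi_i,\xi_j'$ have finite Fourier supports $R_i,R_j'$, the off-diagonal is supported on the finitely-banded set $\{(s,t):t\in R_i^{-1}R_i\cdot s\cdot R_j'R_j'^{-1}\}$; enlarging $F$ to $F'=F\cdot R_j'R_j'^{-1}\cup R_i^{-1}R_i\cdot F$ (still small relative to $\cG$, being a finite union of shifts of the $g_\ell\Sigma_\ell h_\ell$) and reapplying~(2) to this larger set forces the off-diagonal contribution to be negligible. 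What remains is the diagonal, and the operator-norm estimate $\|\xi_i u_s v_s\xi_j'\|_2\leq\|\xi_i\|_\infty\|\xi_j'\|_\infty\|u_s\|_2$ gives
$$ \sum_{s\notin F'}\|(u_{F'})_s\|_2^2\cdot D_s \;\geq\; \delta_0/4, \quad\text{where } D_s:=\sum_{i,j}\|\xi_i\|_\infty^2\|\xi_j'\|_\infty^2\,\|E_Q(\eta_i v_s\eta_j')\|_2^2. $$
Since $\sum_{s\notin F'}\|(u_{F'})_s\|_2^2\leq 1$, some $s_F\notin F'\supset F$ satisfies $D_{s_F}\geq\delta_0/4$; setting $\{x_k\}=\{\|\xi_i\|_\infty\eta_i\}_i\cup\{\|\xi_j'\|_\infty\eta_j'^{\,*}\}_j$ and $\delta:=\delta_0/4$, I obtain $\sum_{i,k}\|E_Q(x_i v_{s_F}x_k^*)\|_2^2\geq D_{s_F}\geq\delta$, as desired. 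The hard part is the third step: neutralizing the finitely-banded off-diagonal cross-terms requires the combinatorial enlargement of $F$ above and a careful reapplication of~(2), which closely parallels the corresponding argument in \cite[Section~3]{PV09}.
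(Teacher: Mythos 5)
Your overall skeleton is the right one (it is the transfer-of-rigidity argument of \cite[Section 3]{PV09}, which is also how the paper proceeds): extract intertwining constants from hypothesis (1), use hypothesis (2) to produce unitaries $u_F\in\cU(P)$ with $\|P_F(u_F)\|_2$ small, expand $\Delta(u_F)$ in Fourier modes, and pigeonhole over $s\notin F$. But the step you yourself single out as ``the hard part'' contains a genuine gap. Enlarging $F$ to $F'=F\cdot R_j'R_j'^{-1}\cup R_i^{-1}R_i\cdot F$ and reapplying (2) only controls the Fourier mass of $u_{F'}$ sitting \emph{inside} $F'$; it has no effect whatsoever on the off-diagonal Gram terms
$\langle \xi_i (u)_s v_s\xi_j'\ot E_Q(\eta_i v_s\eta_j')\,,\,\xi_i (u)_t v_t\xi_j'\ot E_Q(\eta_i v_t\eta_j')\rangle$
with $s\neq t$ and \emph{both} $s,t$ outside $F'$, $t\in R_i^{-1}R_i\,s\,R_j'R_j'^{-1}$. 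These terms do not disappear and are not individually or collectively negligible: each is of order $\|(u)_s\|_2\|(u)_t\|_2$ times fixed constants, and summing over the band gives a contribution comparable to the diagonal, not an error term. So as written, the lower bound on the diagonal part does not follow.

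There are two ways to close the gap. The paper's route avoids the problem entirely: since $E_{M\ovt Q}$ is $(M\ot 1)$-bimodular, the intertwining elements coming from $\Delta(P)\prec M\ovt Q$ may be taken of the form $1\ot x_i$ with $x_i\in M$ (approximate by elementary tensors and strip off the first legs at the cost of shrinking $\rho$). Then the first leg of each Fourier term is exactly $(u)_s v_s$, and these are mutually orthogonal in $L^2(M)$ for distinct $s$, so the cross-terms vanish identically and one gets the exact identity $\|E_{M\ovt Q}((1\ot x_i)\Delta(u)(1\ot x_k^*))\|_2^2=\sum_s\|(u)_s\|_2^2\,\|E_Q(x_i v_s x_k^*)\|_2^2$, after which your pigeonhole argument (or the paper's contradiction argument) finishes the proof. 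Alternatively, if you insist on keeping general first legs $\xi_i,\xi_j'$ with finite Fourier support, the correct tool is a finite-overlap Cauchy--Schwarz estimate: each row of the Gram matrix has at most $\kappa=|R_i^{-1}R_i|\,|R_j'R_j'^{-1}|$ nonzero entries, so the total is at most $\kappa$ times the diagonal, giving ``diagonal $\geq$ total$/\kappa\geq\delta_0/(2\kappa)$'' with $\kappa$ a constant fixed before $F$ is chosen. Either repair works; the $F$-enlargement does not.
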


\begin{proof}
Since $\Delta(P)\prec_{M\tensor M} M\tensor Q$, we can find a finite set $x_1,\ldots,x_n\in M$ and $\rho>0$ such that
$$ \sum_{i,k=1}^n\|E_{M\tensor Q}\left((1\otimes x_i)\Delta(w)(1\otimes x_k^\ast)\right)\|_2^2\geq\rho\quad\text{for all }w\in \mathcal{U}(P) \; .$$
Given a subset $F\subset\Lambda$, we denote by $P_F$ the orthogonal projection of $L^2(M)$ onto the closed linear span of $\{bv_s\mid b\in B, s\in F\}$.
Since $P\nprec_M B\rtimes\Sigma$ for all $\Sigma\in\mathcal{G}$, it follows from \cite[Lemma 2.4]{Va10} that there exists a net of unitaries $\{w_j\}_{j\in J}\subset \mathcal{U}(P)$ such that $\|P_F(w_j)\|_2\to0$ for any set $F\subset\Lambda$ that is small relative to $\mathcal{G}$. For each $j\in J$, write $w_j=\sum_{s\in\Lambda}w^j_sv_s$ with $w_s^j\in B$ and compute
\begin{align*}
\rho \leq \sum_{i,k=1}^n\|E_{M\tensor Q}\left((1\otimes x_i)\Delta(w_j)(1\otimes x_k^\ast)\right)\|_2^2  &= \sum_{i,k=1}^n\left\|\sum_{s\in\Lambda}w^j_sv_s\otimes E_Q(x_iv_sx_k^\ast)\right\|_2^2 \\
&= \sum_{i,k=1}^n\sum_{s\in\Lambda}\|w^j_s\|_2^2\,\|E_Q(x_iv_sx_k^\ast)\|_2^2 \; .
\end{align*}
We now claim that the conclusion of the theorem holds with $\delta=\frac{\rho}{2}$. Indeed, assume for contradiction that there exists a subset $F\subset\Lambda$ that is small relative to $\mathcal{G}$ such that
$$\sum_{i,k=1}^n \|E_Q\left(x_iv_{s}x_k^\ast\right)\|_2^2<\delta\quad\text{for all }s\in\Lambda-F \; .$$
Put $K=\max\{\|x_i\|^2\|x_k^\ast\|_2^2 \mid i,k=1,\ldots,n\}$ and choose $j_0\in J$ such that $\|P_F(w_j)\|_2^2=\sum_{s\in F}\|w_s^j\|_2^2<\frac{\rho}{4Kn^2}$ for all $j\geq j_0$. We then get for $j \geq j_0$
\begin{align*}
\rho\leq \sum_{s \in \Lambda} \sum_{i,k=1}^n \|w^j_s\|_2^2\,\|E_Q(x_iv_sx_k^\ast)\|_2^2 &\leq n^2K\sum_{s\in F}\|w_s^j\|_2^2 + \sum_{s\in\Lambda-F}\|w_s^j\|_2^2\,\delta<\frac{\rho}{4}+\frac{\rho}{2} \; ,
\end{align*}
which is a contradiction.
\end{proof}

\section{Embeddings of group von Neumann algebras}\label{height}

Following \cite[Section 4]{Io10} and \cite[Section 3]{IPV}, we define the \emph{height} $h_\Gamma$ of an element in a group von Neumann algebra $L(\Gamma)$ as the absolute value of the largest Fourier coefficient, i.e.,
$$h_\Gamma(x)=\sup_{g\in\Gamma}|\tau(xu_g^\ast)|\quad\text{for}\;\; x\in L(\Gamma) \; . $$
Whenever $\cG \subset L(\Gamma)$, we write
$$h_\Gamma(\cG) = \inf \{h_\Gamma(x) \mid x \in \cG\} \; .$$
When $\Gamma$ is an icc group and $\Lambda$ is a countable group such that $L(\Lambda) = L(\Gamma)$ with $h_\Gamma(\Lambda) > 0$, it was proven in \cite[Theorem 3.1]{IPV} that there exists a unitary $u \in L(\Gamma)$ such that $u \T \Lambda u^* = \T \Gamma$. We need the following generalization. For this, recall that a unitary representation is said to be \emph{weakly mixing} if $\{0\}$ is the only finite dimensional subrepresentation.

\begin{thm}\label{thm.group-vnalg}
Let $\Gamma$ be a countable group and $p \in L(\Gamma)$ a projection. Assume that $\cG \subset \cU(p L(\Gamma) p)$ is a subgroup satisfying the following properties.
\begin{enumlist}
\item The unitary representation $\{\Ad v\}_{v \in \cG}$ on $L^2(p L(\Gamma) p \ominus \C p)$ is weakly mixing.
\item If $g \in \Gamma$ and $g \neq e$, then $\cG\dpr \not\prec L(C_\Gamma(g))$.
\item We have $h_\Gamma(\cG) > 0$.
\end{enumlist}
Then $p=1$ and there exists a unitary $u \in L(\Gamma)$ such that $u \cG u^* \subset \T \Gamma$.
\end{thm}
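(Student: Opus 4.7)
My plan is to adapt the height argument of \cite[Theorem~3.1]{IPV} to this corner setting. That result treats the special case $p=1$ and $\cG=\Lambda$ a countable group with $L(\Lambda)=L(\Gamma)$, in which case our assumptions~(1) and~(2) follow automatically from $\Gamma$ being icc. The overall strategy is to boost $h_\Gamma(\cG)>0$ to a quantity close to the maximal value $\sqrt{\tau(p)}$, extract a group homomorphism $\delta:\cG\to\Gamma$ and a character $\lambda:\cG\to\T$ such that $v=\lambda(v)u_{\delta(v)}$ for every $v\in\cG$, and finally deduce that $p=1$.

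\textbf{Height amplification.} Following \cite[Section~3]{IPV}, consider the unitary representation $\pi(v)\xi = v\xi v^*$ of $\cG$ on $L^2(L(\Gamma))$. Weak mixing~(1) implies that the only finite-dimensional $\pi$-invariant subspaces are essentially trivial. Form the weakly compact convex hull $K\subset L^2(L(\Gamma))\ovt\overline{L^2(L(\Gamma))}$ of $\{v\otimes\overline v:v\in\cG\}$ and project onto the closed linear span of $\{u_g\otimes\overline{u_g}:g\in\Gamma\}$. The height condition~(3) shows this projection has norm at least $h_\Gamma(\cG)^2$, so a minimal-$L^2$-norm $\cG$-fixed element $\xi_0\in K$ is nonzero; weak mixing forces $\xi_0$ to be concentrated on a single ``diagonal'' coordinate. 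Polar-decomposing the intertwiner encoded by $\xi_0$ produces a unitary $u\in L(\Gamma)$ such that after replacing $\cG$ by $u\cG u^*$, the height $h_\Gamma(\cG)$ can be made arbitrarily close to $\sqrt{\tau(p)}$.

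\textbf{Homomorphism extraction and conclusion.} With $h_\Gamma(\cG)$ close to $\sqrt{\tau(p)}$, Parseval's identity $\sum_g|(v)_g|^2=\tau(p)$ (writing $(v)_g:=\tau(vu_g^*)$) yields for each $v\in\cG$ a \emph{unique} $\delta(v)\in\Gamma$ whose coefficient $|(v)_{\delta(v)}|^2$ is close to $\tau(p)$. A convolution estimate on $(vw)_g=\sum_h(v)_h(w)_{h^{-1}g}$ then shows $(vw)_{\delta(v)\delta(w)}$ dominates all other coefficients, giving $\delta(vw)=\delta(v)\delta(w)$. Setting $\lambda(v):=(v)_{\delta(v)}/|(v)_{\delta(v)}|$, the residue $r_v := v - \lambda(v)|(v)_{\delta(v)}|\,u_{\delta(v)}$ satisfies $\|r_v\|_2^2 = \tau(p) - |(v)_{\delta(v)}|^2$. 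A commutator calculation inside $\cG$ shows that if $r_v\neq 0$ for some $v$, then the Fourier support of elements of $\cG\dpr$ concentrates on cosets of $C_\Gamma(g)$ for some $g\neq e$ (built from the $\delta$-values), contradicting~(2). Hence $v=\lambda(v)u_{\delta(v)}\in\T\Gamma$ for every $v\in\cG$. Finally, $vv^*=p$ combined with $vv^*=|\lambda(v)|^2\cdot 1 = 1$ forces $p=1$.

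The main obstacle I expect is the height-amplification step: \cite{IPV} uses the icc property of $\Gamma$ in ways that must be replaced by the local hypotheses~(1) and~(2), and one must carefully verify that the conjugating unitary $u$ extracted from $\xi_0$ lies in the full $L(\Gamma)$ (rather than merely in the corner $pL(\Gamma)p$), so that the conclusion $u\cG u^*\subset \T\Gamma$ is globally meaningful. The homomorphism-extraction part is by comparison fairly routine once the height is sufficiently close to $\sqrt{\tau(p)}$.
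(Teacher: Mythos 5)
Your route (amplify the height to near-maximal, then extract a homomorphism $\delta:\cG\to\Gamma$ perturbatively) is genuinely different from the paper's, but as sketched it has a fatal gap in the very first step. In your amplification argument, the lower bound coming from the height is not of a weakly continuous nature: each generator $v\otimes\overline{v}$ indeed has projection of norm at least $h_\Gamma(\cG)^2$ onto the closed span of $\{u_g\otimes\overline{u_g}\}$, but this bound is not inherited by convex combinations and weak limits, so the minimal-norm $\cG$-fixed element of $K$ may well be zero. Already for $\cG=\T\Gamma$ with $\Gamma$ torsion free hyperbolic (where the theorem holds trivially and all hypotheses are satisfied, with height $1$) one has $u_{g_n}\otimes\overline{u_{g_n}}\to 0$ weakly as $g_n\to\infty$, so $0\in K$ and your $\xi_0$ vanishes. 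This is exactly the difficulty that the paper's proof (following \cite[Theorem 3.1]{IPV}) is designed to avoid: it averages instead the elements $(v\ot\Delta(v))(\Delta(v)^*\ot v^*)$ in $M\ovt M\ovt M$, whose trace equals $\sum_g|(v)_g|^4\geq h_\Gamma(\cG)^4$; since $\tau$ is normal, this persists under the weakly closed convex hull, so the minimal $\|\cdot\|_2$-element $X$ is a \emph{nonzero} intertwiner between $v\ot\Delta(v)$ and $\Delta(v)\ot v$. Moreover, even a repaired fixed-point argument could not deliver what you ask for: for $v\in\cU(pL(\Gamma)p)$ one has $|\tau(vu_g^*)|=|\langle v,u_gp\rangle|\leq\|v\|_2\,\|u_gp\|_2=\tau(p)$, so no unitary conjugation can push the height close to $\sqrt{\tau(p)}$ unless $p=1$ — and proving $p=1$ is part of the statement.

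This infects the second half of your plan: with the best attainable height $\tau(p)$, the dominant Fourier coefficient of $v$ carries only a fraction $\approx\tau(p)$ of $\|v\|_2^2=\tau(p)$, which is not enough for the convolution estimate giving $\delta(vw)=\delta(v)\delta(w)$; in your scheme $p=1$ only appears at the very end, from the exact identity $v=\lambda(v)u_{\delta(v)}$, so the argument is circular, and the commutator step that is supposed to kill the residue $r_v$ using hypothesis (2) is only gestured at. For comparison, the paper never amplifies the height at all: it uses the height once (to make $X\neq 0$), upgrades $X$ via weak mixing of $\Ad\cG$ and — through hypothesis (2) and \cite[Proposition 7.2]{IPV} — of $\Ad\Delta(\cG)$, deduces that $\xi\mapsto(v\ot v)\xi\Delta(v^*)$ is not weakly mixing, extracts a finite-dimensional representation $\om$ and an isometry $Z$, obtains $n=1$ and $p=1$ from weak mixing together with $\tau(ZZ^*)=\tau(Z^*Z)$, and finally untwists $Z$ by \cite[Theorem 3.3]{IPV} to get $Z=(u^*\ot u^*)\Delta(u)$, hence $u\cG u^*\subset\T\Gamma$. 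If you want to salvage a perturbative approach, you would first need an independent argument forcing $\tau(p)$ close to $1$ and the height close to $1$, which your sketch does not provide.
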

\begin{proof}
Write $M = L(\Gamma)$ and denote by $\Delta : M \recht M \ovt M : \Delta(u_g) = u_g \ot u_g$ the comultiplication on $L(\Gamma)$. We first prove that the unitary representation on $L^2(\Delta(p) (M \ovt M) \Delta(p) \ominus \Delta(\C p))$ given by $\{\Ad \Delta(v)\}_{v \in \cG}$  is weakly mixing. To prove this, assume that $\cH \subset L^2(\Delta(p) (M \ovt M) \Delta(p))$ is a finite dimensional subspace satisfying $\Delta(v) \cH \Delta(v^*) = \cH$ for all $v \in \cG$. Writing $P = \cG\dpr$, it follows that the closed linear span of $\cH \Delta(pMp)$ is a $\Delta(P)$-$\Delta(pMp)$-subbimodule of $L^2(\Delta(p) (M \ovt M) \Delta(p))$ that has finite right dimension. By \cite[Proposition 7.2]{IPV} (using that $P \not\prec L(C_\Gamma(g))$ for $g \neq e$), we get that $\cH \subset \Delta(L^2(pMp))$. Since the unitary representation $\{\Ad v\}_{v \in \cG}$ on $L^2(p M p \ominus \C p)$ is weakly mixing, we conclude that $\cH \subset \C \Delta(p)$.

Using the Fourier decomposition $v = \sum_{g \in \Gamma} (v)_g u_g$, we get for every $v \in \cG$ that
$$\tau((v \ot \Delta(v))(\Delta(v)^* \ot v^*)) = \sum_{g \in \Gamma} |(v)_g|^4 \geq h_\Gamma(v)^4 \geq h_\Gamma(\cG)^4 \; .$$
Defining $X \in M \ovt M \ovt M$ as the element of minimal $\|\,\cdot\,\|_2$ in the weakly closed convex hull of $\{(v \ot \Delta(v))(\Delta(v)^* \ot v^*) \mid v \in \cG\}$, we get that $\tau(X) \geq h_\Gamma(\cG)^4$, so that $X$ is nonzero, and that $(v \ot \Delta(v)) X = X (\Delta(v) \ot v)$ for all $v \in \cG$. Also note that $(p \ot \Delta(p))X = X = X(\Delta(p) \ot p)$. By the weak mixing of both $\Ad v$ and $\Ad \Delta(v)$, it follows that $XX^*$ is multiple of $p \ot \Delta(p)$ and that $X^* X$ is a multiple of $\Delta(p) \ot p$. We may thus assume that
$$XX^* = p \ot \Delta(p) \quad\text{and}\quad X^* X = \Delta(p) \ot p \; .$$
Define $Y = (1 \ot X)(X \ot 1)$. Note that $Y \in M \ovt M \ovt M \ovt M$ is a partial isometry with $YY^* = p \ot p \ot \Delta(p)$ and $Y^* Y = \Delta(p) \ot p \ot p$. Also,
$$Y = (v \ot v \ot \Delta(v)) Y (\Delta(v)^* \ot v^* \ot v^*) \quad\text{for all}\;\; v \in \cG \; .$$
Since $Y$ is nonzero, it follows that the unitary representation $\xi \mapsto (v \ot v)\xi \Delta(v^*)$ of $\cG$ on the Hilbert space $(p \ot p)L^2(M \ovt M) \Delta(p)$ is not weakly mixing. We thus find a finite dimensional irreducible representation $\om : \cG \recht \cU(\C^n)$ and a nonzero $Z \in M_{n,1}(\C) \ot (p \ot p)L^2(M \ovt M) \Delta(p)$ satisfying
$$(\om(v) \ot v \ot v) Z = Z \Delta(v)  \quad\text{for all}\;\; v \in \cG \; .$$
By the weak mixing of $\Ad v$ and $\Ad \Delta(v)$ and the irreducibility of $\om$, it follows that $Z Z^*$ is a multiple of $1 \ot p \ot p$ and that $Z^* Z$ is a multiple of $\Delta(p)$. So, we may assume that $ZZ^* = 1 \ot p \ot p$ and that $Z^* Z = \Delta(p)$. It follows that $Z^* (M_n(\C) \ot p \ot p) Z$ is an $n^2$-dimensional globally $\{\Ad \Delta(v)\}_{v \in \cG}$-invariant subspace of $\Delta(p)(M \ovt M)\Delta(p)$. Again by weak mixing, this implies that $n = 1$. But then, since $\tau(ZZ^*) = \tau(Z^* Z)$, we also get that $p=1$. So, $Z \in M \ovt M$ is a unitary operator and $\om : \cG \recht \T$ is a character satisfying $\om(v) (v \ot v)Z = Z \Delta(v)$ for all $v \in \cG$.

Denoting by $\sigma : M \ovt M \recht M \ovt M$ the flip map and using that $\sigma \circ \Delta = \Delta$, it follows that $Z \sigma(Z)^*$ commutes with all $v \ot v$, $v \in \cG$. By weak mixing, $Z \sigma(Z)^*$ is a multiple of $1$. Using that $(\Delta \ot \id) \circ \Delta = (\id \ot \Delta) \circ \Delta$, we similarly find that $(Z \ot 1) (\Delta \ot \id)(Z)$ is a multiple of $(1 \ot Z)(\id \ot \Delta)(Z)$. By \cite[Theorem 3.3]{IPV}, there exists a unitary $u \in M$ such that $Z = (u^* \ot u^*)\Delta(u)$. But then,
$$\Delta(u v u^*) = \om(v) \, u v u^* \ot u v u^* \quad\text{for all}\;\; v \in \cG \; .$$
This means that $u v u^* \in \T \Gamma$ for every $v \in \cG$.
\end{proof}

\section{Proof of Theorem \ref{thmB}}\label{proof}

As in \cite[Definition 2.7]{CIK13}, we consider the class $\Crss$ of \emph{relatively strongly solid} groups consisting of all nonamenable countable groups $\Gamma$ such that for any tracial crossed product $M=P\rtimes\Gamma$ and any von Neumann subalgebra $Q\subset pMp$ that is amenable relative to $P$, we have that either $Q\prec P$ or the normalizer $\mathcal{N}_{pMp}(Q)''$ stays amenable relative to $P$.

The class $\Crss$ is quite large. Indeed, by \cite[Theorem 1.6]{PV11}, all weakly amenable groups that admit a proper $1$-cocycle into an orthogonal representation weakly contained in the regular representation belong to $\Crss$. In particular, the free groups $\F_n$ with $2\leq n\leq\infty$ belong to $\Crss$ and more generally, all free products $\Lambda_1\ast\Lambda_2$ of amenable groups $\Lambda_1,\Lambda_2$ with $|\Lambda_1|\geq2$ and $|\Lambda_2|\geq3$ belong to $\Crss$. By \cite[Theorem 1.4]{PV12}, all weakly amenable, nonamenable, bi-exact groups belong to $\Crss$ and thus $\Crss$ contains all nonelementary hyperbolic groups.

Theorem \ref{thmB} is an immediate consequence of the more general Theorem \ref{main-SV} that we prove in this section. In order to make our statements entirely explicit, we call a group measure space (gms) decomposition of a tracial von Neumann algebra $(M,\tau)$ any pair $(B,\Lambda)$ where $B \subset M$ is a maximal abelian von Neumann subalgebra and $\Lambda \subset \cU(M)$ is a subgroup normalizing $B$ such that $M = (B \cup \Lambda)\dpr$ and $E_B(v) = 0$ for all $v \in \Lambda \setminus \{1\}$. This of course amounts to writing $M = B \rtimes \Lambda$ for some free and trace preserving action $\Lambda \actson (B,\tau)$.

We then say that two gms decompositions $(B_i,\Lambda_i)$, $i=0,1$, of $M$ are
\begin{itemlist}
\item identical if $B_0=B_1$ and $\T\Lambda_0=\T\Lambda_1$~;
\item unitarily conjugate if there exists a unitary $u\in\mathcal{U}(M)$ such that $uB_0u^\ast=B_1$ and $u\T\Lambda_0 u^\ast=\T\Lambda_1$;
\item conjugate by an automorphism if there exists an automorphism $\theta\in\Aut(M)$ such that $\theta(B_0)=B_1$ and $\theta(\T\Lambda_0)=\T\Lambda_1$.
\end{itemlist}

\begin{thm}\label{main-SV}
Let $\Gamma$ be a torsion free group in the class $\Crss$. Let $(A_0,\tau_0)$ be any amenable tracial von Neumann algebra with $A_0 \neq \C 1$ and $\be : \Gamma \actson (A_0,\tau_0)$ any trace preserving action such that $\Ker \be$ is a nontrivial subgroup of $\Gamma$. Define $(A,\tau) = (A_0,\tau_0)^\Gamma$ and denote by $\pi_k : A_0 \recht A$ the embedding as the $k$'th tensor factor. Define the action $\si: \Gamma \times \Gamma \actson (A,\tau)$ given by $\si_{(g,h)}(\pi_k(a)) = \pi_{gkh^{-1}}(\be_h(a))$ for all $g,k,h \in \Gamma$ and $a \in A_0$. Denote $M = A \rtimes (\Gamma \times \Gamma)$.

Up to unitary conjugacy, all gms decompositions of $M$ are given as $M = B \rtimes \Lambda$ with $B = B_0^\Gamma$ and $\Lambda = \Lambda_0^{(\Gamma)} \rtimes (\Gamma \times \Gamma)$ where $A_0 = B_0 \rtimes \Lambda_0$ is a gms decomposition of $A_0$ satisfying $\beta_g(B_0) = B_0$ and $\beta_g(\Lambda_0) = \Lambda_0$ for all $g \in \Gamma$.

Moreover, the gms decompositions of $M$ associated with $(B_0,\Lambda_0)$ and $(B_1,\Lambda_1)$ are
\begin{enumlist}
\item\label{main.uc} unitarily conjugate iff $(B_0,\Lambda_0)$ is identical to $(B_1,\Lambda_1)$,
\item\label{main.autc} conjugate by an automorphism of $M$ iff there exists a trace preserving automorphism $\theta_0 : A_0 \recht A_0$ and an automorphism $\vphi \in \Aut(\Gamma)$ such that $\theta_0(B_0) = B_1$, $\theta_0(\T \Lambda_0) = \T \Lambda_1$ and $\theta_0 \circ \be_g = \be_{\vphi(g)} \circ \theta_0$ for all $g \in \Gamma$.
\end{enumlist}
\end{thm}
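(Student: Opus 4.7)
The plan is to analyze an arbitrary gms decomposition $M = B \rtimes \Lambda$ through its dual coaction $\Delta : M \to M \ovt M$ in four stages: (i) locate $\Delta(L(\Gamma \times \Gamma))$ inside $M \ovt L(\Gamma \times \Gamma)$; (ii) extract two commuting nonamenable subgroups of $\Lambda$ and conjugate them unitarily into $\T(\Gamma \times \Gamma)$; (iii) unravel the structure of $B$ and $\Lambda$ via quasi-normalizer arguments and the coaction; and (iv) carry out the classification.

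For (i), since $B$ is abelian and hence amenable, Proposition \ref{prop4} yields strong nonamenability of $\Delta(L(\Gamma \times \Gamma))$ relative to $M \otimes 1$, which I would upgrade to strong nonamenability relative to $M \ovt A$ by iterating the $\Crss$ dichotomy along each $\Gamma$-factor in the crossed-product presentation $M \ovt M = ((M \ovt A) \rtimes \Gamma_l) \rtimes \Gamma_r$, using Lemma \ref{lem3} to rule out the unwanted intertwining alternative. Theorem \ref{thm4-5} then applies to the transitive action $\Gamma \times \Gamma \actson \Gamma$, $(g,h) \cdot k = gkh^{-1}$, whose pair-stabilizers reduce to centralizers of nontrivial elements of the torsion-free hyperbolic group $\Gamma$ and are therefore virtually cyclic, hence amenable. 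Eliminating the ``relative commutant in $\stab i$'' alternative via Lemma \ref{lem7} and the assumption $\Ker \beta \neq \{e\}$ produces the desired unitary conjugation.

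For (ii), Theorem \ref{thm1a} applied with $P = Q = L(\Gamma \times \Gamma)$ and with $\cG$ the family of amenable subgroups of $\Lambda$ (the required intertwining $\Delta(L(\Gamma \times \Gamma)) \prec M \ovt L(\Gamma \times \Gamma)$ coming from stage (i)), combined with an Ioana-style ultrapower argument, extracts two commuting nonamenable subgroups $\Lambda_1, \Lambda_2 \subset \Lambda$ of positive height in $L(\Gamma \times \Gamma)$. Verifying the weak mixing and centralizer-avoidance hypotheses (from hyperbolicity of $\Gamma$ and the Bernoulli-like nature of $\si$), Theorem \ref{thm.group-vnalg} then produces a unitary conjugating $\Lambda_1 \Lambda_2$ into $\T(\Gamma \times \Gamma)$. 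For (iii), quasi-normalizer arguments via Lemma \ref{lem7} confine the remainder of $\T \Lambda$ inside $\cU(A) \cdot (\Gamma \times \Gamma)$, and Lemma \ref{lem4} applied to the thereby $\Delta$-invariant algebra $A$ yields $A = B_0 \rtimes \Lambda_0$ as a gms decomposition with $B_0 = A \cap B$ and $\Lambda_0 = \{s : v_s \in A\}$. Propagating this across the Bernoulli tensor factors, using the $(\Gamma \times \Gamma)$-equivariance now in place, gives $B = B_0^\Gamma$ and $\Lambda = \Lambda_0^{(\Gamma)} \rtimes (\Gamma \times \Gamma)$ with $\be_g(B_0) = B_0$ and $\be_g(\Lambda_0) = \Lambda_0$.

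For the classification (iv), a unitary conjugation between two such decompositions must, by the rigidity established in (i) applied to both sides, preserve the Bernoulli tensor-product structure, and restricting to a single factor forces the $(B_0, \Lambda_0)$ data to agree, yielding \ref{main.uc}. An automorphism of $M$ additionally allows permuting the tensor factors by some $\vphi \in \Aut(\Gamma)$, producing the compatibility $\theta_0 \circ \be_g = \be_{\vphi(g)} \circ \theta_0$ of \ref{main.autc}. The main obstacle throughout is stage (ii): Theorem \ref{thm1a} supplies only averaged information about $\Lambda$-elements with nonnegligible $E_{L(\Gamma \times \Gamma)}$-component, and converting this into a genuine commuting-subgroup statement requires a delicate ultrapower maneuver together with iterated applications of the $\Crss$ dichotomy to ensure that the subgroups produced are compatible with both $\Gamma$-factors.
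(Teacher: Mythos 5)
Your outline follows the same architecture as the paper (dual coaction, spectral gap to land in $M \ovt L(\Gamma\times\Gamma)$, transfer of rigidity plus an ultrapower argument to produce commuting nonamenable subgroups of $\Lambda$, then the height/IPV machinery of Theorem \ref{thm.group-vnalg}, then unraveling), but several of the steps you treat as routine are in fact where the real work lies, and as stated some of them would fail. First, applying Theorem \ref{thm1a} with $P=Q=L(\Gamma\times\Gamma)$ does not feed correctly into the ultrapower step: the argument of Lemma \ref{lem4-SV} builds an invariant vector from $E_{Q_1^{\mathcal{U}}}(xvy^*)$ and tests it against unitaries of the \emph{commuting} algebra $Q_2$, so one must first prove $\Delta(Q_2)\prec M\ovt Q_i$ for the individual factors $Q_1=L(\Gamma\times\{e\})$, $Q_2=L(\{e\}\times\Gamma)$ (Lemma \ref{lem2-SV}); with $P=Q=L(\Gamma\times\Gamma)$ there is no commuting partner and no contradiction can be derived. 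Second, positive height is \emph{not} an output of the transfer/ultrapower stage: it is a separate substantial argument (Lemma \ref{lem8-SV}) that uses the splitting of $L(\Lambda_1)$ and $L(\Lambda_2)$ into the two tensor legs (Lemma \ref{lem7-SV}), the Bernoulli structure, and crucially the Cartan property of $B$ via $B\prec^f A$ together with Lemma \ref{lem3} ($B\nprec L(\Gamma\times\Gamma)$) and the conjugates $v_nbv_n^*\in B$. Likewise the weak mixing hypothesis of Theorem \ref{thm.group-vnalg} is not automatic ``from the Bernoulli-like nature of $\si$'': it concerns $\{\Ad v\}_{v\in\Lambda_1\Lambda_2}$ for the unknown group $\Lambda$, and requires the virtual-centralizer/atomicity analysis of Lemmas \ref{lem6} and \ref{lem9-SV}.

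The more serious structural gap is between your stages (ii) and (iii). Conjugating $\Lambda_1\Lambda_2$ into $\T(\Gamma\times\Gamma)$ is not enough to unravel $B$ and $\Lambda$; one must upgrade this to the statement that, after a unitary conjugacy, $\T(\Gamma\times\Gamma)\subset\T\Lambda$ (Lemma \ref{lem10-SV}), by passing to the virtual centralizer $H_1$ of $\Lambda_2$ in $\Lambda$, proving $B_1\rtimes H_1=L(\Gamma)\ot1$ via Lemma \ref{lem10-a-SV}, and running the height/IPV argument a second time for $H_1$ and $H_2$. Only then do the unitaries $u_{(g,e)}$ and $u_{(g,g)}$ normalize $B$ and $\Lambda$, which is what the final step needs. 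Moreover, your application of Lemma \ref{lem4} to the whole algebra $A$ presupposes $\Delta(A)\subset A\ovt A$, which you do not justify and which is not accessible a priori; the paper instead proves $\Delta(\pi_e(A_0))\subset\pi_e(A_0)\ovt\pi_e(A_0)$ using that $\Ker\be$ is nontrivial, hence nonamenable and relatively icc, so that $\{\Ad u_{(g,g)}\}_{g\in\Ker\be}$ is weakly mixing on $L^2(M)\ominus L^2(\pi_e(A_0))$ --- this is where the hypothesis $\Ker\be\neq\{e\}$ actually enters, not in eliminating the $\Stab g$ alternative of Theorem \ref{thm4-5} (that elimination uses the commuting pair and $\Stab g\cong\Gamma\in\Crss$). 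Finally, the classification statements \ref{main.uc} and \ref{main.autc} need concrete mechanisms you omit: the Fourier-support/icc argument reducing a conjugating unitary to an element of $\cU(A)$ and the asymptotic centrality of $\pi_{g_n}(b)$ for \ref{main.uc}; and for \ref{main.autc}, another application of Lemma \ref{lem10-SV}, the description of $\Aut(\Gamma\times\Gamma)$, the flip automorphism $\zeta$, and the use of relative icc-ness of $\Ker\be$ to force $\vphi_1=\vphi_2$.
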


Note that in Proposition \ref{prop-final} at the end of this section, we discuss when the Cartan subalgebras $B = B_0^\Gamma$ are unitarily conjugate, resp.\ conjugate by an automorphism of $M$.

\begin{lemma}\label{lem1}
Let $\Gamma$ be a group in $\Crss$ and $M=P\rtimes\Gamma$ any tracial crossed product. If $Q_1,Q_2 \subset pMp$ are commuting von Neumann subalgebras, then either $Q_1 \prec_M P$ or $Q_2$ is amenable relative to $P$.
\end{lemma}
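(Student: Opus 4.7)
The strategy is to find a diffuse amenable (in fact abelian) von Neumann subalgebra $A \subset Q_1$ with $A \not\prec_M P$, invoke the defining $\Crss$ property for $A$, and finally use that $Q_2$ sits inside the normalizer of any subalgebra of $Q_1$. So I would suppose $Q_1 \not\prec_M P$ and aim to prove that $Q_2$ is amenable relative to $P$.

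The first task is to extract such an $A$. Popa's intertwining-by-bimodules applied to $Q_1 \not\prec_M P$ produces a net $(u_i) \subset \cU(Q_1)$ with $\|E_P(x u_i y)\|_2 \to 0$ for all $x, y \in M$. A standard selection argument then yields a single diffuse abelian subalgebra $A \subset Q_1$ inheriting the non-intertwining property $A \not\prec_M P$. Being abelian, $A$ is amenable, hence in particular amenable relative to $P$.

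With $A$ at hand, the $\Crss$ hypothesis applied to $A \subset pMp$ inside $M = P \rtimes \Gamma$ yields a dichotomy: either $A \prec_M P$, excluded by construction, or $\cN_{pMp}(A)''$ is amenable relative to $P$. Hence the second alternative holds. Since $Q_2$ commutes with $Q_1 \supset A$, we have $Q_2 \subset A' \cap pMp$, and any unitary commuting with $A$ normalizes it, so $A' \cap pMp \subset \cN_{pMp}(A)''$. Consequently $Q_2 \subset \cN_{pMp}(A)''$, and since relative amenability passes to subalgebras, $Q_2$ is amenable relative to $P$, completing the proof.

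The main obstacle is the first step: extracting a diffuse abelian subalgebra of $Q_1$ that preserves the non-intertwining property. This is a technical but standard device in Popa's deformation/rigidity framework, typically argued by picking within the net $(u_i)$ a unitary whose spectral measure is diffuse and whose powers still annihilate $E_P$ in the required sense. Once $A$ is secured, the remainder of the argument is a direct application of $\Crss$ together with the inclusion of the commutant into the double commutant of the normalizer.
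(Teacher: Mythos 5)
Your proposal is correct and follows essentially the same route as the paper: assume $Q_1 \not\prec_M P$, extract a diffuse abelian $A \subset Q_1$ with $A \not\prec_M P$, apply the defining property of $\Crss$, and conclude from $Q_2 \subset \cN_{pMp}(A)''$. The only difference is cosmetic: the extraction step you sketch by hand is exactly \cite[Corollary F.14]{BO08}, which the paper simply cites.
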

\begin{proof}
Assume that $Q_1 \not\prec_M P$. By \cite[Corollary F.14]{BO08}, there exists a diffuse abelian von Neumann subalgebra $A\subset Q_1$ such that $A\nprec_M P$. Because $\Gamma \in \Crss$, we get that $\mathcal{N}_{pMp}(A)''$ is amenable relative to $P$. Since $Q_2 \subset \mathcal{N}_{pMp}(A)''$, also $Q_2$ is amenable relative to $P$.
\end{proof}

It also follows that for groups $\Gamma$ in $\Crss$, the centralizer $C_\Gamma(L)$ of an infinite subgroup $L < \Gamma$ is amenable. So, torsion free groups $\Gamma$ in $\Crss$ have the property that $C_\Gamma(g)$ is amenable for every $g \neq e$. As a consequence, torsion free groups $\Gamma$ in $\Crss$ are icc and even have the property that every nonamenable subgroup $L < \Gamma$ is \emph{relatively icc} in the sense that $\{h g h^{-1} \mid h \in L\}$ is an infinite set for every $g \in \Gamma$, $g \neq e$. Finally note that torsion free groups $\Gamma$ in $\Crss$ have no nontrivial amenable normal subgroups. In particular, every nontrivial normal subgroup of $\Gamma$ is relatively icc.

In the rest of this section, we prove Theorem \ref{main-SV}. So, we fix a group $\Gamma$ and an action $\Gamma \times \Gamma \actson A$ as in the formulation of the theorem. We put $M = A \rtimes (\Gamma \times \Gamma)$.

\begin{lemma}\label{lem1-SV}
Let $(N,\tau)$ be a tracial factor and let $Q_1,Q_2 \subset N \ovt M$ be commuting von Neumann subalgebras that are strongly nonamenable relative to $N \ot 1$. Then $Q_1 \vee Q_2$ can be unitarily conjugated into $N \ovt L(\Gamma \times \Gamma)$.
\end{lemma}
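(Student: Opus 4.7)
The plan is to apply Theorem \ref{thm4-5} to $P=Q_1$ inside $N\ovt M$, with index set $I=\Gamma$ and the action $\Gamma\times\Gamma\actson\Gamma$ given by $(g,h)\cdot k=gkh^{-1}$. Two prerequisites have to be verified. First, for $i\neq j$ the pair-stabilizer $\stab\{i,j\}$ is, up to finite index, $C_\Gamma(i^{-1}j)$, which is amenable because torsion-free groups in $\Crss$ have amenable centralizers of every nontrivial element. Second, $A=A_0^\Gamma$ is amenable, so $N\ovt A$ is amenable relative to $N\ot 1$; hence by the composition property of relative amenability, strong nonamenability of $Q_i$ relative to $N\ot 1$ upgrades to strong nonamenability relative to $N\ovt A$. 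Since $Q_2\subset Q_1'\cap(N\ovt M)$, Theorem \ref{thm4-5} then gives one of the following: (a) $Q_1'\cap(N\ovt M)\prec N\ovt(A\rtimes\stab i)$ for some $i\in\Gamma$; or (b) there is a unitary $v\in\cU(N\ovt M)$ with $v^*(Q_1'\cap(N\ovt M))v\subset N\ovt L(\Gamma\times\Gamma)$.

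I would rule out alternative (a) by a commutant-in-crossed-product argument. If (a) holds then $Q_2\prec N\ovt(A\rtimes\stab i)$, and Proposition \ref{prop-SV-rel-amen-intertw}\ref{one-rel-amen} supplies a partial isometry $v\in q(N\ovt M)p$ and a $*$-homomorphism $\theta\colon qQ_2q\recht p(N\ovt(A\rtimes\stab i))p$ with $xv=v\theta(x)$ and $\theta(qQ_2q)$ nonamenable relative to $N\ovt A$. A key observation is that $\theta(qQ_2q)\nprec_{N\ovt(A\rtimes\stab i)}N\ovt(A\rtimes\stab\{i,j\})$ for every $j\neq i$: otherwise composition would yield $Q_2\prec_{N\ovt M}N\ovt(A\rtimes\stab\{i,j\})$, which is forbidden by Proposition \ref{prop-SV-rel-amen-intertw}\ref{two-rel-amen} because this target is amenable relative to $N\ot 1$. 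Lemma \ref{lem7}(2) therefore places the quasi-normalizer of $\theta(qQ_2q)$ inside $p(N\ovt M)p$ into $p(N\ovt(A\rtimes\stab i))p$. Using that $q\in Q_2$ commutes with $Q_1$, one transports a corner of $Q_1$ through $v$ into this quasi-normalizer, producing a subalgebra commuting with $\theta(qQ_2q)$ inside the crossed product $(N\ovt A)\rtimes\stab i$. Since $\stab i\cong\Gamma\in\Crss$, Lemma \ref{lem1} applied to this crossed product, combined with $\theta(qQ_2q)\nprec N\ovt A$ (Proposition \ref{prop-SV-rel-amen-intertw}\ref{two-rel-amen}), forces the transported corner of $Q_1$ to be amenable relative to $N\ovt A$. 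Pulling this back through $v$ yields an amenable (relative to $N\ovt A$) corner of $Q_1$, contradicting strong nonamenability of $Q_1$.

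Hence alternative (b) holds, and in particular $v^*Q_2v\subset N\ovt L(\Gamma\times\Gamma)$. To also place $v^*Q_1v$ into $N\ovt L(\Gamma\times\Gamma)$, I would verify that $v^*Q_2v\nprec_{N\ovt L(\Gamma\times\Gamma)}N\ovt L(\stab i)$ for every $i\in\Gamma$: if such an intertwining existed, then $Q_2\prec_{N\ovt M}N\ovt(A\rtimes\stab i)$ (because $L(\stab i)\subset A\rtimes\stab i$ and unitary conjugation preserves intertwining), and the contradiction of the previous paragraph applies verbatim. Lemma \ref{lem7}(1) then places the quasi-normalizer of $v^*Q_2v$ inside $N\ovt M$ into $N\ovt L(\Gamma\times\Gamma)$. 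Since $v^*Q_1v$ commutes with $v^*Q_2v$ it lies in this quasi-normalizer, and hence $v^*(Q_1\vee Q_2)v\subset N\ovt L(\Gamma\times\Gamma)$, which is the desired conclusion. The main technical subtlety, to be handled carefully in a full write-up, is the corner bookkeeping in the middle paragraph: one must choose the support projection $vv^*$ so that a nonzero corner of $Q_1$ does commute with it, transport this corner to an algebra that genuinely sits inside the smaller algebra $p(N\ovt(A\rtimes\stab i))p$ and commutes there with $\theta(qQ_2q)$, and then pull the amenability statement back to a corner of $Q_1$ cut by a projection in $Q_1'\cap(N\ovt M)$.
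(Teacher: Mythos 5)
Your overall architecture is the same as the paper's: upgrade strong nonamenability from $N \ot 1$ to $N \ovt A$, apply Theorem \ref{thm4-5} with $P=Q_1$, rule out the alternative $Q_1'\cap(N\ovt M)\prec N\ovt(A\rtimes\Stab i)$ via Proposition \ref{prop-SV-rel-amen-intertw}, Lemma \ref{lem7} and Lemma \ref{lem1}, and finish with Lemma \ref{lem7}(1) to pull $v^*Q_1v$ into $N\ovt L(\Gamma\times\Gamma)$. The first and last paragraphs are fine. The genuine gap is exactly the point you flag in the middle: your contradiction requires transporting a corner of $Q_1$ through $v$ and pulling relative amenability back, and for either direction of that transport you need the left support $e=vv^*$ of the intertwiner to commute with a (corner of) $Q_1$ — otherwise $y\mapsto v^*yv$ is not multiplicative on $Q_1q$ and there is no relation $yv=v\rho(y)$ with which to run the functional-pullback argument of Proposition \ref{prop-SV-rel-amen-intertw}\ref{one-rel-amen}. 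Your proposed remedy, ``choose the support projection $vv^*$ so that a nonzero corner of $Q_1$ does commute with it,'' is not available: $v$ and hence $vv^*$ are produced by Popa's intertwining theorem applied to $Q_2$, the projection $vv^*$ only lies in $q(Q_2'\cap(N\ovt M))q$, and nothing forces any corner of $Q_1$ to commute with it. Also, your citation of Proposition \ref{prop-SV-rel-amen-intertw}\ref{two-rel-amen} for $\theta(qQ_2q)\not\prec N\ovt A$ is loose (the proposition applies to $Q_2$, not to $\theta(qQ_2q)$; one must first compose the intertwining back through $v$), though this is repairable.

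The paper avoids the problem by transporting in the opposite direction and by working with intertwining rather than amenability: set $P:=\theta(qQ_2q)'\cap p(N\ovt M)p$, which by Lemma \ref{lem7}(2) sits inside $p(N\ovt(A\rtimes\Stab i))p$; apply Lemma \ref{lem1} to the commuting pair $(P,\theta(qQ_2q))$ and use that $\theta(qQ_2q)$ is nonamenable relative to $N\ovt A$ (this is exactly what Proposition \ref{prop-SV-rel-amen-intertw}\ref{one-rel-amen} delivers) to conclude $P\prec N\ovt A$. Since $v^*\bigl(e(Q_2'\cap(N\ovt M))e\bigr)v\subset P$ with $e=vv^*$, this gives $e(Q_2'\cap(N\ovt M))e\prec N\ovt A$, hence $Q_2'\cap(N\ovt M)\prec N\ovt A$ and finally $Q_1\prec N\ovt A$, because intertwining passes from a corner to the whole algebra and then to any unital subalgebra — no commutation with $vv^*$ is ever needed. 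The contradiction is then with Proposition \ref{prop-SV-rel-amen-intertw}\ref{two-rel-amen} applied to $Q_1$. (If you insist on an amenability-based contradiction, the workable version is to transport $e(Q_2'\cap(N\ovt M))e$ — not a corner of $Q_1$ — forward, pull its relative amenability back through $v$, and then cut $Q_1$ by the central support of $e$ in $Q_2'\cap(N\ovt M)$, which does lie in $Q_1'\cap(N\ovt M)$; but some such rearrangement is needed, and as written your argument does not close.)
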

\begin{proof}
Since $A$ is amenable, we get that $Q_1$ and $Q_2$ are strongly nonamenable relative to $N \ovt (A \rtimes L)$ whenever $L < \Gamma \times \Gamma$ is an amenable subgroup. For every $g \in \Gamma$, we denote by $\Stab g \subset \Gamma \times \Gamma$ the stabilizer of $g$ under the left-right translation action $\Gamma \times \Gamma \actson \Gamma$. We also write $\Stab \{g,h\} = \Stab g \cap \Stab h$.

We start by proving that $Q_2 \not\prec N \ovt (A \rtimes \Stab g)$ for all $g \in \Gamma$. Assume the contrary. Whenever $h \neq g$, the group $\Stab \{g,h\}$ is amenable so that by Proposition \ref{prop-SV-rel-amen-intertw}, $Q_2 \not\prec N \ovt (A \rtimes \Stab \{g,h\})$. Also by Proposition \ref{prop-SV-rel-amen-intertw}, we can take projections $q \in Q_2$ and $p \in N \ovt (A \rtimes \Stab g)$, a nonzero partial isometry $v \in q (N \ovt M) p$ and a normal unital $*$-homomorphism
$$\theta : q Q_2 q \recht p(N \ovt (A \rtimes \Stab g))p$$
such that $x v = v \theta(x)$ for all $x \in qQ_2 q$ and such that, inside $N \ovt (A \rtimes \Stab g)$, we have that $\theta(qQ_2q)$ is nonamenable relative to $N \ovt A$ and we have that $\theta(qQ_2q) \not\prec N \ovt (A \rtimes \Stab \{g,h\})$ whenever $h \neq g$.

Write $P := \theta(q Q_2 q)' \cap p (N \ovt M) p$. By Lemma \ref{lem7}, $P \subset p (N \ovt (A \rtimes \Stab g)) p$. In particular, $v^* v \in p (N \ovt (A \rtimes \Stab g)) p$ and we may assume that $v^* v = p$. Since $\Stab g \cong \Gamma$, we have $\Stab g \in \Crss$ and Lemma \ref{lem1} implies that $P \prec N \ovt A$. Conjugating with $v$ and writing $e = vv^* \in (Q_2' \cap (N \ovt M))q$, we find that $e(Q_2' \cap (N \ovt M))e \prec N \ovt A$. Since $Q_1 \subset Q_2' \cap (N \ovt M)$, it follows that $Q_1 \prec N \ovt A$. By Proposition \ref{prop-SV-rel-amen-intertw}, this contradicts the strong nonamenability of $Q_1$ relative to $N \ovt A$. So, we have proved that $Q_2 \not\prec N \ovt (A \rtimes \Stab g)$ for all $g \in \Gamma$.

Since $Q_1$ is strongly nonamenable relative to $N \ovt A$ and since $Q_2 \not\prec N \ovt (A \rtimes \Stab g)$ for all $g \in \Gamma$, it follows from Theorem \ref{thm4-5} that $v^* Q_2 v \subset N \ovt L(\Gamma \times \Gamma)$ for some unitary $v \in N \ovt M$. Since $v^* Q_2 v \not\prec N \ovt L(\Stab g)$ for all $g \in \Gamma$, it follows from Lemma \ref{lem7} that also $v^* Q_1 v \subset N \ovt L(\Gamma \times \Gamma)$. This concludes the proof of the lemma.
\end{proof}

We now also fix a gms decomposition $M = B \rtimes \Lambda$. We view $\Lambda$ as a subgroup of $\cU(M)$. We denote by $\Delta : M \recht M \ovt M$ the associated dual coaction given by $\Delta(b) = b \ot 1$ for all $b \in B$ and $\Delta(v) = v \ot v$ for all $v \in \Lambda$.

\begin{lemma}\label{lem2-SV}
Writing $Q_1 = L(\Gamma \times \{e\})$ and $Q_2 = L(\{e\} \times \Gamma)$, we have $\Delta(Q_2)\prec_{M\tensor M} M\tensor Q_i$ for either $i=1$ or $i=2$.
\end{lemma}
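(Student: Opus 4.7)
Both $\Delta(Q_1)$ and $\Delta(Q_2)$ clearly commute, and by Proposition \ref{prop4}---applied with the gms decomposition $M = B \rtimes \Lambda$, where $B$ is abelian hence amenable, and noting that each $Q_i \cong L(\Gamma)$ is a nonamenable factor since the torsion-free group $\Gamma \in \Crss$ is icc and nonamenable---they are strongly nonamenable relative to $M \ot 1$ inside $M \ovt M$. Lemma \ref{lem1-SV} applied with $N = M$ therefore furnishes a unitary $u \in \cU(M \ovt M)$ such that both $u^* \Delta(Q_1) u$ and $u^* \Delta(Q_2) u$ lie in $M \ovt L(\Gamma \times \Gamma)$.

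Now view $M \ovt L(\Gamma \times \Gamma)$ as the (trivial-action) crossed product $(M \ovt L(\{e\} \times \Gamma)) \rtimes (\Gamma \times \{e\})$, noting $\Gamma \times \{e\} \cong \Gamma \in \Crss$. Lemma \ref{lem1} applied to the commuting pair $u^* \Delta(Q_2) u$ and $u^* \Delta(Q_1) u$ yields either $u^* \Delta(Q_2) u \prec_{M \ovt L(\Gamma \times \Gamma)} M \ovt L(\{e\} \times \Gamma) = M \ovt Q_2$ (giving the lemma with $i = 2$), or $u^* \Delta(Q_1) u$ is amenable relative to $M \ovt Q_2$. Reversing the roles of the two tensor factors of $L(\Gamma \times \Gamma)$ and repeating the same argument gives either $u^* \Delta(Q_2) u \prec M \ovt Q_1$ (giving the lemma with $i = 1$), or $u^* \Delta(Q_1) u$ is amenable relative to $M \ovt Q_1$.

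If neither intertwining conclusion holds, then $u^* \Delta(Q_1) u$ is simultaneously amenable relative to $M \ovt Q_1$ and to $M \ovt Q_2$ inside $M \ovt L(\Gamma \times \Gamma) = M \ovt L(\Gamma) \ovt L(\Gamma)$. I expect the main obstacle to be combining these two relative amenabilities into amenability of $u^* \Delta(Q_1) u$ relative to the common base $M \ot 1 \ot 1$ inside $M \ovt M$; this requires a careful bimodule/basic-construction argument at the level of $\langle M \ovt L(\Gamma \times \Gamma), e_{M \ovt Q_j}\rangle$, exploiting the product decomposition $L^2(L(\Gamma \times \Gamma)) = L^2(L(\Gamma)) \ot L^2(L(\Gamma))$ in the spirit of the proof of Proposition \ref{prop4}. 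Once established, the resulting amenability relative to $M \ot 1$ contradicts the strong nonamenability from the first paragraph, ruling out the bad alternative and completing the proof.
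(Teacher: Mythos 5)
Your route is the same as the paper's: Proposition \ref{prop4} plus Lemma \ref{lem1-SV} (with $N=M$) to conjugate $\Delta(Q_1\vee Q_2)$ into $M\ovt L(\Gamma\times\Gamma)$, then Lemma \ref{lem1} applied to the two crossed product decompositions of $M\ovt L(\Gamma\times\Gamma)$, leaving as the only remaining case that $u^*\Delta(Q_1)u$ is amenable relative to both $M\ovt Q_1$ and $M\ovt Q_2$. Up to that point the argument is correct and matches the paper.

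The gap is exactly the step you defer to the end: the claim that amenability of $u^*\Delta(Q_1)u$ relative to both $M\ovt Q_1$ and $M\ovt Q_2$ yields amenability relative to $M\ot 1$. This is not a routine computation ``in the spirit of Proposition \ref{prop4}'': in general it is not even known whether relative amenability with respect to two subalgebras passes to their intersection, and the proof of Proposition \ref{prop4} -- a weak containment argument exploiting the amenability of $B$ -- gives no handle on this question. What makes it work here is the special position of the two subalgebras: $M\ovt L(\Gamma)\ot 1$ and $M\ot 1\ovt L(\Gamma)$ form a commuting square inside $M\ovt L(\Gamma\times\Gamma)$ with each of them regular, so that \cite[Proposition 2.7]{PV11} applies and gives amenability relative to their intersection $M\ot 1$, contradicting the strong nonamenability from Proposition \ref{prop4}. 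This citation is precisely the ingredient the paper invokes at this point; with it (or a reproof of it) inserted in place of your sketched basic-construction argument, your proof is complete and coincides with the paper's.
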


\begin{proof}
By Proposition \ref{prop4}, $\Delta(Q_1)$ and $\Delta(Q_2)$ are strongly nonamenable relative to $M \ot 1$. So by Lemma \ref{lem1-SV}, we can take a unitary $v \in M \ovt M$ such that
$$v^\ast\Delta(Q_1 \vee Q_2)v\subset M\tensor L(\Gamma\times\Gamma) \; .$$
We therefore have the two commuting subalgebras $v^\ast\Delta(Q_1)v$ and $v^\ast\Delta(Q_2)v$ inside $M\tensor L(\Gamma\times\Gamma)$. If $v^\ast\Delta(Q_1)v$ was amenable relative to both $M\tensor Q_1$ and $M\tensor Q_2$, then it would be amenable relative to $M\tensor 1$ by \cite[Proposition 2.7]{PV11}, which is not the case. Hence $v^\ast\Delta(Q_1)v$ is nonamenable relative to either $M\tensor Q_1$ or $M\tensor Q_2$. Assuming that $v^\ast\Delta(Q_1)v$ is nonamenable relative to $M\tensor Q_1$, Lemma \ref{lem1} implies that $\Delta(Q_2)\prec M\tensor Q_1$.
\end{proof}

In the following three lemmas, we prove that $\Lambda$ contains two commuting nonamenable subgroups $\Lambda_1,\Lambda_2 < \Lambda$. The method to produce such commuting subgroups is taken from \cite{Io11} and our proofs of Lemmas \ref{lem3-SV}, \ref{lem4-SV} and \ref{lem5-SV} are very similar to the proof of \cite[Theorem 3.1]{Io11}. The same method was also used in \cite[Theorem 3.3]{CdSS15}. For completeness, we provide all details.

Combining Theorem \ref{thm1a} and Lemma \ref{lem2-SV}, we get the following.

\begin{lemma}\label{lem3-SV}
Denote by $\mathcal{G}$ the family of all amenable subgroups of $\Lambda$. For either $i=1$ or $i=2$, there exists a finite set $x_1,\ldots,x_n\in M$ and a $\delta>0$ such that the following holds: whenever $F\subset\Lambda$ is small relative to $\mathcal{G}$, we can find an element $v_F \in\Lambda-F$ such that
$$\sum_{k,j=1}^n \|E_{Q_i}(x_k v_F x_j^\ast)\|_2^2\geq\delta \; . $$
\end{lemma}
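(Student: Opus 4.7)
The plan is to verify the two hypotheses of Theorem \ref{thm1a} applied with $P = Q_2$, with $Q = Q_i$ where $i \in \{1,2\}$ is chosen via Lemma \ref{lem2-SV}, and with $\mathcal{G}$ equal to the family of amenable subgroups of $\Lambda$. Hypothesis (1), namely $\Delta(Q_2) \prec_{M \ovt M} M \ovt Q_i$, is precisely the content of Lemma \ref{lem2-SV}. The only thing to check is hypothesis (2): $Q_2 \nprec_M B \rtimes \Sigma$ for every amenable subgroup $\Sigma < \Lambda$. Once established, Theorem \ref{thm1a} directly delivers the finite set $x_1, \ldots, x_n \in M$ and, for every $F \subset \Lambda$ small relative to $\mathcal{G}$, the element $v_F \in \Lambda - F$ satisfying the required inequality.

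To verify the nonintertwining statement, fix an amenable subgroup $\Sigma < \Lambda$. Since $B$ is abelian and $\Sigma$ is amenable, $B \rtimes \Sigma$ is amenable as a von Neumann algebra. On the other hand, since $\Gamma$ is torsion free and belongs to $\Crss$, it is icc and nonamenable (as noted in the remarks following Lemma \ref{lem1}), so $Q_2 = L(\{e\} \times \Gamma) \cong L(\Gamma)$ is a nonamenable II$_1$ factor. The main (minor) obstacle is to upgrade this to \emph{strong} nonamenability of $Q_2$ relative to $B \rtimes \Sigma$ inside $M$, which is what is needed to invoke Proposition \ref{prop-SV-rel-amen-intertw}\ref{two-rel-amen}.

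For strong nonamenability, let $q \in Q_2' \cap M$ be a nonzero projection. Since $q$ commutes with $Q_2$, the map $x \mapsto xq$ is a normal $*$-homomorphism on $Q_2$ whose kernel is a two-sided ideal of the factor $Q_2$, hence trivial. Therefore $Q_2 q$ is isomorphic to $Q_2$, and in particular is nonamenable. If $Q_2 q$ were amenable relative to $B \rtimes \Sigma$ inside $M$, then by transitivity of relative amenability together with the amenability of $B \rtimes \Sigma$, $Q_2 q$ would itself be amenable, a contradiction. Hence $Q_2$ is strongly nonamenable relative to $B \rtimes \Sigma$, and Proposition \ref{prop-SV-rel-amen-intertw}\ref{two-rel-amen} yields $Q_2 \nprec_M B \rtimes \Sigma$. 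With both hypotheses verified, Theorem \ref{thm1a} completes the proof.
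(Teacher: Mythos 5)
Your proposal is correct and follows essentially the same route as the paper, which obtains the lemma precisely by combining Theorem \ref{thm1a} (with $P=Q_2$, $Q=Q_i$) and Lemma \ref{lem2-SV}; the paper leaves the verification of hypothesis (2) implicit, and your argument for it — $B\rtimes\Sigma$ is amenable while $Q_2\cong L(\Gamma)$ is a nonamenable factor, so $Q_2$ is strongly nonamenable relative to $B\rtimes\Sigma$ and Proposition \ref{prop-SV-rel-amen-intertw}\ref{two-rel-amen} gives $Q_2\nprec_M B\rtimes\Sigma$ — is a valid way to fill it in.
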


\begin{lemma}\label{lem4-SV}
There exists a decreasing sequence of nonamenable subgroups $\Lambda_n<\Lambda$ such that $Q_i\prec_M B\rtimes(\bigcup_n C_\Lambda(\Lambda_n))$ for either $i=1$ or $i=2$, where $C_\Lambda(\Lambda_n)$ denotes the centralizer of $\Lambda_n$ inside $\Lambda$.
\end{lemma}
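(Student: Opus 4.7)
Fix the index $i\in\{1,2\}$, the finite set $x_1,\ldots,x_n\in M$ and the constant $\delta>0$ provided by Lemma \ref{lem3-SV}, and set
$$S := \Bigl\{v\in\Lambda \,:\, \sum_{k,j=1}^n \|E_{Q_i}(x_k v x_j^\ast)\|_2^2 \geq \delta\Bigr\}.$$
By Lemma \ref{lem3-SV}, $S$ meets the complement of every subset $F\subset\Lambda$ that is small relative to the family $\mathcal{G}$ of amenable subgroups of $\Lambda$.

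The plan, following the iterative construction in \cite[proof of Theorem 3.1]{Io11}, is to build a decreasing chain $\Lambda_1\supseteq\Lambda_2\supseteq\cdots$ of nonamenable subgroups of $\Lambda$ as follows. Write $\Sigma_n:=\bigcup_{k\leq n}C_\Lambda(\Lambda_k)$. At stage $n+1$ we distinguish two cases. If $\Sigma_n$ is not small relative to $\mathcal{G}$, then since it is a finite union of subgroups some individual centralizer $C_\Lambda(\Lambda_k)$ must itself fail to be small, and a direct comparison of the uniform mass estimate from Lemma \ref{lem3-SV} with Popa's intertwining criterion yields $Q_i\prec_M B\rtimes C_\Lambda(\Lambda_k)$, terminating the procedure. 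Otherwise $\Sigma_n$ is small relative to $\mathcal{G}$, Lemma \ref{lem3-SV} applied with $F=\Sigma_n$ produces an element $v_{n+1}\in S\setminus\Sigma_n$, and one selects $\Lambda_{n+1}\subseteq\Lambda_n$ to be a nonamenable subgroup centralizing $v_{n+1}$.

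The main obstacle is verifying at each stage that such a nonamenable $\Lambda_{n+1}$ exists, i.e.\ that $C_{\Lambda_n}(v_{n+1})$ contains a nonamenable subgroup. This step is expected to rely on the hypothesis $\Gamma\in\Crss$ through Lemma \ref{lem1}: were $C_{\Lambda_n}(v_{n+1})$ amenable, then the commuting pair $(L(\Lambda_n),W^\ast(v_{n+1}))$ inside $M$, combined with the mass lower bound $v_{n+1}\in S$ and the strong nonamenability of $Q_i$ relative to $A$ (Proposition \ref{prop4}), would — after the appropriate intertwining reduction — already force an intertwining of $Q_i$ into $B\rtimes\Sigma$ for some $\Sigma\in\mathcal{G}$, contradicting the output of Lemma \ref{lem3-SV} (or else the procedure would have terminated earlier).

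Finally, once the sequence $\{\Lambda_n\}$ is in place, the conclusion $Q_i\prec_M B\rtimes\bigcup_n C_\Lambda(\Lambda_n)$ follows by translating the uniform estimate $\sum_{k,j}\|E_{Q_i}(x_k v x_j^\ast)\|_2^2\geq\delta$ for $v\in\bigcup_n C_\Lambda(\Lambda_n)$ into Popa's intertwining criterion for unitaries in $Q_i$ via a standard biorthogonality / Hilbert-module duality computation, using the $x_k$'s as the witnessing finite family.
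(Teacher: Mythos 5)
Your outline leaves as ``heuristics'' exactly the two load-bearing steps, and neither of them survives scrutiny. First, there is no reason why a single element $v_{n+1}\in S\setminus\Sigma_n$ should be centralized by a nonamenable subgroup of $\Lambda_n$: the hypothesis $\Gamma\in\Crss$ constrains $\Gamma$, not the unknown group $\Lambda$, and membership in $S$ is a statement about the conditional expectations $E_{Q_i}(x_k v x_j^\ast)$ which carries no structural information about $C_{\Lambda_n}(v_{n+1})$; the sketched contradiction (``would force $Q_i\prec B\rtimes\Sigma$ for some amenable $\Sigma$'') is not an argument. Second, the concluding step is also invalid: the estimate $\sum_{k,j}\|E_{Q_i}(x_k v x_j^\ast)\|_2^2\geq\delta$ is only known for the particular elements you selected, not for all $v\in\bigcup_n C_\Lambda(\Lambda_n)$, and even if it held for all such $v$ it is not the hypothesis of Popa's criterion, which requires a uniform lower bound on $\|E_{B\rtimes\Sigma'}(x u y)\|_2$ over \emph{unitaries} $u\in\cU(Q_i)$; no ``biorthogonality/duality'' converts the group-element estimate into that, and the same objection applies to the ``direct comparison'' you invoke in the case where $\Sigma_n$ is not small.

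The paper's proof takes a genuinely different route that bypasses both problems. It directs the set $I$ of subsets of $\Lambda$ that are small relative to $\mathcal{G}$, fixes a cofinal ultrafilter $\mathcal{U}$ on $I$, and packages the elements of Lemma \ref{lem3-SV} into a single element $v=(v_F)_{F\in I}$ of $\Lambda^{\mathcal{U}}\subset\cU(M^{\mathcal{U}})$. With $\Sigma:=\Lambda\cap v\Lambda v^{-1}$, one proves $Q_2\prec_M B\rtimes\Sigma$ (note: the index \emph{opposite} to the one appearing in the mass estimate, taken to be $i=1$) by a commutation argument: the vector $\xi=P_K\bigl(E_{Q_1^{\mathcal{U}}}(x v y^\ast)\bigr)$, where $K$ is the closed span of $MvM$ in $L^2(M^{\mathcal{U}})$, is nonzero and satisfies $a\xi=\xi a$ for all $a\in Q_2$ because $Q_1$ and $Q_2$ commute, whereas $Q_2\not\prec B\rtimes\Sigma$ would yield unitaries $a_n\in\cU(Q_2)$ with $\langle a_n\xi a_n^\ast,\xi\rangle\to0$, using the identity $E_M(v^\ast z v)=E_M(v^\ast E_{B\rtimes\Sigma}(z)v)$. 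Finally, $\Sigma$ is written as an increasing union of centralizers $C_\Lambda(\Lambda_{\mathcal{T}_n})$, where $\Lambda_{\mathcal{T}}$ is the subgroup generated by the differences $v_F v_{F'}^{-1}$ with $F,F'\in\mathcal{T}$, and the nonamenability of these groups comes from a cofinality argument with $\mathcal{U}$: if $\Lambda_{\mathcal{T}}$ were amenable, the set $\{v_F\mid F\in\mathcal{T}\}$ would be small relative to $\mathcal{G}$, contradicting $v_F\in\Lambda-F$. So the decreasing nonamenable subgroups are produced by the ultrafilter limit rather than by centralizers of individual elements of $S$, and the commuting partner $Q_{3-i}$ --- which your outline never really uses --- is what actually gets intertwined.
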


\begin{proof}
As in Lemma \ref{lem3-SV}, we let $\mathcal{G}$ denote the family of all amenable subgroups of $\Lambda$. We denote by $I$ the set of subsets of $\Lambda$ that are small relative to $\mathcal{G}$. We order $I$ by inclusion and choose a cofinal ultrafilter $\mathcal{U}$ on $I$. Consider the ultrapower von Neumann algebra $M^{\mathcal{U}}$ and the ultrapower group $\Lambda^{\mathcal{U}}$. Every $v=(v_F)_{F\in I}\in\Lambda^{\mathcal{U}}$ can be viewed as a unitary in $M^{\mathcal{U}}$.

Assume without loss of generality that $i=1$ in Lemma \ref{lem3-SV} and denote by $v=(v_F)_{F\in I}$ the element of $\Lambda^{\mathcal{U}}$ that we found in Lemma \ref{lem3-SV}. Denote by $K\subset L^2(M^{\mathcal{U}})$ the closed linear span of $MvM$ and by $P_K$ the orthogonal projection from $L^2(M^{\mathcal{U}})$ onto $K$. Put $\Sigma=\Lambda\cap v\Lambda v^{-1}$. We claim that $Q_2\prec_M B\rtimes\Sigma$.

Assume the contrary. This means that we can find a sequence of unitaries $a_n\in \mathcal{U}(Q_2)$ such that $\|E_{B\rtimes\Sigma}(xa_ny)\|_2\to0$ for any $x,y\in M$. We prove that $\langle a_n\xi a_n^\ast,\eta\rangle\to0$ as $n\to\infty$ for all $\xi,\eta\in K$. For this, it suffices to prove that $\langle a_n xv x'a_n^\ast,yv y'\rangle\to0$ for all $x,x',y,y'\in M$. First, note that for all $z\in M$, we have $E_M(v^\ast zv )=E_M(v^\ast E_{B\rtimes\Sigma}(z)v)$ by definition of the subgroup $\Sigma$. Hence
\begin{align*}
\langle a_n xvx'a_n^\ast,yv y'\rangle &= \tau(E_M(v^\ast y^\ast a_nxv)x'a_n^\ast y'^\ast) \\
&= \tau(E_M(v^\ast E_{B\rtimes\Sigma}(y^\ast a_nx)v)x'a_n^\ast y'^\ast)\\
&\leq\|x'\|\|y'\|\|E_{B\rtimes\Sigma}(y^\ast a_nx)\|_2\to0
\end{align*}
as wanted.

Next, Lemma \ref{lem3-SV} provides a finite set $L\subset M$ such that $\sum_{x,y\in L}\|E_{Q_1^{\mathcal{U}}}(xvy^\ast)\|^2_2\neq0$. In particular, we can take $x,y\in L$ such that $E_{Q_1^{\mathcal{U}}}(xvy^\ast)\neq0$. Put $\xi=P_K(E_{Q_1^{\mathcal{U}}}(xvy^\ast))$. We claim that $\xi\neq0$. Since $E_{Q_1^{\mathcal{U}}}(xvy^\ast)\neq0$, we get that $\|xv y^\ast-E_{Q_1^{\mathcal{U}}}(xv y^\ast)\|_2<\|xv y^\ast\|_2$. Since $xv y^\ast\in K$, it follows that $\|xvy^\ast-\xi\|_2=\|P_K(xvy^\ast-E_{Q_1^{\mathcal{U}}}(xvy^\ast))\|_2<\|xvy^\ast\|_2$. Hence $\xi\neq0$.

Since $K$ is an $M$-$M$-bimodule and since $Q_1$ commutes with $Q_2$, we have that $a\xi=\xi a$ for all $a\in Q_2$. In particular, $\langle a_n\xi a_n^\ast,\xi\rangle=\|\xi\|_2^2>0$ in contradiction with the fact that $\langle a_n\xi a_n^\ast,\xi\rangle\to0$. This proves that $Q_2\prec_M B\rtimes\Sigma$.

It remains to show that there exists a decreasing sequence of subgroups $\Lambda_n<\Lambda$ such that for all $n$ we have $\Lambda_n\notin\mathcal{G}$, and such that $\Sigma=\bigcup_n C_\Lambda(\Lambda_n)$. For every $\mathcal{T}\subset I$, we denote by $\Lambda_\mathcal{T}$ the subgroup of $\Lambda$ generated by $\{v_Fv_{F'}^{-1}\mid F,F'\in \mathcal{T}\}$. An element $w\in\Lambda$ belongs to $\Sigma$ if and only if there exists a $\mathcal{T}\in\mathcal{U}$ such that $w$ commutes with $\Lambda_\mathcal{T}$. Enumerating $\Sigma=\{w_1,w_2,\ldots\}$, choose $\mathcal{S}_n\in\mathcal{U}$ such that $w_n$ commutes with $\Lambda_{\mathcal{S}_n}$. Then put $\mathcal{T}_n:=\mathcal{S}_1\cap\ldots\cap\mathcal{S}_n$. Note that $\mathcal{T}_n\in\mathcal{U}$ and by construction, $\Sigma=\bigcup_n C_\Lambda(\Lambda_{\mathcal{T}_n})$. It remains to prove that $\Lambda_\mathcal{T}\notin\mathcal{G}$ for all $\mathcal{T}\in\mathcal{U}$.

Fix $\mathcal{T} \in \mathcal{U}$ and assume that $\Lambda_\mathcal{T} \in \mathcal{G}$. Fix an element $F'\in\mathcal{T}$. Then $\{v_F\mid F\in\mathcal{T}\}\subset\Lambda_\mathcal{T}v_{F'}$. So, $F_1:=\{v_F\mid F\in\mathcal{T}\}$ is small relative to $\mathcal{G}$. Define $\mathcal{T}'\subset I$ by $\mathcal{T}'=\{F\in I\mid F_1\subset F\}$. Since $\mathcal{U}$ is a cofinal ultrafilter and $\mathcal{T} \in \mathcal{U}$, we get $\mathcal{T}\cap\mathcal{T}'\neq\emptyset$. So we can take $F\in\mathcal{T}$ with $F_1\subset F$. Then, $v_F\in\Lambda-F\subset\Lambda-F_1$ but also $v_F\in F_1$. This being absurd, we have shown that $\Lambda_\mathcal{T}\notin\mathcal{G}$ for all $\mathcal{T} \in \mathcal{U}$.
\end{proof}

\begin{lemma}\label{lem5-SV}
There exist two commuting nonamenable subgroups $\Lambda_1$ and $\Lambda_2$ inside $\Lambda$. Moreover, whenever $\Lambda_1,\Lambda_2 < \Lambda$ are commuting nonamenable subgroups, $L(\Lambda_1\Lambda_2)$ can be unitarily conjugated into $L(\Gamma\times\Gamma)$.
\end{lemma}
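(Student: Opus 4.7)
For the existence of commuting nonamenable subgroups, I apply Lemma \ref{lem4-SV}: up to relabeling, it yields $Q_1 = L(\Gamma\times\{e\}) \prec_M B \rtimes \Sigma$ with $\Sigma = \bigcup_n C_\Lambda(\Lambda_n)$ and each $\Lambda_n < \Lambda$ nonamenable. Since $Q_1$ is nonamenable (as $\Gamma$ is a nonelementary hyperbolic group) while $B$ is amenable, the intertwining forces $\Sigma$ itself to be nonamenable: otherwise $B \rtimes \Sigma$ would be amenable and could not accommodate a nonzero corner of $Q_1$. Amenability of discrete groups is preserved under directed unions of subgroups, so some $C_\Lambda(\Lambda_n)$ in the union must already be nonamenable, and setting $\Lambda_1 := \Lambda_n$ and $\Lambda_2 := C_\Lambda(\Lambda_n)$ gives the required pair of commuting nonamenable subgroups of $\Lambda$.

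For the second assertion, given commuting nonamenable $\Lambda_1, \Lambda_2 < \Lambda$, I will apply Lemma \ref{lem1-SV} to the commuting subalgebras $\Delta(L(\Lambda_i)) \subset M \ovt M$ with $N := M$. First I replace $L(\Lambda_i)$ by the corner complementary to its maximal central projection on which $L(\Lambda_i)$ is amenable (nonzero, because $\Lambda_i$ is nonamenable), so that $L(\Lambda_i)$ has no amenable direct summand. Proposition \ref{prop4} then guarantees that $\Delta(L(\Lambda_i))$ is strongly nonamenable relative to $M \ot 1$, and Lemma \ref{lem1-SV} delivers a unitary $v \in M \ovt M$ with $v^* \Delta(L(\Lambda_1\Lambda_2)) v \subset M \ovt L(\Gamma\times\Gamma)$.

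The principal obstacle is the descent from this $\Delta$-level conjugation in $M \ovt M$ to an honest unitary conjugation of $L(\Lambda_1\Lambda_2)$ into $L(\Gamma\times\Gamma)$ inside $M$. My plan here is to extract the Popa intertwining $\Delta(L(\Lambda_1\Lambda_2)) \prec_{M \ovt M} M \ovt L(\Gamma\times\Gamma)$ from the unitary containment and feed it into Theorem \ref{thm1a} with $P := L(\Lambda_1\Lambda_2)$, $Q := L(\Gamma\times\Gamma)$ and $\mathcal{G}$ the family of amenable subgroups of $\Lambda$; the non-intertwining hypothesis $P \not\prec_M B \rtimes \Sigma$ for amenable $\Sigma$ holds because $P$ is nonamenable while $B \rtimes \Sigma$ is amenable. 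The resulting quasi-normalizer-type information, processed along the lines of Lemma \ref{lem7}, should upgrade to the desired unitary conjugation of $L(\Lambda_1\Lambda_2)$ into $L(\Gamma\times\Gamma)$. Reconciling this final step with the initial central cutting, so that the full $L(\Lambda_1\Lambda_2)$ rather than merely a corner ends up inside $L(\Gamma\times\Gamma)$, is where I expect the main technical difficulty to lie.
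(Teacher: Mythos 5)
Your argument for the first assertion is essentially the paper's: Lemma \ref{lem4-SV} gives $Q_i \prec_M B \rtimes \bigl(\bigcup_n C_\Lambda(\Lambda_n)\bigr)$, the nonamenability of $Q_i$ (a nonamenable factor, since $\Gamma$ is icc) rules out $\bigcup_n C_\Lambda(\Lambda_n)$ being amenable, and since the union is increasing some $C_\Lambda(\Lambda_n)$ is already nonamenable; taking $\Lambda_1 = \Lambda_n$, $\Lambda_2 = C_\Lambda(\Lambda_n)$ is exactly what the paper does.

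For the second assertion there is a genuine gap, and also an unnecessary detour. The statement is about the subalgebras $L(\Lambda_1), L(\Lambda_2)$ of $M$ itself, so there is no need to pass through the coaction $\Delta$ at all: since $\Lambda_1,\Lambda_2$ are nonamenable groups, $L(\Lambda_i)q$ is nonamenable for every nonzero projection $q \in L(\Lambda_i)' \cap M$ (an $L(\Lambda_i)q$-central state on $qB(L^2(M))q$ restricting to the trace would produce an invariant mean on $\Lambda_i$), i.e.\ $L(\Lambda_1)$ and $L(\Lambda_2)$ are strongly nonamenable relative to $\C 1$. Hence Lemma \ref{lem1-SV} applied with $N = \C 1$ directly yields a unitary $v \in M$ with $v^*\bigl(L(\Lambda_1) \vee L(\Lambda_2)\bigr)v \subset L(\Gamma \times \Gamma)$, which is the paper's one-line proof. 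Your route instead applies Lemma \ref{lem1-SV} with $N = M$ to $\Delta(L(\Lambda_i))$, which only conjugates $\Delta(L(\Lambda_1\Lambda_2))$ into $M \ovt L(\Gamma \times \Gamma)$, and the proposed descent is not a proof: Theorem \ref{thm1a} does not produce any conjugation -- its output is transfer-of-rigidity data (elements $v_{s_F} \in \Lambda$ outside small sets with $\sum_{i,k}\|E_Q(x_i v_{s_F} x_k^*)\|_2^2 \geq \delta$), which in this paper is the input for constructing the commuting subgroups via the ultrapower argument of Lemma \ref{lem4-SV}, not a tool for locating $L(\Lambda_1\Lambda_2)$ inside $L(\Gamma\times\Gamma)$. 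The appeal to Lemma \ref{lem7} at the end is likewise unsubstantiated: that lemma controls quasi-normalizers of subalgebras already sitting in $N \ovt L(\Gamma)$ or $N \ovt (A_0^I \rtimes \Stab i)$, and you have not produced such a position for $L(\Lambda_1\Lambda_2)$ (or any corner of it). So the step from the $\Delta$-level containment to the desired unitary conjugacy -- which you yourself flag as the main difficulty -- is missing, and the intended tools would not supply it; the correct fix is simply the $N = \C 1$ application of Lemma \ref{lem1-SV}, which also makes the preliminary central cutting (in any case vacuous here) unnecessary.
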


\begin{proof}
From Lemma \ref{lem4-SV}, we find a decreasing sequence of nonamenable subgroups $\Lambda_n<\Lambda$ such that $Q_i\prec_M B\rtimes(\bigcup_n C_\Lambda(\Lambda_n))$ for either $i=1$ or $i=2$. Since $Q_i$ has no amenable direct summand, we get that the group $\bigcup_n C_\Lambda(\Lambda_n)$ is nonamenable. It follows that $C_\Lambda(\Lambda_n)$ is nonamenable for some $n\in\N$. Denote $\Lambda_1:=\Lambda_n$ and $\Lambda_2:=C_\Lambda(\Lambda_n)$.

When $\Lambda_1,\Lambda_2 < \Lambda$ are commuting nonamenable subgroups, it follows from Lemma \ref{lem1-SV} applied to $N=\C 1$ that $L(\Lambda_1) \vee L(\Lambda_2)$ can be unitarily conjugated into $L(\Gamma \times \Gamma)$.
\end{proof}

From now on, we fix commuting nonamenable subgroups $\Lambda_1,\Lambda_2 < \Lambda$. By Lemma \ref{lem5-SV}, after a unitary conjugacy, we may assume that $L(\Lambda_1 \Lambda_2) \subset L(\Gamma \times \Gamma)$.

\begin{lemma}\label{lem6}
If $N\subset L(\Gamma\times\Gamma)$ is an amenable von Neumann subalgebra such that the normalizer $\mathcal{N}_{L(\Gamma\times\Gamma)}(N)''$ contains $L(\Lambda_1 \Lambda_2)$, then $N$ is atomic. Also, $L(\Lambda_1 \Lambda_2)' \cap L(\Gamma \times \Gamma)$ is atomic.
\end{lemma}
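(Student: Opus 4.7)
The plan is to exploit the relative strong solidity of $\Gamma$ through the trivial-action crossed product decompositions $L(\Gamma\times\Gamma) = M_i \rtimes \Gamma$ for $i=1,2$, where $M_1 = L(\Gamma)\ot 1$ and $M_2 = 1\ot L(\Gamma)$. I will first prove the first statement and then derive the second from it.

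For the first statement, suppose for contradiction that $N$ has a nonzero diffuse direct summand; after replacing $N$ by this summand we may assume $N$ is diffuse amenable. Since $N$ is amenable it is amenable relative to each $M_i$, so the dichotomy built into $\Gamma\in\Crss$, applied inside $L(\Gamma\times\Gamma) = M_i\rtimes\Gamma$, gives for each $i\in\{1,2\}$ either $N\prec M_i$, or $\mathcal{N}_{L(\Gamma\times\Gamma)}(N)''$ is amenable relative to $M_i$. If the second alternative held for both $i$, then since $M_1\cap M_2 = \C$, the normalizer would be amenable by \cite[Proposition 2.7]{PV11}, contradicting that $L(\Lambda_1)\subset\mathcal{N}(N)''$ is nonamenable. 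Swapping the two tensor factors if needed, we may thus assume $N\prec M_1 = L(\Gamma)$.

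Popa's intertwining theorem then furnishes projections $p\in N$, $q\in M_1$, a nonzero partial isometry $v\in qL(\Gamma\times\Gamma)p$, and a normal $*$-homomorphism $\theta\colon pNp\to qL(\Gamma)q$ with $xv=v\theta(x)$ for all $x\in pNp$. The image $\theta(pNp)$ is a diffuse amenable subalgebra of $L(\Gamma)$, and since $\Gamma\in\Crss$ applied with $P=\C$ shows $L(\Gamma)$ is strongly solid, $\mathcal{N}_{L(\Gamma)}(\theta(pNp))''$ is amenable. Using Lemma \ref{lem7} (or a direct tensor-product computation), the quasi-normalizer of $\theta(pNp)$ inside $L(\Gamma\times\Gamma)$ is contained in $\mathcal{N}_{L(\Gamma)}(\theta(pNp))''\ovt L(\Gamma)$. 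Pulling back through $v$, corners of $\mathcal{N}(N)''$ embed into this ``amenable $\ovt L(\Gamma)$''. But such an algebra cannot contain two commuting nonamenable subalgebras, since one of them would then have to sit in the amenable tensor factor; this contradicts that $L(\Lambda_1),L(\Lambda_2)\subset \mathcal{N}(N)''$ commute and are nonamenable.

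For the second statement, set $R = L(\Lambda_1\Lambda_2)'\cap L(\Gamma\times\Gamma)$. Every unitary of $L(\Lambda_1\Lambda_2)$ commutes with $R$ and hence normalizes it, so $\mathcal{N}_{L(\Gamma\times\Gamma)}(R)''\supset L(\Lambda_1\Lambda_2)$. It therefore suffices to prove $R$ is amenable and then invoke the first statement. To establish amenability, apply Lemma \ref{lem1} to the commuting pairs $(L(\Lambda_j),R)$ inside $L(\Gamma\times\Gamma)=M_i\rtimes\Gamma$: for each $i,j\in\{1,2\}$, either $L(\Lambda_j)\prec M_i$ or $R$ is amenable relative to $M_i$. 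A short case analysis, using Proposition \ref{prop-SV-rel-amen-intertw} and the fact that $\Lambda_1,\Lambda_2$ are commuting nonamenable, rules out the scenario in which some $L(\Lambda_j)$ simultaneously embeds into both $M_1$ and $M_2$; hence $R$ is amenable relative to both, and therefore amenable by \cite[Proposition 2.7]{PV11}. The main obstacle will be the quasi-normalizer step in the first statement: verifying, after the cuts and partial isometry produced by $N\prec M_1$, that $\mathcal{N}(N)''$ really is forced into the ``amenable $\ovt L(\Gamma)$'' structure in a way that excludes commuting nonamenable subalgebras—this may require an additional $\mathcal{QN}$-absorption argument in the spirit of Lemma \ref{lem7} together with strong solidity of $L(\Gamma)$.
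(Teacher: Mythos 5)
Your opening moves are fine (reduction to a diffuse summand, the $\Crss$ dichotomy in $L(\Gamma\times\Gamma)=M_i\rtimes\Gamma$, and the use of \cite[Proposition 2.7]{PV11} to exclude relative amenability of the normalizer with respect to both factors), but the pivotal step --- ``pulling back through $v$, corners of $\mathcal{N}_{L(\Gamma\times\Gamma)}(N)''$ embed into (amenable)$\,\ovt L(\Gamma)$'' --- is not justified, and it is precisely the step your strategy cannot supply. The intertwining $N\prec L(\Gamma)\ot 1$ only gives $xv=v\theta(x)$ for $x\in pNp$; for a unitary $u\in\mathcal{N}_{L(\Gamma\times\Gamma)}(N)$ the element $v^*uv$ has no reason to normalize or even quasi-normalize $\theta(pNp)$, so neither the normalizer nor the quasi-normalizer of $N$ can be transported through $v$. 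Moreover the target algebra is not what your tensor-product computation would give: the quasi-normalizer of $\theta(pNp)\ot 1$ inside $L(\Gamma)\ovt L(\Gamma)$ lands in $\mathcal{QN}_{L(\Gamma)}(\theta(pNp))''\ovt L(\Gamma)$, while strong solidity of $L(\Gamma)$ (i.e.\ $\Crss$ with $P=\C 1$) controls normalizers, not quasi-normalizers. Finally, the intended contradiction ``(amenable)$\,\ovt L(\Gamma)$ cannot contain two commuting nonamenable subalgebras because one must sit in the amenable factor'' is too quick: Lemma \ref{lem1} only yields $Q_1\prec R$ with $R$ amenable, i.e.\ an amenable corner, and converting this into a genuine contradiction needs the corner bookkeeping (strong nonamenability, Proposition \ref{prop-SV-rel-amen-intertw}) that your sketch omits. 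You flag this obstacle yourself; it is a real gap, not a routine verification.

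The paper's proof avoids intertwining the normalizer altogether, and the trick you are missing is to aim for \emph{full} intertwining of $N$ into both tensor factors rather than to refute one-sided intertwining of a diffuse summand. By \cite[Proposition 2.6]{Va10} there is a projection $q$ in the center of $\mathcal{N}_{L(\Gamma\times\Gamma)}(N)''$ with $Nq\prec^f L(\Gamma)\ot 1$ and $N(1-q)\nprec L(\Gamma)\ot 1$; on $1-q$ the $\Crss$ dichotomy makes the normalizer, hence both $L(\Lambda_i)(1-q)$, amenable relative to $L(\Gamma)\ot 1$, and then \cite[Proposition 2.7]{PV11}, Lemma \ref{lem1} and Proposition \ref{prop-SV-rel-amen-intertw} produce a corner $L(\Lambda_2)q_0$ amenable relative to $\C 1$, a contradiction. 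Hence $q=1$, so $N\prec^f L(\Gamma)\ot 1$, by symmetry $N\prec^f 1\ot L(\Gamma)$, and therefore $N\prec^f\C 1$, i.e.\ $N$ is atomic. For the second statement your detour through amenability of $R=L(\Lambda_1\Lambda_2)'\cap L(\Gamma\times\Gamma)$ is both unnecessary and shaky: the alternatives ``$L(\Lambda_j)\prec M_i$'' cannot be excluded at this stage (after a unitary conjugacy they actually hold, by the later lemmas), and the case you would really need to exclude is that both $L(\Lambda_1)$ and $L(\Lambda_2)$ embed into the \emph{same} factor $M_i$, not that one of them embeds into both. The paper instead applies the first statement to an arbitrary abelian subalgebra $D\subset R$, which is automatically amenable and is normalized by $L(\Lambda_1\Lambda_2)$ since they commute; every such $D$ is then atomic, hence $R$ is atomic.
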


\begin{proof}
Using \cite[Proposition 2.6]{Va10}, we find a projection $q$ in the center of the normalizer of $N$ such that $Nq\prec^f L(\Gamma)\otimes1$ and $N(1-q)\nprec L(\Gamma)\otimes1$.

Assume for contradiction that $q\neq1$. Since $N(1-q)\nprec L(\Gamma)\otimes1$ and since $\Gamma \in \Crss$, it follows that $L(\Lambda_i)(1-q)$ is amenable relative to $L(\Gamma)\otimes1$ for both $i=1,2$. It then follows from \cite[Proposition 2.7]{PV11} that $L(\Lambda_1)(1-q)$ is nonamenable relative to $1\otimes L(\Gamma)$, hence $L(\Lambda_2)(1-q)\prec1\otimes L(\Gamma)$ by Lemma \ref{lem1}. By Proposition \ref{prop-SV-rel-amen-intertw}, we get a nonzero projection $q_0 \leq 1-q$ that commutes with $L(\Lambda_2)$ such that $L(\Lambda_2)q_0$ is amenable relative to $1\otimes L(\Gamma)$. But since $L(\Lambda_2)q_0$ is also amenable relative to $L(\Gamma)\otimes1$, it follows from \cite[Proposition 2.7]{PV11} that $L(\Lambda_2)q_0$ is amenable relative to $\C1$, hence a contradiction.

We conclude that $q=1$ so that $N\prec^f L(\Gamma)\otimes1$. By symmetry, we also get that $N\prec^f 1\otimes L(\Gamma)$. Hence $N\prec^f\C1$, so that $N$ is atomic.

To prove that $L(\Lambda_1 \Lambda_2)' \cap L(\Gamma \times \Gamma)$ is atomic, it suffices to prove that every abelian von Neumann subalgebra $D \subset L(\Lambda_1 \Lambda_2)' \cap L(\Gamma \times \Gamma)$ is atomic. But then $D$ is amenable and its normalizer contains $L(\Lambda_1 \Lambda_2)$, so that $D$ is indeed atomic.
\end{proof}

The proof of the following lemma is essentially contained in the proof of \cite[Proposition 12]{OP03}.

\begin{lemma}\label{lem7-SV}
For every minimal projection $e \in L(\Lambda_1 \Lambda_2)' \cap L(\Gamma \times \Gamma)$, there exist projections $p \in M_n(\C) \ot L(\Gamma)$, $q \in L(\Gamma) \ot M_m(\C)$ and a partial isometry $u \in M_{n,1}(\C) \ot L(\Gamma \times \Gamma) \ot M_{m,1}(\C)$ such that $u^* u = e$, $uu^* = p \ot q$ and such that
\begin{align*}
\text{either}\quad & u L(\Lambda_1) u^* \subset p (M_n(\C) \ot L(\Gamma)) p \ot q & \;\text{and}\quad & u L(\Lambda_2) u^* \subset p \ot q (L(\Gamma) \ot M_m(\C)) q \;\; ,\\
\text{or}\quad & u L(\Lambda_1) u^* \subset p \ot q (L(\Gamma) \ot M_m(\C)) q & \;\text{and}\quad & u L(\Lambda_2) u^* \subset p (M_n(\C) \ot L(\Gamma)) p \ot q \; .
\end{align*}
\end{lemma}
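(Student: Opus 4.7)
The plan is to follow Ozawa and Popa's strategy in \cite[Proposition 12]{OP03}, proceeding in three steps with the main difficulty concentrated in the final assembly. Writing $P_L := L(\Gamma) \otimes 1$ and $P_R := 1 \otimes L(\Gamma)$, both realize $L(\Gamma \times \Gamma)$ as a crossed product by $\Gamma \in \Crss$. The first step establishes that, after possibly swapping $\Lambda_1 \leftrightarrow \Lambda_2$, we have $L(\Lambda_1) \prec_{L(\Gamma \times \Gamma)} P_L$. To see this, apply Lemma \ref{lem1} to the commuting pair $(L(\Lambda_1), L(\Lambda_2))$ with respect to each of $P_L, P_R$ in both orderings, yielding four dichotomies. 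Combined with \cite[Proposition 2.7]{PV11}, which forbids a non-amenable subalgebra from being amenable relative to both $P_L$ and $P_R$, this forces at least one intertwining $L(\Lambda_i) \prec P_\bullet$ to hold.

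For the second step, I produce the second intertwining on the opposite side via commutation. Writing the intertwining $L(\Lambda_1) \prec P_L$ as $xv = v\theta(x)$ for $x \in L(\Lambda_1)$ via \cite[Theorem 2.1]{Po03}, with $\theta : L(\Lambda_1) \to q_0(M_n(\C) \otimes P_L)q_0$ a normal $*$-homomorphism and $v$ a partial isometry in $M_{1,n}(\C) \otimes L(\Gamma \times \Gamma)$, the commutation $[L(\Lambda_1), L(\Lambda_2)] = 0$ forces $v^* L(\Lambda_2) v$ to lie in the commutant of $\theta(L(\Lambda_1))$ inside $q_0(M_n(\C) \otimes L(\Gamma \times \Gamma))q_0$. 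Since $L(\Gamma \times \Gamma) = P_L \, \overline{\otimes} \, P_R$, this commutant factors as an amplification of $P_R$ tensored with a corner of the commutant of $\theta(L(\Lambda_1))$ in $M_n(\C) \otimes P_L$. Using the non-amenability of $L(\Lambda_2)$ and one more application of the $\Crss$ dichotomy to rule out that $v^* L(\Lambda_2) v$ lands in the first tensor factor, this yields $L(\Lambda_2) \prec_{L(\Gamma \times \Gamma)} P_R$.

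Finally, the main step is to assemble the two intertwinings into the single partial isometry $u$ of the statement. Apply \cite[Theorem 2.1]{Po03} to both intertwinings to produce partial isometries $w_1, w_2$ and projections $p \in M_n(\C) \otimes L(\Gamma)$, $q \in L(\Gamma) \otimes M_m(\C)$, embedding corners of $L(\Lambda_1)$ and $L(\Lambda_2)$ into the respective amplifications. The crucial input is the minimality of $e$ in $L(\Lambda_1 \Lambda_2)' \cap L(\Gamma \times \Gamma)$, which gives $(L(\Lambda_1 \Lambda_2) e)' \cap e L(\Gamma \times \Gamma) e = \C e$ and thereby forces the support projection of the joint embedding of $L(\Lambda_1 \Lambda_2) e$ to split as a tensor product. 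The main obstacle of the proof is precisely this assembly: showing that $w_1$ and $w_2$ can be spliced into a single $u \in M_{n,1}(\C) \otimes L(\Gamma \times \Gamma) \otimes M_{m,1}(\C)$ with $u^* u = e$ and $uu^* = p \otimes q$, simultaneously conjugating $L(\Lambda_1) e$ into the left tensor leg and $L(\Lambda_2) e$ into the right. This assembly is the content of the argument in \cite[Proposition 12]{OP03}.
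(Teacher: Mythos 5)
Your first step matches the paper's: \cite[Proposition 2.7]{PV11} plus Lemma \ref{lem1} give, after exchanging $\Lambda_1$ and $\Lambda_2$ if needed, that $L(\Lambda_1)e \prec L(\Gamma)\ot 1$. The genuine gap is in your final step. You reduce everything to ``splicing'' two partial isometries $w_1,w_2$ coming from two independent applications of \cite[Theorem 2.1]{Po03}, and you then declare this splicing to be the content of \cite[Proposition 12]{OP03}; but that splicing is exactly what has to be proved, and as described it does not go through: applying the intertwining theorem separately to $L(\Lambda_1)$ and $L(\Lambda_2)$ produces partial isometries whose supports and target corners are completely unrelated, and the commutation between $L(\Lambda_1)$ and $L(\Lambda_2)$ is no longer visible, so there is no reason they can be combined into one $u$ conjugating both algebras simultaneously. (Your intermediate derivation of $L(\Lambda_2)\prec 1\ot L(\Gamma)$ is also not quite right as stated -- what one needs is atomicity of $\theta(L(\Lambda_1))'\cap p(M_n(\C)\ot L(\Gamma))p$, not ``ruling out landing in the first tensor factor'' -- but more importantly it is unnecessary.)

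The correct mechanism uses only the single partial isometry $v$ with $(\theta(x)\ot 1)v = vx$ for $x \in L(\Lambda_1)$, where $\theta(L(\Lambda_1)) \subset p(M_n(\C)\ot L(\Gamma))p$. Since $\Gamma \in \Crss$ and $\Lambda_1$ is nonamenable, the relative commutant $\theta(L(\Lambda_1))' \cap p(M_n(\C)\ot L(\Gamma))p$ is atomic, and after cutting $p$ by one of its minimal projections it becomes $\C p$; then the commutant of $\theta(L(\Lambda_1))\ot 1$ inside $p(M_n(\C)\ot L(\Gamma\times\Gamma))p$ is exactly $p \ot L(\Gamma)$, so that $vPv^* \subset p\ot L(\Gamma)$ for $P := L(\Lambda_1)'\cap L(\Gamma\times\Gamma) \supset L(\Lambda_2)$: the algebra $L(\Lambda_2)$ is placed in the right leg automatically by the same $v$, with no second intertwining. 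The minimality of $e$ then enters differently from what you suggest: since $\cZ(P) \subset L(\Lambda_1\Lambda_2)'\cap L(\Gamma\times\Gamma)$, minimality gives $\cZ(P)e = \C e$, so $ePe$ is a II$_1$ factor, and this factoriality is what allows one to choose partial isometries $v_1,\dots,v_m \in ePe$ with $v_iv_i^* \leq v^*v$ and $\sum_i v_i^*v_i = e$, and to set $u = \sum_i v v_i \ot e_{i1}$, which satisfies $u^*u = e$ and $uu^* = p\ot q$ with $q = \sum_i \eta(v_iv_i^*)\ot e_{ii}$, where $\eta$ is determined by $vyv^* = p\ot\eta(y)$. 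In other words, the tensor splitting of $uu^*$ is built in by construction; minimality is what lets you enlarge $v$, whose support only satisfies $v^*v \leq e$, to a partial isometry whose support is $e$ on the nose. Until this construction is actually carried out, the main step of the lemma is missing.
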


\begin{proof}
By \cite[Proposition 2.7]{PV11}, $L(\Lambda_2)e$ is nonamenable relative to either $L(\Gamma) \ot 1$ or $1 \ot L(\Gamma)$. Assume that $L(\Lambda_2)e$ is nonamenable relative to $L(\Gamma) \ot 1$. By Lemma \ref{lem1}, $L(\Lambda_1)e \prec L(\Gamma) \ot 1$. Take a projection $p \in M_n(\C) \ot L(\Gamma)$, a nonzero partial isometry $v \in (p \ot 1) (M_{n,1}(\C) \ot L(\Gamma \times \Gamma))e$ and a unital normal $*$-homomorphism $\theta : L(\Lambda_1) \recht p (M_n(\C) \ot L(\Gamma))p$ such that
$$(\theta(x) \ot 1)v = v x \quad\text{for all}\;\; x \in L(\Lambda_1) \; .$$
Since $\Gamma \in \Crss$ and $\Lambda_1$ is nonamenable, the relative commutant $\theta(L(\Lambda_1))' \cap p (M_n(\C) \ot L(\Gamma))p$ is atomic. Cutting with a minimal projection, we may assume that this relative commutant equals $\C p$.

Write $P := L(\Lambda_1)' \cap L(\Gamma \times \Gamma)$ and note that $v^* v, e \in P$ with $v^* v \leq e$. Since $L(\Lambda_2) \subset P$, we have that $\cZ(P) \subset L(\Lambda_1 \Lambda_2)' \cap L(\Gamma \times \Gamma)$. It follows that $\cZ(P) e = \C e$. So, $ePe$ is a II$_1$ factor and we can take partial isometries $v_1,\ldots,v_m \in ePe$ with $v_i v_i^* \leq v^* v$ for all $i$ and $\sum_{i=1}^m v_i^* v_i = e$. Define $u \in M_{n,1}(\C) \ot L(\Gamma \times \Gamma) \ot M_{m,1}(\C)$ given by $u = \sum_{i=1}^m v v_i \ot e_{i1}$.

Since $v P v^*$ commutes with $\theta(L(\Lambda_1)) \ot 1$, we have $v P v^* \subset p \ot L(\Gamma)$ and we can define the normal $*$-homomorphism $\eta : v^* v P v^* v \recht L(\Gamma)$ such that $v y v^* = p \ot \eta(y)$ for all $y \in v^* v P v^* v$. By construction, $u^* u = e$ and $u u^* = p \ot q$ where $q \in L(\Gamma) \ot M_m(\C)$ is the projection given by $q = \sum_{i=1}^m \eta(v_i v_i^*) \ot e_{ii}$. Defining the $*$-homomorphism
$$\etatil : e P e \recht q(L(\Gamma) \ot M_m(\C))q : \etatil(y) = \sum_{i,j=1}^m \eta(v_i y v_j^*) \ot e_{ij}$$
and using that $L(\Lambda_2)e \subset ePe$, we get that
$$u L(\Lambda_1) u^* = \theta(L(\Lambda_1)) \ot q \quad\text{and}\quad u L(\Lambda_2) u^* = p \ot \etatil(L(\Lambda_2) e) \; .$$
This concludes the proof of the lemma.
\end{proof}

Recall from Section \ref{height} the notion of height of an element in a group von Neumann algebra (here, $L(\Gamma \times \Gamma)$), as well as the height of a subgroup of $\cU(L(\Gamma \times \Gamma))$. The proof of the following lemma is very similar to the proof of \cite[Theorem 4.1]{Io10}.

\begin{lemma}\label{lem8-SV}
For every projection $p \in L(\Lambda_1 \Lambda_2)' \cap L(\Gamma \times \Gamma)$, we have that $h_{\Gamma \times \Gamma}(\Lambda_1 \Lambda_2 p) > 0$.
\end{lemma}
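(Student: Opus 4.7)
My proof plan follows the strategy of \cite[Theorem 4.1]{Io10} closely, adapted to handle the matrix amplification produced by Lemma \ref{lem7-SV}.

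\textbf{Reduction and setup.} I would first reduce to minimal projections: by Lemma \ref{lem6}, $L(\Lambda_1\Lambda_2)' \cap L(\Gamma \times \Gamma)$ is atomic, so every projection $p$ in it is a finite sum of minimal projections. It suffices to prove a uniform positive lower bound for $h_{\Gamma \times \Gamma}(\Lambda_1\Lambda_2 e)$ for each minimal $e$; the general case then follows by running the same argument with $p$ in place of $e$ (the matrix dimensions in Lemma \ref{lem7-SV} grow, but the structure of the proof is unchanged). Fixing such a minimal $e$, Lemma \ref{lem7-SV} provides, up to swapping $\Lambda_1$ and $\Lambda_2$, integers $n,m$, projections $p_0 \in M_n(\C) \ot L(\Gamma)$ and $q_0 \in L(\Gamma) \ot M_m(\C)$, a partial isometry $u \in M_{n,1}(\C) \ot L(\Gamma \times \Gamma) \ot M_{m,1}(\C)$ with $u^* u = e$ and $u u^* = p_0 \ot q_0$, and unital normal $*$-homomorphisms $\theta,\etatil$ with $u v_1 v_2 u^* = \theta(v_1) \ot \etatil(v_2 e)$ for $v_1 \in \Lambda_1$ and $v_2 \in \Lambda_2$. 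As in the proof of Lemma \ref{lem7-SV}, after cutting with a minimal projection we may also assume $\theta(L(\Lambda_1))' \cap p_0(M_n(\C) \ot L(\Gamma)) p_0 = \C p_0$, and analogously on the $\etatil$ side.

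\textbf{The key height lower bound.} The technical heart of the argument is to prove, for some $\delta > 0$ independent of $v_1 \in \Lambda_1$ and $v_2 \in \Lambda_2$, the inequality $\sum_{(g,h) \in \Gamma \times \Gamma} |\tau(v_1 v_2 e \cdot u_{(g,h)}^*)|^4 \geq \delta^2$. Since $\sum |\tau(v_1 v_2 e \cdot u_{(g,h)}^*)|^2 = \tau(e)$ is fixed and $\sup a_g \cdot \sum a_g \geq \sum a_g^2$, this yields $h_{\Gamma \times \Gamma}(v_1 v_2 e) \geq \delta / \sqrt{\tau(e)}$. To obtain the bound I would mimic the Ioana comultiplication argument in the matrix-amplified setting: consider the amplified comultiplications $\id_n \ot \Delta_\Gamma$ and $\Delta_\Gamma \ot \id_m$ (where $\Delta_\Gamma(u_g) = u_g \ot u_g$) applied to $\theta(v_1) \in M_n(\C) \ot L(\Gamma)$ and $\etatil(v_2 e) \in L(\Gamma) \ot M_m(\C)$ respectively. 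Writing $\theta(v_1) = \sum_g \theta(v_1)_g \ot u_g$ with $\theta(v_1)_g \in M_n(\C)$ (and similarly for $\etatil$), the relevant invariant object is an element built from $\theta(v_1)$ and its image under the amplified comultiplication, whose $\|\,\cdot\,\|_2^2$ computes $\sum_g \|\theta(v_1)_g\|_2^4$. The triviality of the relative commutant $\theta(L(\Lambda_1))' \cap p_0(M_n(\C) \ot L(\Gamma)) p_0 = \C p_0$, combined with the intertwining obstruction $\theta(L(\Lambda_1)) \nprec L(C_\Gamma(g))$ for all $g \neq e$ (which holds because $\Lambda_1$ is nonamenable while $C_\Gamma(g)$ is amenable in any torsion-free group from $\Crss$, as recorded after Lemma \ref{lem1}), would then force this invariant object to have $\|\,\cdot\,\|_2 \geq \delta_1 > 0$. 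The analogous bound on the $\etatil$ side gives $\sum_h \|\etatil(v_2 e)_h\|_2^4 \geq \delta_2^2$. These matrix-valued bounds transfer back to $\sum_{(g,h)} |\tau(v_1 v_2 e \cdot u_{(g,h)}^*)|^4$ via the identity $u v_1 v_2 u^* = \theta(v_1) \ot \etatil(v_2 e)$ and the $L^2$-isometry implemented by $\Ad u^*$.

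\textbf{Main obstacle.} The hard part will be the matrix-valued height estimate. While the skeleton of Ioana's comultiplication argument transfers, its adaptation requires careful identification of the correct invariant vector in the amplified bimodule and systematic use of the triviality of the relative commutant from Lemma \ref{lem7-SV} to force this vector to have the desired canonical form; additional care is needed in the translation step, where one must verify that the factorwise matrix-valued estimates on $\theta(v_1)$ and $\etatil(v_2 e)$ combine correctly through $\Ad u^*$ to yield the claimed lower bound on the Fourier coefficients of $v_1 v_2 e$ in $L(\Gamma \times \Gamma)$.
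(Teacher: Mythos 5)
You have correctly reproduced the reduction (minimal projections, Lemma \ref{lem7-SV} separating the two factors, proving $h_{\Gamma\times\Gamma}(\Lambda_i p)>0$ for each $i$), but the step you call the technical heart — a uniform lower bound on $\sum_g \|\theta(v_1)_g\|_2^4$ extracted from weak mixing of $\Ad$, triviality of the relative commutant and $\theta(L(\Lambda_1))\nprec L(C_\Gamma(g))$ via a comultiplication argument — is exactly where the argument breaks down, and it cannot be repaired along these lines. In the IPV-style comultiplication machinery (the paper's Theorem \ref{thm.group-vnalg}, \cite[Theorem 3.1]{IPV}), the quantity $\sum_g |(v)_g|^4$ appears as $\tau\bigl((v\ot\Delta(v))(\Delta(v)^*\ot v^*)\bigr)$, and positivity of the height is the \emph{hypothesis} that guarantees the minimal-norm element of the convex hull is nonzero; nothing in the qualitative hypotheses (1) and (2) of that theorem prevents the averaged element from being $0$. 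Those hypotheses are "soft" properties that give no quantitative control whatsoever on Fourier coefficients, whereas height is a position-dependent quantity; if your implication were valid, hypothesis (3) of Theorem \ref{thm.group-vnalg} would be redundant, and Lemma \ref{lem8-SV} would be a general statement about any commuting nonamenable subgroups of unitaries of $L(\Gamma\times\Gamma)$ with trivial relative commutants — which is not what is true or provable here. In short, the comultiplication trick consumes a height bound; it does not produce one.

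What your plan never uses is the only extra structure that makes the lemma true: $\Lambda_1$ and $\Lambda_2$ are subgroups of $\Lambda$, hence normalize the Cartan subalgebra $B$, and the action $\Gamma\times\Gamma\actson A=A_0^\Gamma$ is built over the left-right translation action, hence has a strong clustering property. The paper argues by contradiction: if $h_{\Gamma\times\Gamma}(\Lambda_1 p)=0$, take $v_n\in\Lambda_1$ with $h_{\Gamma\times\Gamma}(pv_n)\recht 0$ and prove, using Lemma \ref{lem7-SV} only to concentrate the Fourier support of $pv_n$ on a set of the form $\Gamma\times F_0$ and a Cauchy--Schwarz estimate against elementary tensors in $A_0^\Gamma$, that $\|P_S(p v_n a w_n)\|_2\recht 0$ for every $a\in M\ominus L(\Gamma\times\Gamma)$, every finite $S$ and arbitrary unitaries $w_n\in L(\Gamma\times\Gamma)$. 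The contradiction then comes from the Cartan subalgebra: $B\prec^f A$ (since $\Gamma\in\Crss$), $B\nprec L(\Gamma\times\Gamma)$ by Lemma \ref{lem3}, and crucially $v_n b v_n^*\in\cU(B)$ because $\Lambda_1$ normalizes $B$, so that $\|p v_n b v_n^*\|_2=\|p\|_2$ cannot tend to something $\leq 3\|p\|_2/4$. Without invoking the normalization of $B$ by $\Lambda_1$, the intertwining $B\prec^f A$, and the mixing/clustering of the co-induced action, the height bound is out of reach; so as it stands your proposal has a genuine gap at its central step.
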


\begin{proof}
Fix a minimal projection $p \in L(\Lambda_1 \Lambda_2)' \cap L(\Gamma \times \Gamma)$. It suffices to prove that $h_{\Gamma \times \Gamma}(\Lambda_1 \Lambda_2 p) > 0$. Using the conjugacy of Lemma \ref{lem7-SV}, we see that the heights of $\Lambda_1 p$ and $\Lambda_2 p$ do not interact, so that it suffices to prove that $h_{\Gamma \times \Gamma}(\Lambda_i p) > 0$ for $i=1,2$. By symmetry, it is enough to prove this for $i=1$.

Assume for contradiction that $h_{\Gamma \times \Gamma}(\Lambda_1 p) = 0$. Take a sequence $v_n \in \Lambda_1$ such that $h_{\Gamma \times \Gamma}(v_n p) \recht 0$. For every finite subset $S \subset \Gamma \times \Gamma$, we denote by $P_S$ the orthogonal projection of $L^2(M)$ onto the linear span of $L^2(A) u_g$, $g \in S$. We claim that for every sequence of unitaries $w_n \in L(\Gamma \times \Gamma)$, every $a \in M \ominus L(\Gamma \times \Gamma)$ and every finite subset $S \subset \Gamma \times \Gamma$, we have that
$$\lim_n \|P_S(p v_n a w_n)\|_2 = 0 \; .$$
Since $P_S(x) = \sum_{g \in S} E_A(x u_g^*) u_g$, it suffices to prove that $\|E_A(p v_n a w_n)\|_2 \recht 0$ for all $a \in M \ominus L(\Gamma \times \Gamma)$. Every such $a$ can be approximated by a linear combination of elements of the form $a_0 u_g$ with $a_0 \in A \ominus \C 1$ and $g \in \Gamma \times \Gamma$. So, we may assume that $a \in A \ominus \C 1$. Such an element $a$ can be approximated by a linear combination of elementary tensors, so that we may assume that $a = \bigotimes_{i \in \cG} a_i$ for some finite nonempty subset $\cG \subset \Gamma$ and elements $a_i \in A_0 \ominus \C 1$ with $\|a\| \leq 1$. Note that $\sigma_g(a) \perp \sigma_h(a)$ whenever $g,h \in \Gamma \times \Gamma$ and $g \cdot \cG \neq h \cdot \cG$ (where we use the left right action of $\Gamma \times \Gamma$ on $\Gamma$).

Choose $\eps > 0$. By Lemma \ref{lem7-SV}, we can take a finite subset $F_0 \subset \Gamma$ such that, writing $F = \Gamma \times F_0 \cup F_0 \times \Gamma$, we have $\|p v - P_F(p v)\|_2 \leq \eps$ for all $v \in \Lambda_1$. Then,
$$\|E_A(p v_n a w_n) - E_A(P_F(p v_n) a w_n)\|_2 \leq \eps$$
for all $n$, so that in order to prove the claim, it suffices to prove that $\|E_A(P_F(p v_n) a w_n)\|_2 \recht 0$. Put $\kappa = 2 |F_0| |\cG|^2$. Note that for every $h \in \Gamma \times \Gamma$, the set $\{g \in F \mid g \cdot \cG = h \cdot \cG \}$ contains at most $\kappa$ elements. Using the Fourier decomposition for elements in $L(\Gamma \times \Gamma)$, we have
$$E_A(P_F(p v_n) a w_n) = \sum_{g \in F} (p v_n)_g \, (w_n)_{g^{-1}} \, \sigma_g(a) \; .$$
Thus, for all $h \in \Gamma \times \Gamma$, we have
$$|\langle E_A(P_F(p v_n) a w_n) , \sigma_h(a) \rangle| \leq \kappa \, h_{\Gamma \times \Gamma}(p v_n) \; .$$
But then, using the Cauchy-Schwarz inequality, we get that
\begin{align*}
\|E_A(P_F(p v_n) a w_n)\|_2^2 & \leq \sum_{h \in F} |\langle E_A(P_F(p v_n) a w_n) \, , \, (p v_n)_h \, (w_n)_{h^{-1}} \, \sigma_h(a) \rangle| \\
& \leq \kappa \, h_{\Gamma \times \Gamma}(p v_n) \, \sum_{h \in \Gamma \times \Gamma} |(p v_n)_h| \, |(w_n)_{h^{-1}}| \\
& \leq \kappa \, h_{\Gamma \times \Gamma}(p v_n) \, \|p v_n\|_2 \, \|w_n\|_2 \leq \kappa \, h_{\Gamma \times \Gamma}(p v_n) \recht 0 \; .
\end{align*}
So, the claim is proved.

Put $\delta = \|p\|_2 / 4$. Because $\Gamma \in \Crss$ and $B \subset M$ is a Cartan subalgebra, we have that $B \prec^f A$. By Lemma \ref{lem3}, we have $B \not\prec L(\Gamma \times \Gamma)$ and we can take a unitary $b \in \cU(B)$ such that $\|E_{L(\Gamma \times \Gamma)}(b)\|_2 \leq \delta$. Since $B \prec^f A$, we can take a finite subset $S \subset \Gamma \times \Gamma$ such that $\| p d - P_S(p d)\|_2 \leq \delta$ for all $d \in \cU(B)$. For every $n$, we have that $v_n b v_n^* \in \cU(B)$. Therefore,
$$\|p v_n b v_n^* - P_S(p v_n b v_n^*)\|_2 \leq \delta$$
for all $n$. Since $\|E_{L(\Gamma \times \Gamma)}(b)\|_2 \leq \delta$, writing $b_1 = b - E_{L(\Gamma \times \Gamma)}(b)$, we get
$$\| P_S(p v_n b v_n^*) - P_S(p v_n b_1 v_n^*) \|_2 \leq \delta \; .$$
By the claim above, we can fix $n$ large enough such that $\|P_S(p v_n b_1 v_n^*) \|_2 \leq \delta$. So, we have proved that
$$\|p\|_2 = \|p v_n b v_n^*\|_2 \leq 3\delta < \|p\|_2 \; ,$$
which is absurd. So, we have shown that $h_{\Gamma \times \Gamma}(p \Lambda_1) > 0$ and the lemma is proved.
\end{proof}

\begin{lemma}\label{lem9-SV}
There exists a unitary $u \in L(\Gamma \times \Gamma)$ such that $u \Lambda_1 \Lambda_2 u^* \subset \T (\Gamma \times \Gamma)$. Also, the unitary representation $\{\Ad v\}_{v \in \Lambda_1 \Lambda_2}$ is weakly mixing on $L^2(M) \ominus \C 1$.
\end{lemma}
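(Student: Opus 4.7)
The plan is to apply Theorem \ref{thm.group-vnalg} with ambient group $\Gamma\times\Gamma$, subgroup $\cG := \Lambda_1\Lambda_2 \subset \cU(L(\Gamma\times\Gamma))$, and $p := 1$; the conclusion will directly yield the desired unitary $u\in L(\Gamma\times\Gamma)$ with $u\Lambda_1\Lambda_2 u^\ast\subset\T(\Gamma\times\Gamma)$. Hypothesis (3) on heights is exactly Lemma \ref{lem8-SV} applied with $p=1$.

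For hypothesis (2), that $L(\Lambda_1\Lambda_2)\not\prec L(C_{\Gamma\times\Gamma}(g))$ for every $g=(g_1,g_2)\in(\Gamma\times\Gamma)\setminus\{e\}$, note that $C_{\Gamma\times\Gamma}(g)=C_\Gamma(g_1)\times C_\Gamma(g_2)$ and each $C_\Gamma(g_i)$ is amenable whenever $g_i\neq e$ (because $\Gamma$ is torsion-free in $\Crss$). A case analysis on which of $g_1,g_2$ is trivial, combined with the explicit product decomposition of Lemma \ref{lem7-SV} placing $L(\Lambda_1)$ in one tensor factor of $L(\Gamma\times\Gamma)=L(\Gamma)\ovt L(\Gamma)$ and $L(\Lambda_2)$ in the other, forces any putative embedding to push one of the commuting nonamenable algebras $L(\Lambda_i)$ into an amenable subalgebra of $L(\Gamma)$; this contradicts nonamenability through the same type of reasoning already used in Lemma \ref{lem6} (where Lemma \ref{lem1} is applied with $L(\Gamma)\ovt L(\Gamma)=(L(\Gamma)\ovt 1)\rtimes_{\text{triv}}\Gamma$). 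For hypothesis (1), the weak mixing of $\{\Ad v\}_{v\in\Lambda_1\Lambda_2}$ on $L^2(L(\Gamma\times\Gamma))\ominus\C 1$: any nontrivial finite-dimensional invariant subspace would produce elements in the quasi-normalizer of $L(\Lambda_1\Lambda_2)$ inside $L(\Gamma\times\Gamma)$, and combining Lemma \ref{lem7-SV} with Lemma \ref{lem7} applied in each tensor factor separately forces this quasi-normalizer to reduce to $L(\Lambda_1\Lambda_2)$ itself, ruling out nontrivial invariants.

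Once Theorem \ref{thm.group-vnalg} has been applied and we conjugate by $u$, we may assume $\Lambda_1\Lambda_2\subset\T(\Gamma\times\Gamma)$. Let $\Sigma\subset\Gamma\times\Gamma$ denote the image under the projection $\T(\Gamma\times\Gamma)\to\Gamma\times\Gamma$; since $\Lambda\cap\T=\{1\}$ by the Cartan assumption, the subgroups $\Sigma_i$ (images of $\Lambda_i$) are nonamenable and commute. A short argument using that $C_\Gamma(h)$ is amenable for all $h\neq e$ (so that the centralizer in $\Gamma$ of any nonamenable subset is trivial), together with the projections $\pi_i : \Gamma\times\Gamma\to\Gamma$ and the commutation of $\Sigma_1,\Sigma_2$, shows that $\Sigma$ projects nonamenably onto both factors of $\Gamma\times\Gamma$. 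For the final claim about weak mixing of $\{\Ad v\}_{v\in\Lambda_1\Lambda_2}$ on $L^2(M)\ominus\C 1$: decomposing $L^2(M)=\bigoplus_{g\in\Gamma\times\Gamma}L^2(A)u_g$ under the conjugation action $au_g\mapsto\sigma_v(a)u_{vgv^{-1}}$ of $\Sigma$, any nontrivial finite-dimensional invariant subspace either meets $L^2(A)\ominus\C 1$, contradicting weak mixing of the generalized Bernoulli action $\Sigma\curvearrowright A$ (which holds because $\Sigma$ is nonamenable, the action is built over $\Sigma\curvearrowright\Gamma$, and pair stabilizers are amenable), or projects nontrivially onto some $L^2(A)u_g$ with $g\neq e$; the latter forces a finite-index subgroup $\Sigma_0\leq\Sigma$ to centralize $g$, but $\Sigma_0$ still projects nonamenably onto both factors of $\Gamma\times\Gamma$, which contradicts $C_\Gamma(g_i)$ being amenable whenever $g_i\neq e$.

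The main obstacle will be verifying hypothesis (1) of Theorem \ref{thm.group-vnalg}: converting an abstract finite-dimensional $\Ad$-invariant subspace into a contradiction requires delicate use of both the explicit product decomposition of Lemma \ref{lem7-SV} and the quasi-normalizer control of Lemma \ref{lem7}, working carefully inside each tensor factor of $L(\Gamma\times\Gamma)$. Hypothesis (2) is a parallel but slightly more straightforward application of the same toolkit, and once $\Lambda_1\Lambda_2\subset\T(\Gamma\times\Gamma)$ has been established, the secondary weak-mixing statement on $L^2(M)\ominus\C 1$ follows routinely from the triviality of centralizers of nonamenable subgroups of $\Gamma$.
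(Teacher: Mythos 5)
There is a genuine gap at the very first step: you plan to apply Theorem \ref{thm.group-vnalg} with $p=1$ and $\cG=\Lambda_1\Lambda_2$, but hypothesis (1) — weak mixing of $\{\Ad v\}_{v\in\Lambda_1\Lambda_2}$ on $L^2(L(\Gamma\times\Gamma))\ominus\C 1$ — is not something that can be verified at this stage; a priori it may simply fail. For instance, $\Lambda_1\cap\Lambda_2$ is a central subgroup of $\Lambda_1\Lambda_2$ (it is finite because $L(\Lambda_1\Lambda_2)'\cap L(\Gamma\times\Gamma)$ is atomic, but nothing yet forces it to be trivial, since $\Lambda$ may have torsion), and any nontrivial element $z$ of it gives the $\Ad$-fixed vector $z-\tau(z)1$. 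More generally, the virtual centralizer $K$ of $\Lambda_1\Lambda_2$ in $\Lambda$, the almost periodic part $B_0\subset B$ of the conjugation action, and the atoms of $L(\Lambda_1\Lambda_2)'\cap L(\Gamma\times\Gamma)$ (Lemma \ref{lem6} gives atomicity, not triviality) all produce finite dimensional $\Ad$-invariant subspaces. Your proposed remedy — controlling the quasi-normalizer of $L(\Lambda_1\Lambda_2)$ inside $L(\Gamma\times\Gamma)$ via Lemmas \ref{lem7-SV} and \ref{lem7} — cannot rule these out, because the offending finite dimensional invariant subspaces may sit inside $L(\Lambda_1\Lambda_2)$ itself (the group $\Lambda_1\Lambda_2$ need not be icc), where a statement about the quasi-normalizer says nothing. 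In fact, hypothesis (1) with $p=1$ is essentially part of the conclusion of Lemma \ref{lem9-SV}, so assuming it provable up front begs the question.

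The paper's proof is organized precisely around this difficulty: it defines $K$ as the virtual centralizer of $\Lambda_1\Lambda_2$ in $\Lambda$ and $B_0$ as the compact part of the conjugation action on $B$, shows $B_0\rtimes K\subset L(\Gamma\times\Gamma)$ (Lemma \ref{lem7}), that it is amenable (via the decreasing finite-index subgroups and Lemma \ref{lem6}) and hence atomic, so $K$ is finite and $B_0$ atomic. One then passes to finite-index subgroups $\Lambda_3<\Lambda_1$, $\Lambda_4<\Lambda_2$ commuting with a minimal projection $p$ of $B_0\rtimes K$, and applies Theorem \ref{thm.group-vnalg} to $p\Lambda_3\Lambda_4$: weak mixing on $pL(\Gamma\times\Gamma)p\ominus\C p$ now holds by construction, the height hypothesis comes from Lemma \ref{lem8-SV} applied to $\Lambda_3,\Lambda_4$, and the conclusion $p=1$ of the theorem is what finally shows $B_0\rtimes K=\C 1$ — which is also exactly how the paper obtains the second assertion (weak mixing on $L^2(M)\ominus\C 1$), rather than by a separate Bernoulli-type argument as you sketch. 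Your treatment of hypothesis (2) and your concluding argument for weak mixing on $L^2(M)\ominus\C 1$ are reasonable in outline, but they hinge on the first step, so the proposal as it stands does not prove the lemma.
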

\begin{proof}
Write $\Lambda_0 = \Lambda_1 \Lambda_2$. Denote the action of $\Lambda$ on $B$ by $\gamma_v(b) = vbv^*$ for all $v \in \Lambda$, $b \in B$. Define $K < \Lambda$ as the virtual centralizer of $\Lambda_0$ inside $\Lambda$, i.e.\ $K$ consists of all $v \in \Lambda$ such that the set $\{w v w^{-1} \mid w \in \Lambda_0\}$ is finite. Define $B_0 \subset B$ as the von Neumann algebra generated by the unital $*$-algebra consisting of all $b \in B$ such that $\{\gamma_v(b) \mid v \in \Lambda_0\}$ spans a finite dimensional subspace of $B$. Note that $B_0$ is globally invariant under $\gamma_v$, $v \in \Lambda_0$. Viewing $M$ as the crossed product $M = B \rtimes \Lambda$, we have by construction that the unitary representation $\{\Ad v\}_{v \in \Lambda_0}$ is weakly mixing on $L^2(M) \ominus L^2(B_0 \rtimes K)$.

For every $g \in \Gamma$, define $\stab g$ as in the beginning of the proof of Lemma \ref{lem1-SV}. We have $L(\Lambda_0) \subset L(\Gamma \times \Gamma)$ and $L(\Lambda_0) \not\prec L(\stab g)$ for all $g \in \Gamma$. By Lemma \ref{lem7}, we have $B_0 \rtimes K \subset L(\Gamma \times \Gamma)$. Since we can take decreasing sequences of finite index subgroups $\Lambda_{i,n} < \Lambda_i$, $i=1,2$, such that $K = \bigcup_n C_\Lambda(\Lambda_{1,n} \Lambda_{2,n})$, it follows from Lemma \ref{lem6} that $L(K)$ is contained in a hyperfinite von Neumann algebra. So, $K$ is amenable and thus also $B_0 \rtimes K$ is amenable. Since $B_0 \rtimes K$ is normalized by $\Lambda_0$, it follows from Lemma \ref{lem6} that $B_0 \rtimes K$ is atomic. So, $K$ is a finite group and $B_0$ is atomic. We can then take a minimal projection $p \in B_0 \rtimes K$ and finite index subgroups $\Lambda_3 < \Lambda_1$ and $\Lambda_4 < \Lambda_2$ such that $p$ commutes with $\Lambda_3 \Lambda_4$.

Lemmas \ref{lem6}, \ref{lem7-SV} and \ref{lem8-SV} apply to the commuting nonamenable subgroups $\Lambda_3,\Lambda_4 < \Lambda$. So, by Lemma \ref{lem8-SV}, we get that $h_{\Gamma \times \Gamma}(p \Lambda_3 \Lambda_4) > 0$. By construction, the unitary representation $\{\Ad v\}_{v \in \Lambda_3 \Lambda_4}$ is weakly mixing on $p L(\Gamma \times \Gamma) p \ominus \C p$. For every $g \in \Gamma \times \Gamma$ with $g \neq e$, the centralizer $C_{\Gamma \times \Gamma}(g)$ is either amenable or of the form $\Gamma \times L$ or $L \times \Gamma$ with $L < \Gamma$ amenable. Therefore, $L(\Lambda_3 \Lambda_4) \not\prec L(C_{\Gamma \times \Gamma}(g))$ for all $g \neq e$. It then first follows from Theorem \ref{thm.group-vnalg} that $p = 1$, so that we could have taken $\Lambda_3 = \Lambda_1$ and $\Lambda_4 = \Lambda_2$, and then also that there exists a unitary $u \in L(\Gamma \times \Gamma)$ such that $u \Lambda_1 \Lambda_2 u^* \subset \T (\Gamma \times \Gamma)$.

Since we also proved that $B_0 \rtimes K = \C 1$, it follows as well that the unitary representation $\{\Ad v\}_{v \in \Lambda_1 \Lambda_2}$ is weakly mixing on $L^2(M) \ominus \C 1$.
\end{proof}

\begin{lemma}\label{lem10-a-SV}
Whenever $\Lambda_2 \subset \T (\{e\} \times \Gamma)$ is a nonamenable subgroup, we have $M \cap \Lambda_2' = L(\Gamma) \ot 1$.
\end{lemma}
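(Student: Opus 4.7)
The plan is a direct Fourier-analytic computation of the relative commutant. The inclusion $L(\Gamma)\ot 1 = L(\Gamma\times\{e\})\subset M\cap\Lambda_2'$ is clear since $\Gamma\times\Gamma$ is a direct product, so I focus on the reverse inclusion. Because $\Lambda_2\subset\T(\{e\}\times\Gamma)$ and central scalars act trivially by conjugation, I may pass from $\Lambda_2$ to its image modulo $\T$ under $\{e\}\times\Gamma\cong\Gamma$; this image is a nonamenable subgroup $H<\Gamma$ (nonamenability being inherited from $\Lambda_2$ since $\Lambda_2\cap\T$ is amenable). The goal becomes: every $x\in M$ commuting with $u_{(e,k)}$ for all $k\in H$ lies in $L(\Gamma)\ot 1$.

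First I expand $x=\sum_{(g,h)\in\Gamma\times\Gamma} x_{g,h}u_{(g,h)}$ with $x_{g,h}\in A$ and compare Fourier coefficients in $u_{(e,k)}xu_{(e,k)}^\ast=x$, obtaining
\[
x_{g,\,khk^{-1}}=\sigma_{(e,k)}(x_{g,h})\qquad\text{for all } g,h\in\Gamma,\ k\in H.
\]
Since $\sigma_{(e,k)}$ is trace preserving, $\|x_{g,h}\|_2$ is constant along each $H$-conjugacy orbit in $\Gamma$ (with $g$ fixed). For $h\neq e$ the conjugation stabilizer $H\cap C_\Gamma(h)$ is amenable, because $\Gamma$ is torsion free and in $\Crss$ so $C_\Gamma(h)$ is amenable (as recorded in the discussion after Lemma \ref{lem1}); hence this stabilizer has infinite index in the nonamenable group $H$. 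Square-summability of $\|x\|_2^2$ then forces $x_{g,h}=0$ whenever $h\neq e$. Setting $h=e$ in the displayed relation gives $x_{g,e}\in A^H$, where $A^H$ denotes the fixed-point algebra of the $H$-action $k\mapsto\sigma_{(e,k)}$ on $A$.

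It then remains to prove $A^H=\C 1$; this is what I expect to be the main computational step. I would use the orthogonal decomposition $L^2(A)=\bigoplus_{\cS}\bigotimes_{l\in\cS}L^2(A_0\ominus\C 1)$ indexed by finite subsets $\cS\subset\Gamma$. Since $\sigma_{(e,k)}$ permutes positions by $l\mapsto lk^{-1}$ (twisted by $\be_k$), it isometrically maps the $\cS$-summand onto the $(\cS k^{-1})$-summand. For $\cS$ nonempty and finite, $\cS k=\cS$ would force $\{sk^n : n\geq 0\}\subset\cS$ for any $s\in\cS$, so $k$ would have finite order, whence $k=e$ by torsion freeness of $\Gamma$. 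Thus the $H$-orbit of any nonempty finite $\cS$ is infinite (as $H$ is infinite), and square-summability of the component norms of an $H$-fixed vector annihilates every nonempty summand, giving $A^H=\C 1$. Combining, $x\in L(\Gamma\times\{e\})=L(\Gamma)\ot 1$, as required. The two uses of torsion freeness (first to amplify nonamenability of $H$ into infinite conjugacy classes in $\Gamma$, and then to rule out finite-orbit configurations in $A_0^\Gamma$) are the tightest points of the argument; everything else is routine Fourier bookkeeping on the crossed product and on the Bernoulli base.
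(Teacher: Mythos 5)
Your proof is correct and follows essentially the same route as the paper: the paper passes to the nonamenable subgroup $\Gamma_2<\Gamma$ with $\T\Lambda_2=\T(\{e\}\times\Gamma_2)$, uses that nonamenable subgroups of torsion free groups in $\Crss$ are relatively icc to land in $A\rtimes(\Gamma\times\{e\})$, and then uses mixing of $\{e\}\times\Gamma\actson A$ to conclude, which is exactly your two-step Fourier argument with the relative icc-ness and the triviality of the fixed-point algebra $A^H$ written out explicitly.
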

\begin{proof}
Define $\Gamma_2 < \Gamma$ such that $\T \Lambda_2 = \T (\{e\} \times \Gamma_2)$. Then $\Gamma_2$ is nonamenable and $M \cap \Lambda_2' = M \cap L(\{e\} \times \Gamma_2)'$. Since $\Gamma_2 < \Gamma$ is relatively icc, we have $M \cap L(\{e\} \times \Gamma_2)' \subset A \rtimes (\Gamma \times \{e\})$. Since the action $\{e\} \times \Gamma \actson A$ is mixing, it follows that $M \cap L(\{e\} \times \Gamma_2)' \subset L(\Gamma \times \{e\})$. So, $M \cap \Lambda_2' \subset L(\Gamma) \ot 1$ and the converse inclusion is obvious.
\end{proof}

\begin{lemma}\label{lem10-SV}
There exist commuting subgroups $H_1,H_2 < \Lambda$ and a unitary $u \in M$ such that $\Lambda_i < H_i$ for $i=1,2$ and $u \T H_1 H_2 u^* = \T(\Gamma \times \Gamma)$.
\end{lemma}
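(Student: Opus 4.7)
The plan is to apply Lemmas \ref{lem9-SV} and \ref{lem7-SV} first to reduce, after inner conjugation by some $u_0 \in M$, to the situation where $\Lambda_1 \subset \T(\Gamma \times \{e\})$ and $\Lambda_2 \subset \T(\{e\} \times \Gamma)$. I then take $H_1 := C_\Lambda(\Lambda_2)$ and $H_2 := C_\Lambda(\Lambda_1)$, which automatically contain $\Lambda_1, \Lambda_2$ respectively. By Lemma \ref{lem10-a-SV} applied to $\Lambda_2$, $M \cap \Lambda_2' = L(\Gamma \times \{e\})$, so $H_1 \subset L(\Gamma \times \{e\})$; symmetrically $H_2 \subset L(\{e\} \times \Gamma)$. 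As these subalgebras commute, $H_1$ and $H_2$ commute as subgroups of $\Lambda$.

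Write $\Gamma_1' = \Gamma \times \{e\}$ and $\Gamma_2' = \{e\} \times \Gamma$. I will then apply Theorem \ref{thm.group-vnalg} to each $H_i$ viewed inside $L(\Gamma_i') \cong L(\Gamma)$ to obtain a unitary $u_i \in L(\Gamma_i')$ with $u_i H_i u_i^* \subset \T \Gamma_i'$. The weak mixing of $\{\Ad v\}_{v \in H_i}$ on $L^2(L(\Gamma_i')) \ominus \C 1$ descends from the weak mixing of $\Lambda_1 \Lambda_2$ on $L^2(M) \ominus \C 1$ (Lemma \ref{lem9-SV}), since $\Lambda_{3-i}$ acts trivially on $L(\Gamma_i')$. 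The non-intertwinement $L(H_i) \not\prec L(C_\Gamma(g))$ for $g \neq e$ follows by monotonicity of intertwining from $L(\Lambda_i) \not\prec L(C_\Gamma(g))$, using that $L(\Lambda_i)$ is a nonamenable II$_1$ factor (the image of $\Lambda_i$ in $\Gamma$ is nonamenable, hence relatively icc in $\Gamma \in \Crss$) while $C_\Gamma(g)$ is amenable. The key hypothesis is positive height $h_\Gamma(H_i) > 0$; I would obtain this by rerunning the proof of Lemma \ref{lem8-SV} on the commuting nonamenable pair $(H_1, \Lambda_2)$, for which $L(H_1 \Lambda_2) \subset L(\Gamma \times \Gamma)$ holds without further conjugation and $L(H_1 \Lambda_2)' \cap L(\Gamma \times \Gamma) = \C 1$ via the computation $L(H_i)' \cap L(\Gamma_i') \subset M \cap \Lambda_i' \cap L(\Gamma_i') = L(\Gamma_{3-i}') \cap L(\Gamma_i') = \C 1$. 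Since any $v_2 \in \Lambda_2 \subset \T \Gamma_2'$ has Fourier support on a single group element, the height of a product $v_1 v_2 \in H_1 \Lambda_2$ factors as $h_{\Gamma \times \Gamma}(v_1 v_2) = h_\Gamma(v_1)$, yielding $h_\Gamma(H_1) > 0$, and symmetrically for $H_2$. Adapting Lemma \ref{lem8-SV} to this new pair is the main technical step.

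Setting $u := u_1 u_2 u_0 \in M$, and using that $u_1 \in L(\Gamma_1')$ commutes with $H_2 \subset L(\Gamma_2')$ and vice versa, gives $u H_i u^* \subset \T \Gamma_i'$ for both $i$, hence $u \T H_1 H_2 u^* \subset \T(\Gamma \times \Gamma)$. The reverse inclusion, completing the equality in the statement, amounts to showing $L(u H_i u^*) = L(\Gamma_i')$, i.e.\ that the image of $u H_i u^*$ in $\Gamma_i'$ is surjective. I would attempt this via the identification $u H_i u^* = u \Lambda u^* \cap L(\Gamma_i')$, obtained by a second application of Lemma \ref{lem10-a-SV} in the conjugated picture to $u H_{3-i} u^*$ (which remains a nonamenable subgroup of $\T \Gamma_{3-i}'$). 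The hardest part of the argument will be this final equality step: showing that the group-measure-space unitaries $\Lambda$, after conjugation, saturate each factor $L(\Gamma_i')$ up to $\T$. I expect this to leverage the irreducibility of $L(\Gamma_i') \subset M$ (the action $\Gamma_i' \actson A = A_0^\Gamma$ is ergodic, so $L(\Gamma_i')' \cap M = \C 1$), but will likely require a further structural input beyond the previous lemmas.
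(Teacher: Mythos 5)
Your first half follows the paper's strategy quite closely: reduce via Lemma \ref{lem9-SV} to $\Lambda_1 \subset \T(\Gamma \times \{e\})$, $\Lambda_2 \subset \T(\{e\} \times \Gamma)$, locate a commuting pair $H_1,H_2$ inside the two tensor factors using Lemma \ref{lem10-a-SV}, rerun Lemma \ref{lem8-SV} on the pair $(H_1,\Lambda_2)$ to get positive height, check weak mixing and $L(H_i) \not\prec L(C_\Gamma(g))$ (your monotonicity argument is fine, via the bimodule characterization of $\prec$), and finish with Theorem \ref{thm.group-vnalg}. But the step you yourself flag as open is precisely the content of the lemma, and it is a genuine gap: everything you invoke only yields inclusions ($u H_i u^* \subset \T \Gamma_i'$ from Theorem \ref{thm.group-vnalg}, and $M \cap \Lambda_2' \subset L(\Gamma) \ot 1$ from Lemma \ref{lem10-a-SV}); nothing gives a lower bound on $H_i$, i.e.\ that $H_1$ generates all of $L(\Gamma) \ot 1$. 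A second application of Lemma \ref{lem10-a-SV} in the conjugated picture again only bounds a relative commutant from above, so the proposed identification $u H_i u^* = u\Lambda u^* \cap L(\Gamma_i')$ does not follow, and irreducibility of $L(\Gamma_i') \subset M$ is of no help. Worse, your specific choice $H_1 = C_\Lambda(\Lambda_2)$ (the plain centralizer) need not even satisfy the required equality.

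The missing idea in the paper is to take $H_1$ to be the \emph{virtual} centralizer of $\Lambda_2$ in $\Lambda$ and $B_1 \subset B$ the algebra of elements fixed by a finite index subgroup of $\Lambda_2$, and to compute $L(\Lambda_2)' \cap M$ exactly. Writing $x \in L(\Lambda_2)' \cap M$ in its Fourier decomposition over $M = B \rtimes \Lambda$ and using square-summability, the coefficients of $x$ are supported on elements of $\Lambda$ with finite $\Lambda_2$-conjugation orbit and lie in $B_1$; hence $L(\Lambda_2)' \cap M \subset B_1 \rtimes H_1$. Since finite index subgroups of $\Lambda_2$ are still nonamenable, Lemma \ref{lem10-a-SV} gives $B_1 \rtimes H_1 \subset L(\Gamma) \ot 1 \subset L(\Lambda_2)' \cap M$, so $B_1 \rtimes H_1 = L(\Gamma) \ot 1$ (this is where finite orbits and finite-index stabilizers, i.e.\ the virtual versions, are forced on you). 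One then shows $B_1$ is atomic (it sits in $L(\Gamma) \ot 1$, is normalized by $H_1$, and $\Gamma \in \Crss$, as in Lemma \ref{lem6}) and $B_1 = \C 1$ by the weak mixing of $\{\Ad v\}_{v \in \Lambda_1\Lambda_2}$ from Lemma \ref{lem9-SV}. Thus $L(H_1) = L(\Gamma) \ot 1$, and it is this saturation that upgrades the inclusion $u_1 H_1 u_1^* \subset \T\Gamma$ from Theorem \ref{thm.group-vnalg} to the equality $\T u_1 H_1 u_1^* = \T(\Gamma \times \{e\})$, because the image of $u_1 H_1 u_1^*$ in $\Gamma$ generates $L(\Gamma)$ and a proper subgroup cannot. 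Repeating the argument with the virtual centralizer of $H_1$ produces $H_2$ with $L(H_2) = 1 \ot L(\Gamma)$ and hence the equality $u \T H_1 H_2 u^* = \T(\Gamma \times \Gamma)$. Without this relative commutant computation, your argument proves only the inclusion, not the lemma.
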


\begin{proof}
By Lemma \ref{lem9-SV}, after a unitary conjugacy, we may assume that $\Lambda_1,\Lambda_2 < \Lambda$ are commuting nonamenable subgroups with $\Lambda_1 \Lambda_2 \subset \T(\Gamma \times \Gamma)$. Since $\Gamma$ is torsion free and belongs to $\Crss$, we have that $C_\Gamma(g)$ is amenable for every $g \neq e$. Therefore, after exchanging $\Lambda_1$ and $\Lambda_2$ if needed, we have $\Lambda_1 \subset \T (\Gamma \times \{e\})$ and $\Lambda_2 \subset \T(\{e\} \times \Gamma)$.

Denote by $\{\gamma_v\}_{v \in \Lambda}$ the action of $\Lambda$ on $B$. Define $H_1 < \Lambda$ as the virtual centralizer of $\Lambda_2$ inside $\Lambda$. So, $H_1$ consists of all $v \in \Lambda$ that commute with a finite index subgroup of $\Lambda_2$. Similarly, define $B_1$ as the von Neumann algebra generated by the $*$-algebra consisting of all $b \in B$ such that $\gamma_v(b)=b$ for all $v$ in a finite index subgroup of $\Lambda_2$. Since finite index subgroups of $\Lambda_2$ are nonamenable, it follows from Lemma \ref{lem10-a-SV} that $B_1 \rtimes H_1 \subset L(\Gamma) \ot 1$. We also find that
$$L(\Gamma) \ot 1 \subset L(\Lambda_2)' \cap (B \rtimes \Lambda) \subset B_1 \rtimes H_1 \; .$$
So, $B_1 \rtimes H_1 = L(\Gamma) \ot 1$. In particular, the subgroups $H_1,\Lambda_2 < \Lambda$ commute. Because $\Gamma \in \Crss$ and $B_1 \subset L(\Gamma) \ot 1$ is normalized by $H_1$, it follows that $B_1$ is atomic. Since $\Lambda_1 < H_1$, the unitaries $v \in \Lambda_1 \Lambda_2$ normalize $B_1$. By Lemma \ref{lem9-SV}, they induce a weakly mixing action on $B_1$. Since $B_1$ is atomic, this forces $B_1 = \C 1$. We conclude that $L(H_1) = L(\Gamma) \ot 1$.

We now apply Lemmas \ref{lem6}, \ref{lem7-SV} and \ref{lem8-SV} to the commuting nonamenable subgroups $H_1,\Lambda_2 < \Lambda$. We conclude that $h_\Gamma(H_1) > 0$. Since $L(H_1) = L(\Gamma) \ot 1$, the group $H_1$ is icc. So, the action $\{\Ad v\}_{v \in H_1}$ on $L(\Gamma) \ominus \C 1$ is weakly mixing. Since for $g \neq e$, the group $C_\Gamma(g)$ is amenable, also $L(H_1) \not\prec L(C_\Gamma(g))$. So, by Theorem \ref{thm.group-vnalg}, there exists a unitary $u_1 \in L(\Gamma)$ such that $(u_1 \ot 1) H_1 (u_1^* \ot 1) = \T(\Gamma \times \{e\})$.

Applying the same reasoning as above to the virtual centralizer of $H_1$ inside $\Lambda$, we find a subgroup $H_2 < \Lambda$, containing $\Lambda_2$ and commuting with $H_1$, and we find a unitary $u_2 \in L(\Gamma)$ such that $(1 \ot u_2) H_2 (1 \ot u_2^*) \subset \T (\{e\} \rtimes \Gamma)$. So, we get that
$$(u_1 \ot u_2) H_1 H_2 (u_1^* \ot u_2^*) = \T(\Gamma \times \Gamma) \; .$$
\end{proof}

Finally, we are ready to prove Theorem \ref{main-SV}. As mentioned above, Theorem \ref{thmB} is a direct consequence of Theorem \ref{main-SV}.

\begin{proof}[Proof of Theorem \ref{main-SV}]
First assume that $(B_0,\Lambda_0)$ is a gms decomposition of $A_0$ satisfying $\beta_g(B_0) = B_0$ and $\beta_g(\Lambda_0) = \Lambda_0$ for all $g \in \Gamma$. Then, $\{\beta_g\}_{g \in \Gamma}$ defines an action of $\Gamma$ by automorphisms of the group $\Lambda_0$. We can co-induce this to the action of $\Gamma \times \Gamma$ by automorphisms of the direct sum group $\Lambda_0^{(\Gamma)}$ given by $(g,h) \cdot \pi_k(v) = \pi_{gkh^{-1}}(\beta_h(v))$ for all $g,h,k \in \Gamma$, $v \in \Lambda_0$, where $\pi_k : \Lambda_0 \recht \Lambda_0^{(\Gamma)}$ denotes the embedding as the $k$'th direct summand. Putting $B = B_0^\Lambda$ and $\Lambda := \Lambda_0^{(\Gamma)}\rtimes(\Gamma\times\Gamma)$, we have found the crossed product decomposition $M = B \rtimes \Lambda$. It is easy to check that $B \subset M$ is maximal abelian, so that $(B,\Lambda)$ is indeed a gms decomposition of $M$.

Conversely, assume that $(B,\Lambda)$ is an arbitrary gms decomposition of $M$. By Lemma \ref{lem10-SV} and after a unitary conjugacy, we have $\Gamma \times \Gamma \subset \T \Lambda$. Denoting by $\Delta : M \recht M \ovt M$ the dual coaction associated with $(B,\Lambda)$ and given by $\Delta(b) = b \ot 1$ for all $b \in B$ and $\Delta(v) = v \ot v$ for all $v \in \Lambda$, this means that $\Delta(u_{(g,h)})$ is a multiple of $u_{(g,h)} \ot u_{(g,h)}$ for all $(g,h) \in \Gamma \times \Gamma$.

Denote $A_{0,e}:=\pi_e(A_0)\subset A$ and observe that $A_{0,e}$ commutes with all $u_{(g,g)}$, $g \in \Ker \beta$. Then, $\Delta(A_{0,e})$ commutes with all $u_{(g,g)} \ot u_{(g,g)}$, $g \in \Ker \beta$. Since $\Gamma$ is a torsion free group in $\Crss$, the nontrivial normal subgroup $\Ker \be < \Gamma$ must be nonamenable and thus relatively icc. It follows that the unitary representation $\{\Ad u_{(g,g)}\}_{g\in\Ker\beta}$ is weakly mixing on $L^2(M)\ominus L^2(A_{0,e})$. This implies that $\Delta(A_{0,e})\subset A_{0,e}\tensor A_{0,e}$.

By Lemma \ref{lem4}, we get a crossed product decomposition $A_0 = B_0 \rtimes \Lambda_0$ such that $\pi_e(B_0) = B \cap A_{0,e}$ and $\pi_e(\Lambda_0) = \Lambda \cap A_{0,e}$. For every $g \in \Gamma$, we have that $u_{(g,g)} \in \T \Lambda$. So, $u_{(g,g)}$ normalizes both $B$ and $A_{0,e}$, so that $\beta_g(B_0) = B_0$. Also, $u_{(g,g)}$ normalizes both $\Lambda$ and $A_{0,e}$, so that $\beta_g(\Lambda_0) = \Lambda_0$. For every $g \in \Gamma$, we have that $u_{(g,e)} \in \T \Lambda$ so that $u_{(g,e)}$ normalizes $B$ and $\Lambda$. It follows that $\pi_g(B_0) \subset B$ and $\pi_g(\Lambda_0) \subset \Lambda$ for all $g \in \Gamma$. We conclude that
\begin{equation}\label{eq.inclusion}
B_0^\Gamma \subset B \quad\text{and}\quad \Lambda_0^{(\Gamma)} \rtimes (\Gamma \times \Gamma) \subset \T \Lambda \; .
\end{equation}
Since $A_0$ is generated by $B_0$ and $\Lambda_0$, we get that $M$ is generated by $B_0^\Gamma$ and $\Lambda_0^{(\Gamma)} \rtimes (\Gamma \times \Gamma)$. Since $M$ is also the crossed product of $B$ and $\Lambda$, it follows from \eqref{eq.inclusion} that $B_0^\Gamma = B$ and $\T \Lambda_0^{(\Gamma)} \rtimes (\Gamma \times \Gamma) = \T \Lambda$. In particular, $B_0 \subset A_0$ must be maximal abelian. So, $(B_0,\Lambda_0)$ is a gms decomposition of $A_0$ that is $\{\beta_g\}_{g \in \Gamma}$-invariant, while the gms decomposition $(B,\Lambda)$ of $M$ is unitarily conjugate to the gms decomposition associated with $(B_0,\Lambda_0)$.

It remains to prove statements \ref{main.uc} and \ref{main.autc}. Take $\{\beta_g\}_{g \in \Gamma}$-invariant gms decompositions $(B_0,\Lambda_0)$ and $(B_1,\Lambda_1)$ of $A_0$. Denote by $(B,\Lambda)$ and $(B',\Lambda')$ the associated gms decompositions of $M$.

To prove \ref{main.uc}, assume that $u \in M$ is a unitary satisfying $uBu^\ast=B'$ and $u\T\Lambda u^\ast=\T\Lambda'$. It follows that for all $g\in\Gamma\times\Gamma$, we have $uu_gu^\ast\in \cU(A)u_{\varphi(g)}$ where $\varphi \in \Aut(\Gamma\times\Gamma)$. Write $u=\sum_{h\in\Gamma\times\Gamma}a_hu_h$ with $a_h\in A$ for the Fourier decomposition of $u$. It follows that $\{\varphi(g)^{-1}hg\mid g\in\Gamma\times\Gamma\}$ is a finite set whenever $a_h\neq0$. Since $\Gamma\times\Gamma$ is icc, it follows that $a_h$ can only be nonzero for one $h\in\Gamma\times\Gamma$. So $u$ is of the form $u=a_hu_h$. Since $u_h$ normalizes both $B$ and $\Lambda$, we may replace $u$ with $uu_h^\ast$ so that $u\in\mathcal{U}(A)$.

For each $g\in\Gamma$, we define $E_g\colon A\to A_0$ by $E_g(x)=\pi_g^{-1}(E_{\pi_g(A_0)}(x))$, $x\in A$. Let $(g_n)_{n\in\N}$ be a sequence in $\Gamma$ that tends to infinity, and let $b\in B_0$. Since $(\pi_{g_n}(b))_{n\in\N}$ is an asymptotically central sequence in $A$, we get that
$$ B_1\ni E_{g_n}(u\pi_{g_n}(b)u^\ast)\to b \; , $$
hence $B_0\subset B_1$. By symmetry, it follows that $B_0=B_1$. Similarly, we see that $\T\Lambda_0=\T\Lambda_1$ so we conclude that $(B_0,\Lambda_0)$ and $(B_1,\Lambda_1)$ are identical gms decompositions of $A_0$.

To prove \ref{main.autc}, assume that $\theta \in \Aut(M)$ is an automorphism satisfying $\theta(B) = B'$ and $\theta(\T\Lambda) = \T\Lambda'$. Define the commuting subgroups $\Lambda_1,\Lambda_2 < \Lambda'$ such that $\theta(\T(\Gamma \times \{e\})) = \T \Lambda_1$ and $\theta(\T(\{e \} \times \Gamma)) = \T \Lambda_2$. Applying Lemma \ref{lem10-SV} to the gms decomposition $(B',\Lambda')$ of $M$ and these commuting subgroups $\Lambda_1,\Lambda_2 < \Lambda'$, we find commuting subgroups $H_1,H_2 < \Lambda'$ and a unitary $u \in M$ such that $\Lambda_i < H_i$ for $i=1,2$ and $u \T H_1 H_2 u^* = \T (\Gamma \times \Gamma)$. Since $\Gamma \times \{e\}$ and $\{e\} \times \Gamma$ are each other's centralizer inside $\Lambda$ and since $\theta(\T \Lambda) = \T \Lambda'$, we must have that $\Lambda_i = H_i$ for $i=1,2$.

Writing $\theta_1 = \Ad u \circ \theta$, we have proved that $\theta_1(\T(\Gamma \times \Gamma)) = \T (\Gamma \times \Gamma)$. This equality induces an automorphism of $\Gamma \times \Gamma$. Since $\Gamma$ is a torsion free group in $\Crss$, all automorphisms of $\Gamma \times \Gamma$ are either of the form $(g,h) \mapsto (\vphi_1(g),\vphi_2(h))$ or of the form $(g,h) \mapsto (\vphi_1(h),\vphi_2(g))$ for some automorphisms $\vphi_i \in \Aut(\Gamma)$. The formulas $\zeta(u_{(g,h)}) = u_{(h,g)}$ and $\zeta(\pi_k(a)) = \pi_{k^{-1}}(\beta_k(a))$ for all $g,h,k \in \Gamma$ and $a \in A_0$ define an automorphism $\zeta \in \Aut(M)$ satisfying $\zeta(B') = B'$ and $\zeta(\Lambda') = \Lambda'$. So composing $\theta$ with $\zeta$ if necessary, we may assume that we have $\vphi_1,\vphi_2 \in \Aut(\Gamma)$ such that $\theta_1(u_{(g,h)}) \in \T u_{(\vphi_1(g),\vphi_2(h))}$ for all $g,h \in \Gamma$. We still have that the gms decompositions $(\theta_1(B),\theta_1(\Lambda))$ and $(B',\Lambda')$ of $M$ are unitarily conjugate.

Because $u_{(g,g)}$ commutes with $\pi_e(A_0)$ for all $g \in \Ker \be$, the unitary representation on $L^2(M) \ominus \C 1$ given by $\{\Ad u_{(\vphi_1(g),\vphi_2(g))}\}_{g \in \Ker \be}$  is not weakly mixing. There thus exists a $k \in \Gamma$ such that $\vphi_1(g) k = k \vphi_2(g)$ for all $g$ in a finite index subgroup of $\Ker \be$. So after replacing $\theta$ by $(\Ad u_{(e,k)}) \circ \theta$, which globally preserves $B'$ and $\Lambda'$, we may assume that $\vphi_1(g) = \vphi_2(g)$ for all $g$ in a finite index subgroup of $\Ker \be$. Since $\Ker \be < \Gamma$ is relatively icc, this implies that $\vphi_1(g) = \vphi_2(g)$ for all $g \in \Ker \be$. Since $\Ker \be$ is a normal subgroup of $\Gamma$, it follows that $\vphi_1(k) \vphi_2(k)^{-1}$ commutes with $\vphi_1(g)$ for all $k \in \Gamma$ and $g \in \Ker \be$. So, $\vphi_1 = \vphi_2$ and we denote this automorphism as $\vphi$.

Taking the commutant of the unitaries $u_{(g,g)}$, $g \in \Ker \be$, it follows that $\theta_1(\pi_e(A_0)) = \pi_e(A_0)$. We define the automorphism $\theta_0 \in \Aut(A_0)$ such that $\theta_1 \circ \pi_e = \pi_e \circ \theta_0$. Since $\theta_1(u_{(g,g)}) \in \T u_{(\vphi(g),\vphi(g))}$, we get that $\theta_0 \circ \be_g = \be_{\vphi(g)} \circ \theta_0$ for all $g \in \Gamma$. It follows that $(\theta_0(B_0),\theta_0(\Lambda_0))$ is a $\{\be_g\}_{g \in \Gamma}$-invariant gms decomposition of $A_0$. The associated gms decomposition of $M$ is $(\theta_1(B),\theta_1(\Lambda))$. This gms decomposition of $M$ is unitarily conjugate with the gms decomposition $(B',\Lambda')$. It then follows from \ref{main.uc} that $\theta_0(B_0) = B_1$ and $\theta_0(\T \Lambda_0) = \T \Lambda_1$.
\end{proof}

\begin{prop}\label{prop-final}
Under the same hypotheses and with the same notations as in Theorem \ref{main-SV}, if $(B_0,\Lambda_0)$ and $(B_1,\Lambda_1)$ are $\{\be_g\}_{g \in \Gamma}$-invariant gms decompositions of $A_0$, then the associated Cartan subalgebras of $M$ given by $B_0^\Gamma$ and $B_1^\Gamma$ are
\begin{enumlist}
\item\label{final.one} unitarily conjugate iff $B_0 = B_1$~;
\item\label{final.two} conjugate by an automorphism of $M$ iff there exists a trace preserving automorphism $\theta_0 : A_0 \recht A_0$ and an automorphism $\vphi \in \Aut(\Gamma)$ such that $\theta_0(B_0) = B_1$ and $\theta_0 \circ \be_g = \be_{\vphi(g)} \circ \theta_0$ for all $g \in \Gamma$.
\end{enumlist}
\end{prop}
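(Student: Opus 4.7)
The plan is to establish (1) by an asymptotic central sequence argument and then deduce (2) by combining (1) with Theorem \ref{main-SV}. The backward directions are immediate: $B_0 = B_1$ yields $B_0^\Gamma = B_1^\Gamma$, and given $\theta_0,\vphi$ as in (2), the formulas $\theta(\pi_k(a)) = \pi_{\vphi(k)}(\theta_0(a))$ and $\theta(u_{(g,h)}) = u_{(\vphi(g),\vphi(h))}$ define an automorphism of $M$ sending $B_0^\Gamma$ to $B_1^\Gamma$; the covariance $\theta_0 \be_g = \be_{\vphi(g)} \theta_0$ is exactly the compatibility needed for $\theta$ to respect the crossed product relation.

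For the forward direction of (1), suppose $uB_0^\Gamma u^* = B_1^\Gamma$ for some unitary $u \in M$. I would first perform a twisting reduction to arrange that $E_A(u) \neq 0$: the $\{\be_g\}$-invariance of $B_0$ and $B_1$ makes $B_0^\Gamma$ and $B_1^\Gamma$ globally $\sigma_{(g,h)}$-invariant, so every $u_{(g,h)}$ normalizes both Cartan subalgebras. Writing $u = \sum_{k \in \Gamma \times \Gamma} a_k u_k$ and picking $k$ with $a_k \neq 0$, the unitary $u_{k^{-1}}u$ still conjugates $B_0^\Gamma$ to $B_1^\Gamma$, and its $e$-th Fourier coefficient is $\sigma_{k^{-1}}(a_k) \neq 0$. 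So after this replacement I may assume $E_A(u) \neq 0$.

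Next, for $b \in B_0$ and any sequence $g_n \to \infty$ in $\Gamma$, the element $u\pi_{g_n}(b)u^*$ belongs to $B_1^\Gamma = \overline{\bigotimes}_{k \in \Gamma} B_1$, so $\pi_{g_n}^{-1}(E_{\pi_{g_n}(A_0)}(u\pi_{g_n}(b)u^*))$ lies in $B_1$ for every $n$. Since $u\pi_{g_n}(b)u^* \in A$, the Fourier expansion gives
$$u\pi_{g_n}(b)u^* = \sum_{k = (k_1,k_2) \in \Gamma \times \Gamma} a_k \, \pi_{k_1 g_n k_2^{-1}}(\be_{k_2}(b)) \, a_k^* \; .$$
For each fixed $k \neq e$, the indices $g_n$ and $k_1 g_n k_2^{-1}$ differ for all large $n$, because $\Gamma$ is icc. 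Truncating the Fourier expansion and approximating each $a_k$ by a finitely supported tensor, a routine computation will give
$$\pi_{g_n}^{-1}\bigl(E_{\pi_{g_n}(A_0)}(u\pi_{g_n}(b)u^*)\bigr) \xrightarrow[n \to \infty]{\|\cdot\|_2} \|E_A(u)\|_2^2 \, b + \bigl(1 - \|E_A(u)\|_2^2\bigr) \tau(b) \cdot 1 \; .$$
Since the left-hand side lies in the 2-norm-closed subspace $B_1$, since $1 \in B_1$, and since $\|E_A(u)\|_2 > 0$, this forces $b \in B_1$. Running the same argument with $u^*$ in place of $u$ gives $B_1 \subset B_0$, proving $B_0 = B_1$.

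For (2), given $\theta \in \Aut(M)$ with $\theta(B_0^\Gamma) = B_1^\Gamma$, the pushforward $(B_1^\Gamma, \theta(\Lambda_0^{(\Gamma)} \rtimes (\Gamma \times \Gamma)))$ is a gms decomposition of $M$. By the main part of Theorem \ref{main-SV} some unitary $w \in M$ conjugates it to a standard-form decomposition $((B_0')^\Gamma, (\Lambda_0')^{(\Gamma)} \rtimes (\Gamma \times \Gamma))$ associated to some $\{\be_g\}$-invariant $(B_0', \Lambda_0')$. Then $\Ad w \circ \theta$ is an automorphism of $M$ conjugating the gms decomposition associated to $(B_0, \Lambda_0)$ to the one associated to $(B_0', \Lambda_0')$, so Theorem \ref{main-SV}(2) produces $\theta_0 \in \Aut(A_0)$ and $\vphi \in \Aut(\Gamma)$ with $\theta_0(B_0) = B_0'$ and $\theta_0 \be_g = \be_{\vphi(g)} \theta_0$. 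Finally, since $w$ unitarily conjugates $B_1^\Gamma$ to $(B_0')^\Gamma$, applying part (1) to $(B_1, \Lambda_1)$ and $(B_0', \Lambda_0')$ gives $B_0' = B_1$, and therefore $\theta_0(B_0) = B_1$. The main obstacle will be the forward direction of (1): the unitary $u$ only lies in $M$, not $A$, so its diagonal Fourier coefficient could a priori vanish, and the preliminary twisting step exploiting the $\sigma$-invariance of the two Cartan subalgebras is what makes the subsequent asymptotic limit informative.
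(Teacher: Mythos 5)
Your backward implications and your proof of (2) follow the same route as the paper: push the gms decomposition forward by $\theta$, use Theorem \ref{main-SV} to conjugate it unitarily to a standard one $(B_0',\Lambda_0')$, apply Theorem \ref{main-SV}\ref{main.autc} to $\Ad w\circ\theta$, and invoke part \ref{final.one} to identify $B_0'=B_1$. For the forward direction of \ref{final.one}, however, you take a genuinely different route. The paper never analyses the conjugating unitary at all: it proves the stronger statement that $B_0^\Gamma\nprec_M B_1^\Gamma$ whenever $B_0\neq B_1$, by taking $u\in\cU(B_0)\setminus B_1$ and the unitaries $w_n=\pi_{g_{n+1}}(u)\cdots\pi_{g_{2n}}(u)$, for which $\|E_{B_1^\Gamma}(xw_ny)\|_2\to 0$; since unitary conjugacy implies intertwining, this settles \ref{final.one} with no Fourier analysis over $\Gamma\times\Gamma$. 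You instead work directly with the conjugating unitary: the twisting step (legitimate, since $\be$-invariance of $B_1$ makes $B_1^\Gamma$ globally $\sigma$-invariant, so the $u_k$ normalize it) reduces to $E_A(u)\neq 0$, and then you run the asymptotically central sequence computation that the paper only uses in the proof of Theorem \ref{main-SV}\ref{main.uc}, where the conjugating unitary has first been reduced to an element of $\cU(A)$ using conjugacy of the full gms decompositions. Your version buys a direct identification of $B_0$ inside $B_1$ at the price of controlling all Fourier coefficients of $u$; the paper's version is shorter and yields the non-embedding statement for free.

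Two points in your argument need repair, though neither changes its structure. First, the assertion that $k_1g_nk_2^{-1}\neq g_n$ for all large $n$ ``because $\Gamma$ is icc'' is false for an arbitrary sequence $g_n\to\infty$: in $\F_2=\langle a,t\rangle$ take $g_n=a^n$ and $k=(a,a)$, so that $k_1g_nk_2^{-1}=g_n$ for every $n$; such a term then contributes $\|a_k\|_2^2\,\be_{k_2}(b)$ rather than a multiple of $\tau(b)1$, and since $\be_{k_2}(b)$ is only known to lie in $B_0$, the conclusion $b\in B_1$ is lost. The fix is to choose the sequence: for a fixed finite truncation set $S$, the bad elements $g$ (those with $g^{-1}k_1g=k_2$ for some $(k_1,k_2)\in S$, $k_1\neq e$) form a finite union of cosets of centralizers $C_\Gamma(k_1)$, which are amenable (as $\Gamma$ is torsion free in $\Crss$) and hence of infinite index, so by B.~H.~Neumann's covering lemma their union has infinite complement; a diagonal choice of $(g_n)$ then validates your limit formula. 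Second, your displayed ``expansion'' of $u\pi_{g_n}(b)u^*$ omits the off-diagonal terms $a_k\,\sigma_k(\pi_{g_n}(b))\,\sigma_{kl^{-1}}(a_l^*)\,u_{kl^{-1}}$ with $k\neq l$; this is harmless because they are annihilated by $E_{\pi_{g_n}(A_0)}=E_{\pi_{g_n}(A_0)}\circ E_A$, but it should be said. (Also, if the $\|\cdot\|_2$-estimates for the truncation of $u$ get awkward, note that weak convergence of the uniformly bounded sequence $E_{g_n}(u\pi_{g_n}(b)u^*)\in B_1$ to $\|E_A(u)\|_2^2\,b+(1-\|E_A(u)\|_2^2)\tau(b)1$ already suffices, since $B_1$ is weakly closed; this weak limit follows by expanding $\tau(\pi_{g_n}(c)^*u\pi_{g_n}(b)u^*)$ term by term with dominated convergence.) With these repairs your proof is correct.
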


\begin{proof}
To prove \ref{final.one}, it suffices to prove that $B_0^\Gamma \not\prec B_1^\Gamma$ if $B_0 \neq B_1$. Take a unitary $u\in \mathcal{U}(B_0)$ such that $u \not\in B_1$.
Then $\|E_{B_1}(u)\|_2<1$. Let $\{g_1,g_2,\ldots\}$ be an enumeration of $\Gamma$ and define the sequence of unitaries $(w_n)\subset\mathcal{U}(B_0^{\Gamma})$ by $w_n= \pi_{g_{n+1}}(u) \, \pi_{g_{n+2}}(u) \, \cdots \, \pi_{g_{2n}}(u)$. Then $\|E_{B_1^{\Gamma}}(xw_ny)\|_2\to0$ for all $x,y\in M$ so that $B_0^{\Gamma}\nprec B_1^{\Gamma}$.

To prove \ref{final.two}, denote by $(B,\Lambda)$ and $(B',\Lambda')$ the gms decompositions of $M$ associated with $(B_0,\Lambda_0)$ and $(B_1,\Lambda_1)$. Assume that $\theta \in \Aut(M)$ satisfies $\theta(B) = B'$. Then, $(B',\theta(\Lambda))$ is a gms decomposition of $M$. By Theorem \ref{main-SV}, $(B',\theta(\Lambda))$ is unitarily conjugate with the gms decomposition associated with a $\{\beta_g\}_{g \in \Gamma}$-invariant gms decomposition $(B_2,\Lambda_2)$ of $A_0$. By \ref{final.one}, we must have $B_2 = B_1$. So the gms decompositions associated with $(B_0,\Lambda_0)$ and $(B_1,\Lambda_2)$ are conjugate by an automorphism of $M$. By Theorem \ref{main-SV}\ref{main.autc} there exists an automorphism $\theta_0 \in \Aut(A_0)$ as in \ref{final.two}.
\end{proof}

\section{\boldmath Examples of II$_1$ factors with a prescribed number of group measure space decompositions}\label{sec.examples}

For every amenable tracial von Neumann algebra $(A_0,\tau_0)$ and for every trace preserving action of $\Gamma = \F_\infty$ on $(A_0,\tau_0)$ with nontrivial kernel, Theorem \ref{main-SV} gives a complete description of all gms decompositions of the II$_1$ factor $M = A_0^{\Gamma} \rtimes (\Gamma \times \Gamma)$ in terms of the $\Gamma$-invariant gms decompositions of $A_0$.

In this section, we construct a family of examples where these $\Gamma$-invariant gms decompositions of $A_0$ can be explicitly determined. In particular, this gives a proof of Theorem \ref{thmA}. We will construct $A_0$ of the form $A_0 = L^\infty(K) \rtimes H_1$ where $H_1$ is a countable abelian group and $H_1 \hookrightarrow K$ is an embedding of $H_1$ as a dense subgroup of the compact second countable group $K$. Note that we can equally view $K$ as $\widehat{H_2}$ where $H_2$ is a countable abelian group and the embedding $H_1 \hookrightarrow \widehat{H_2}$ is given by a bicharacter $\Om : H_1 \times H_2 \recht \T$ that is nondegenerate: if $g \in H_1$ and $\Om(g,h) = 1$ for all $h \in H_2$, then $g = e$~; if $h \in H_2$ and $\Om(g,h)=1$ for all $g \in H_1$, then $h = e$.

We can then view $L^\infty(\widehat{H_2}) \rtimes H_1$ as being generated by the group von Neumann algebras $L(H_1)$ and $L(H_2)$, with canonical unitaries $\{u_g\}_{g \in H_1}$ and $\{u_h\}_{h \in H_2}$, satisfying $u_g u_h = \Om(g,h) u_h u_g$ for all $g \in H_1$, $h \in H_2$.

We call a direct sum decomposition $H_1 = S_1 \oplus T_1$ \emph{admissible} if the closures of $S_1$, $T_1$ in $\widehat{H_2}$ give a direct sum decomposition of $\widehat{H_2}$. This is equivalent to saying that $H_2 = S_2 \oplus T_2$ in such a way that $S_1 = S_2^\perp$, $S_2 = S_1^\perp$, $T_2 = T_1^\perp$ and $T_1 = T_2^\perp$, where the ``orthogonal complement'' is defined w.r.t.\ $\Om$.

\begin{prop}\label{prop.compute-invariant-gms-decompositions}
Let $L_1, L_2$ be torsion free abelian groups and $L_1 \hookrightarrow \widehat{L_2}$ a dense embedding. Put $\Gamma_0 = \SL(3,\Z)$ and $H_i = L_i^3$. Consider the natural action of $\Gamma_0$ on the direct sum embedding $H_1 \hookrightarrow \widehat{H_2}$, defining the trace preserving action $\{\be_g\}_{g \in \Gamma_0}$ of $\Gamma_0$ on $A_0 = L^\infty(\widehat{H_2}) \rtimes H_1$.

Whenever $L_1 = P_1 \oplus Q_1$ is an admissible direct sum decomposition with corresponding $L_2 = P_2 \oplus Q_2$, put $S_i = P_i^3$, $T_i = Q_i^3$ and define $B_0 = L(S_1) \vee L(S_2)$, $\Lambda_0 = T_1 T_2$.

Then $(B_0,\Lambda_0)$ is a $\{\be_g\}_{g \in \Gamma_0}$-invariant gms decomposition of $A_0$. Every $\{\be_g\}_{g \in \Gamma_0}$-invariant gms decomposition of $A_0$ is of this form for a unique admissible direct sum decomposition $L_1 = P_1 \oplus Q_1$.
\end{prop}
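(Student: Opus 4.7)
The forward direction is a direct verification, representing $A_0$ as the twisted group algebra of $H_1 \times H_2$ with commutation $u_g u_h = \Om(g,h) u_h u_g$. Admissibility of $L_1 = P_1 \oplus Q_1$ ensures $\Om_0(P_1, P_2) = 1$ (from $P_1 = P_2^\perp$), so $L(S_1)$ and $L(S_2)$ commute and $B_0 = L(S_1 \oplus S_2)$ is abelian. A Fourier monomial $u_g u_h$ commutes with $B_0$ iff $g \in P_2^\perp = P_1$ and $h \in P_1^\perp = P_2$ coordinate-wise, so $u_g u_h \in B_0$ and $B_0$ is maximal abelian. The group $\Lambda_0 = T_1 T_2 = Q_1^3 \oplus Q_2^3$ normalizes $B_0$ (each Fourier monomial acts on $B_0$ by scalars via $\Om$), has vanishing conditional expectation onto $B_0$ off the identity (since $\Lambda_0 \cap \Sigma = 0$), and together with $B_0$ generates $A_0$ via the direct sum $H_1 \times H_2 = \Sigma \oplus \Lambda_0$. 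The $\{\be_g\}$-invariance holds because $\SL(3,\Z)$ preserves each $P_i^3, Q_i^3 \subset L_i^3$.

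For the converse, let $(B_0, \Lambda_0)$ be any $\{\be_g\}$-invariant gms decomposition. First I would observe that every $\Gamma_0$-orbit in $(H_1 \times H_2) \setminus \{(0,0)\}$ is infinite (applying shear matrices such as the transvection $E_{12} \in \SL(3,\Z)$ produces unboundedly many translates of any nonzero vector), so $\be$ acts weakly mixingly on $L^2(A_0) \ominus \C$ and $\Gamma_0$ acts ergodically on $B_0$. The core technical step is to show that $B_0$ is \emph{Fourier-supported}, i.e., $B_0 = L(\Sigma, c|_\Sigma)$ for a subgroup $\Sigma \subset H_1 \times H_2$. My approach is to exploit the canonical crossed product presentation $A_0 = L(H_2) \rtimes H_1$ with associated dual coaction $\Delta_{A_0}(u_h) = u_h \ot 1$, $\Delta_{A_0}(u_g) = u_g \ot u_g$, and apply Lemma \ref{lem4}: the $\Gamma_0$-equivariance of $\Delta_{A_0}$ combined with the $\Gamma_0$-invariance and ergodicity of $B_0$ should force $\Delta_{A_0}(B_0) \subset B_0 \ovt B_0$, giving $B_0 = (B_0 \cap L(H_2)) \rtimes (H_1$-subgroup$)$; applying the symmetric presentation $A_0 = L(H_1) \rtimes H_2$ completes the identification and yields $\Sigma = S_1 \oplus S_2$ with $S_i \subset H_i$.

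Once Fourier support is established, I would classify the $\Gamma_0$-invariant subgroups. Any $\Gamma_0$-invariant subgroup of $L_i^3$ has the form $P_i^3$: applying $E_{12} \in \SL(3,\Z)$ to $(a,b,c) \in N$ gives $(a+b,b,c) \in N$, hence $(b,0,0) \in N$; combined with permutation matrices this gives $N = (\pi_1 N)^3$. Then any $\Gamma_0$-invariant subgroup $\Sigma \subset H_1 \times H_2$ is in fact a direct product $S_1 \oplus S_2 = P_1^3 \oplus P_2^3$: writing $\Sigma_i = \Sigma \cap H_i = P_i'^3$ and $S_i = \pi_i(\Sigma) = P_i^3$, the quotient $\Sigma/(\Sigma_1 \oplus \Sigma_2)$ is the graph of a $\Gamma_0$-equivariant isomorphism $\phi : (P_1/P_1')^3 \to (P_2/P_2')^3$ between (cubes of) the standard and contragredient $\SL(3,\Z)$-modules; but these are non-isomorphic irreducible representations over $\Q$ and over every $\F_p$, so Schur's lemma forces $\phi = 0$ and $P_i = P_i'$. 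Maximal abelianness of $B_0 = L(\Sigma)$ then forces $\Sigma$ to be maximal isotropic for the skew form $\omega((g,h),(g',h')) = \Om(g,h')\Om(g',h)^{-1}$, which translates to $P_1 = P_2^\perp$ in $L_1$ and $P_2 = P_1^\perp$ in $L_2$. Applying the same direct-product analysis to the $\Gamma_0$-invariant complementing group $\Lambda_0$ identifies it as $T_1 T_2 = Q_1^3 \oplus Q_2^3$ with $L_i = P_i \oplus Q_i$, and the orthogonality forced by $\Lambda_0$ normalizing $B_0$ gives admissibility. Uniqueness is immediate since $P_i^3 = \Sigma \cap H_i$.

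The main obstacle is the Fourier-support step: a general Cartan of a twisted group algebra need not be Fourier-supported, so we must genuinely use the specific rigidity from the $\SL(3,\Z)$-action, via weak mixing of $\be$ together with the coaction $\Delta_{A_0}$, in order to apply Lemma \ref{lem4}. The representation-theoretic non-isomorphism of the standard and contragredient $\SL(3,\Z)$-representations is the secondary key input, used to rule out diagonal subgroups of $H_1 \times H_2$.
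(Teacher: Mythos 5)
Your forward direction and the classification of $\Gamma_0$-invariant subgroups of $L_i^3$ are fine, but the two steps that carry the real weight of the converse are not established. First, the ``Fourier-support'' step: you assert that $\Gamma_0$-equivariance of $\Delta_{A_0}$ together with $\Gamma_0$-invariance and ergodicity of $B_0$ ``should force'' $\Delta_{A_0}(B_0) \subset B_0 \ovt B_0$, and then invoke Lemma \ref{lem4}. There is no weak-mixing principle of this kind: global invariance of a subalgebra under a weakly mixing action is a very weak constraint (the whole algebra $A_0$ is invariant), whereas arguments of this type -- including the paper's own use of the trick in the proof of Theorem \ref{main-SV} -- require the subalgebra to be the relative commutant of unitaries $v$ that satisfy $\Delta(v) \in \T (v \ot v)$, i.e.\ unitaries belonging (up to $\T$) to the group of the relevant decomposition. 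Here the unknown $B_0$ has no such description relative to $\Delta_{A_0}$, and in fact the inclusion $\Delta_{A_0}(B_0) \subset B_0 \ovt B_0$ is essentially equivalent to the Fourier-support statement you are trying to prove, so asserting it begs the question. The paper proceeds quite differently: it uses the stabilizer subgroup $\Gamma_1 < \SL(3,\Z)$ of the first coordinate, identifies $L(H_1^{(1)})$ as the algebra of elements fixed by finite-index subgroups of $\Gamma_1$, and uses invariance of both $B_0$ and $\Lambda_0$ to obtain decompositions $L(H_1^{(i)}) = B_0^{(i)} \rtimes \Lambda_0^{(i)}$ and then $L(H_1) = B_1 \rtimes \Lambda_1$, $L(H_2) = B_2 \rtimes \Lambda_2$; the product form $S_1 \oplus S_2$ then comes out automatically from the commutation criterion, so the representation-theoretic exclusion of diagonal subgroups you invoke is never needed.

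Second, and more seriously, your treatment of $\Lambda_0$ is a gap: you apply ``the same direct-product analysis to the $\Gamma_0$-invariant complementing group $\Lambda_0$'', but $\Lambda_0$ is an abstract group of unitaries in $A_0$ normalizing $B_0$, not a priori a subgroup of $\T\{u_g u_h \mid g \in H_1, h \in H_2\}$, and not a priori abelian. Identifying $\T\Lambda_0$ with $\T(T_1 T_2)$ is the most delicate part of the paper's proof: one must first show that $\Lambda_1$ and $\Lambda_2$ commute, which the paper does through an analysis of the orbit-equivalence cocycle $\om : (T_1 \times T_2) \times (\Soh \times \Sth) \recht \Lambda_0$, decomposing $\om_i$ into a group part and a part valued in normal subgroups $G_i$ and proving the latter is essentially constant; this uses ergodicity of finite-index subgroups of $T_2$ on $\Soh$, which is exactly where the torsion-freeness of $L_1, L_2$ enters. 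Only after $\Lambda_0$ is known to be abelian does maximal abelianness of $L(\Lambda_0)$ give $L(\Lambda_i) = L(T_i)$ and then $\T\Lambda_0 = \T T_1 T_2$. Your proposal contains no substitute for this step, so even granting the Fourier support of $B_0$, the conclusion that the gms decomposition is the one associated to an admissible splitting does not follow from what you have written.
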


\begin{proof}

Let $(B_0,\Lambda_0)$ be a $\{\be_g\}_{g \in \Gamma_0}$-invariant gms decomposition of $A_0$. Define the subgroup $\Gamma_1 < \Gamma_0$ as
$$\Gamma_1 = \Gamma_0 \cap \begin{pmatrix} 1 & * & * \\ 0 & * & * \\ 0 & * & * \end{pmatrix}$$
We also put $H_1^{(1)} = L_1 \oplus 0 \oplus 0$. Because $L_1$ is torsion free, the following holds.
\begin{itemlist}
\item $a \cdot g = g$ for all $a \in \Gamma_1$ and $g \in H_1^{(1)}$.
\item $\Gamma_1 \cdot g$ is infinite for all $g \in H_1 \setminus H_1^{(1)}$.
\item $\Gamma_1^T \cdot h$ is infinite for all $h \in H_2 \setminus \{0\}$, where $\Gamma_1^T$ denotes the transpose of $\Gamma_1$.
\end{itemlist}
From these observations, it follows that $L(H_1^{(1)})$ is equal to the algebra of $\Gamma_1$-invariant elements in $A_0$ and that $L(H_1^{(1)})$ is also equal to the algebra of elements in $A_0$ that are fixed by some finite index subgroup of $\Gamma_1$. Since both $B_0$ and $\Lambda_0$ are globally $\Gamma_0$-invariant, it follows that $L(H_1^{(1)}) = B_0^{(1)} \rtimes \Lambda_0^{(1)}$ for some von Neumann subalgebra $B_0^{(1)}$ and subgroup $\Lambda_0^{(1)} < \Lambda_0$.

We similarly consider $H_1^{(2)} = 0 \oplus L_1 \oplus 0$ and $H_1^{(3)} = 0 \oplus 0 \oplus L_1$. We conclude that $L(H_1^{(i)}) = B_0^{(i)} \rtimes \Lambda_0^{(i)}$ for all $i=1,2,3$. The subgroups $H_1^{(1)}, H_1^{(2)}$ and $H_1^{(3)}$ generate $H_1$ and $H_1$ is abelian. So, ``everything'' commutes and we conclude that $L(H_1) = B_1 \rtimes \Lambda_1$ for some von Neumann subalgebra $B_1 \subset B_0$ and subgroup $\Lambda_1 < \Lambda_0$.

A similar reasoning applies to $L^\infty(\widehat{H_2}) = L(H_2)$ and we get that $L(H_2) = B_2 \rtimes \Lambda_2$ for $B_2 \subset B$ and $\Lambda_2 < \Lambda_0$.

Since $L(H_1) L(H_2)$ is $\|\,\cdot\,\|_2$-dense in $A_0$ and $L(H_1) \cap L(H_2) = \C 1$, we get that $\Lambda_1 \Lambda_2 = \Lambda_0 = \Lambda_2 \Lambda_1$ and $\Lambda_1 \cap \Lambda_2 = \{e\}$. It then follows that for all $b_i \in B_i$ and $s_i \in \Lambda_i$, $i=1,2$, we have that $E_{B_0}(b_1 v_{s_1} b_2 v_{s_2})$ equals zero unless $s_1 = e$ and $s_2 = e$, in which case, we get $b_1 b_2$. We conclude that $B_1 B_2$ is $\|\,\cdot\,\|_2$-dense in $B_0$.

For every $x \in L(H_i)$ and $g \in H_i$, we denote by $(x)_g = \tau(x u_g^*)$ the $g$-th Fourier coefficient of $x$. Comparing Fourier decompositions, we get for all $x_i \in L(H_i)$ that
\begin{equation}\label{eq.criterion-commute}
x_1 x_2 = x_2 x_1 \;\;\text{iff}\;\; \Om(g,h) = 1 \;\;\text{whenever}\;\; g \in H_1 , h \in H_2 , (x_1)_g \neq 0 , (x_2)_h \neq 0 \; .
\end{equation}
Since $B_i \subset L(H_i)$ and since $B_1$, $B_2$ commute, we obtain from \eqref{eq.criterion-commute} subgroups $S_i \subset H_i$ such that $\Om(g,h) = 1$ for all $g \in S_1$, $h \in S_2$ and such that $B_i \subset L(S_i)$. Since $B_1 B_2$ is dense in $B_0$, it follows that $B_0 \subset L(S_1) \vee L(S_2)$. Since $L(S_1) \vee L(S_2)$ is abelian and $B_0$ is maximal abelian, we conclude that $B_0 = L(S_1) \vee L(S_2)$. Thus, $B_i = L(S_i)$ for $i=1,2$. When $g \in S_2^\perp$, the unitary $u_g$ commutes with $L(S_2)$, but also with $L(S_1)$ because $L(S_1) = B_1 \subset L(H_1)$ and $L(H_1)$ is abelian. Since $B_0$ is maximal abelian, we get that $g \in S_1$. So, $S_1 = S_2^\perp$ and similarly $S_2 = S_1^\perp$.

The next step of the proof is to show that $\Lambda_0$ is abelian, i.e.\ that $\Lambda_1$ and $\Lambda_2$ are commuting subgroups of $\Lambda_0$. Put $T_i = H_i / S_i$. Since $S_1 = S_2^\perp$ and $S_2 = S_1^\perp$, we have the canonical dense embeddings $T_1 \hookrightarrow \Sth$ and $T_2 \hookrightarrow \Soh$. Viewing $L^\infty(\Soh \times \Sth) = L(S_1) \vee L(S_2)$ as a Cartan subalgebra of $A_0$, the associated equivalence relation is given by the orbits of the action
$$T_1 \times T_2 \actson \Soh \times \Sth : (g,h) \cdot (y,z) = (h \cdot y, g \cdot z) \; ,$$
where the actions on the right, namely $T_1 \actson \Sth$ and $T_2 \actson \Soh$, are given by translation. But viewing $B_0 = L(S_1) \vee L(S_2)$, the same equivalence relation is given by the orbits of the action $\Lambda_0 \actson \Soh \times \Sth$. We denote by $\om : (T_1 \times T_2) \times (\Soh \times \Sth) \recht \Lambda_0$ the associated $1$-cocycle. By construction, for all $g \in H_1$, $h \in H_2$ and $s \in \Lambda_0$, the support of $E_B(v_s^* u_g u_h)$ is the projection in $L^\infty(\Soh \times \Sth)$ given by the set
$$\{(y,z) \in \Soh \times \Sth \mid \om((gS_1,hS_2),(y,z)) = s \} \; .$$

Since $L(H_1) = B_1 \rtimes \Lambda_1$, for all $g \in T_1$, the map $(y,z) \mapsto \om((g,e),(y,z))$ only depends on the first variable and takes values in $\Lambda_1$ a.e. Reasoning similarly for $h \in T_2$, we find $\om_i : T_i \times \widehat{S_i} \recht \Lambda_i$ such that
$$\om((g,e),(y,z)) = \om_1(g,y) \quad\text{and}\quad \om((e,h),(y,z)) = \om_2(h,z) \quad\text{a.e.}$$
Writing $(g,h) = (g,e) (e,h)$ and $(g,h) = (e,h)(g,e)$, the $1$-cocycle relation implies that
\begin{equation}\label{eq.crucial-one}
\om_1(g,h \cdot y) \, \om_2(h,z) = \om_2(h,g \cdot z) \, \om_1(g,y)
\end{equation}
for all $g \in T_1$, $h \in T_2$ and a.e.\ $y \in \Soh$, $z \in \Sth$.

Define the subgroup $G_1 < \Lambda_1$ given by
$$G_1 = \{s \in \Lambda_1 \mid \forall t \in \Lambda_2 , tst^{-1} \in \Lambda_1\} \; .$$
Similarly, define $G_2 < \Lambda_2$. Note that $G_1$ and $G_2$ are normal subgroups of $\Lambda_0$. Since $\Lambda_0 = \Lambda_1 \Lambda_2$ and $\Lambda_1 \cap \Lambda_2 = \{e\}$, we also have that $G_1$ and $G_2$ commute. Rewriting \eqref{eq.crucial-one} as
$$\om_1(g,y) \, \om_2(h,z) = \om_2(h,g \cdot z) \, \om_1(g,h^{-1} \cdot y) \; ,$$
we find that for all $g \in T_1$, $h \in T_2$ and a.e.\ $y,y' \in \Soh$, $z \in \Sth$
$$\om_2(h,z)^{-1} \, \om_1(g,y')^{-1} \, \om_1(g,y) \, \om_2(h,z) \in \Lambda_1 \; .$$
Since $L(H_2) = B_2 \rtimes \Lambda_2$, the essential range of $\om_2$ equals $\Lambda_2$. It thus follows that
$$\om_1(g,y')^{-1} \, \om_1(g,y) \in G_1$$
for all $g \in T_1$ and a.e.\ $y,y' \in \Soh$. For every $g \in T_1$, we choose $\delta_1(g) \in \Lambda_1$ such that $\om_1(g,y) = \delta_1(g)$ on a non-negligible set of $y \in \Soh$. We conclude that $\om_1(g,y) = \delta_1(g) \, \mu_1(g,y)$ with $\mu_1(g,y) \in G_1$ a.e. We similarly decompose $\om_2(h,z) = \delta_2(h) \, \mu_2(h,z)$.

With these decompositions of $\om_1$ and $\om_2$ and using that $G_1, G_2$ are commuting normal subgroups of $\Lambda_0$, it follows from \eqref{eq.crucial-one} that for all $g \in T_1$, $h \in T_2$, the commutator $\delta_2(h)^{-1} \delta_1(g)^{-1} \delta_2(h) \delta_1(g)$ belongs to $G_1 G_2$, so that it can be uniquely written as $\eta_1(g,h) \eta_2(g,h)^{-1}$ with $\eta_i(g,h) \in G_i$. It then follows from \eqref{eq.crucial-one} that
\begin{align}
\mu_1(g,h\cdot y) &= \delta_2(h) \, \eta_1(g,h) \, \mu_1(g,y) \, \delta_2(h)^{-1} \;\; ,\label{eq.muone}\\
\mu_2(h,g \cdot z) &= \delta_1(g) \, \eta_2(g,h) \, \mu_2(h,z) \, \delta_1(g)^{-1} \;\; ,\notag
\end{align}
almost everywhere. Since $S_1 < H_1$ is torsion free, $\Soh$ has no finite quotients and thus no proper closed finite index subgroups. It follows that finite index subgroups of $T_2$ act ergodically on $\Soh$. We claim that for every $g \in T_1$, the map $y \mapsto \mu_1(g,y)$ is essentially constant. To prove this claim, fix $g \in T_1$ and denote $\xi : \Soh \recht G_1 : \xi(y) = \mu_1(g,y)$. For every $h \in T_2$, define the permutation
$$\rho_h : G_1 \recht G_1 : \rho_h(s) = \delta_2(h) \, \eta_1(g,h) \, s \, \delta_2(h)^{-1} \; .$$
So, \eqref{eq.muone} says that $\xi(h \cdot y) = \rho_h(\xi(y))$ for all $h \in T_2$ and a.e.\ $y \in \Soh$. Defining $V_1 \subset G_1$ as the essential range of $\xi$, it follows that $\{\rho_h\}_{h \in T_2}$ is an action of $T_2$ on $V_1$. The push forward via $\xi$ of the Haar measure on $\Soh$ is a $\{\rho_h\}_{h \in T_2}$-invariant probability measure on the countable set $V_1$ and has full support. It follows that all orbits of the action $\{\rho_h\}_{h \in T_2}$ on $V_1$ are finite. Choosing $s \in V_1$, the set $\xi^{-1}(\{s\}) \subset \Soh$ is non-negligible and globally invariant under a finite index subgroup of $T_2$. It follows that $\xi(y) = s$ for a.e.\ $y \in \Soh$, thus proving the claim.

Similarly, for every $h \in T_2$, the map $z \mapsto \mu_2(h,z)$ is essentially constant. So we have proved that $\om_1(g,y) = \delta_1(g)$ and $\om_2(h,z) = \delta_2(h)$ a.e. But then, \eqref{eq.crucial-one} implies that $\Lambda_1$ and $\Lambda_2$ commute, so that $\Lambda_0$ is an abelian group.

Since $A_0$ is a factor, $B_0^{\Lambda_0} = \C 1$ and thus $L(\Lambda_0) \subset A_0$ is maximal abelian. Since $L(\Lambda_0) = L(\Lambda_1) \vee L(\Lambda_2)$ with $L(\Lambda_i) \subset L(H_i)$, the same reasoning as with $B_i \subset L(H_i)$, using \eqref{eq.criterion-commute}, gives us subgroups $T_i \subset H_i$ such that $L(\Lambda_i) = L(T_i)$ and $T_1 = T_2^\perp$, $T_2 = T_1^\perp$.
Since $L(H_i) = B_i \rtimes \Lambda_i$ with $B_i = L(S_i)$ and $L(\Lambda_i) = L(T_i)$, we get that $H_i = S_i \oplus T_i$.

So far, we have proved that $B_0 = L(S_1) \vee L(S_2)$ and $L(\Lambda_0) = L(T_1) \vee L(T_2)$. In any crossed product $B_0 \rtimes \Lambda_0$ by a faithful action, the only unitaries in $L(\Lambda_0)$ that normalize $B_0$ are the multiples of the canonical unitaries $\{v_s\}_{s \in \Lambda}$. Therefore, $\T T_1 T_2 = \T \Lambda_0$. We have thus proved that the gms decomposition $(B_0,\Lambda_0)$ is identical to the gms decomposition $(L(S_1) \vee L(S_2), T_1 T_2)$.

Since $\Lambda_0, H_1$ and $H_2$ are globally $\{\beta_g\}_{g \in \Gamma_0}$-invariant, it follows that $T_i$ is a globally $\SL(3,\Z)$-invariant subgroup of $H_i$. Thus, $T_i = Q_i^3$ for some subgroup $Q_i < L_i$. Since $B_0, H_1$ and $H_2$ are globally $\Gamma_0$-invariant, it follows in the same way that $S_i = P_i^3$ for some subgroups $P_i < L_i$. Then, $L_i = P_i \oplus Q_i$ and $P_1,P_2$, as well as $Q_1,Q_2$, are each other's orthogonal complement under $\Om$. So, $L_1 = P_1 \oplus Q_1$ is an admissible direct sum decomposition.
\end{proof}

We now combine Proposition \ref{prop.compute-invariant-gms-decompositions} with Theorem \ref{main-SV} and Proposition \ref{prop-final}. We fix once and for all $\Gamma = \F_\infty$, $\Gamma_0 = \SL(3,\Z)$ and a surjective homomorphism $\beta : \Gamma \twoheadrightarrow \Gamma_0$ so that the automorphism $g \mapsto (g^{-1})^T$ of $\Gamma_0$ lifts to an automorphism of $\Gamma$. An obvious way to do this is by enumerating $\Gamma_0 = \{g_0,g_1,\ldots\}$ and defining $\beta : \Gamma \recht \Gamma_0$ by $\beta(s_i) = g_{i}$ for $i \geq 0$, where $(s_i)_{i \in \N}$ are free generators of $\Gamma$. Note that $\Ker \beta$ is automatically nontrivial.

We also fix countable abelian torsion free groups $L_1,L_2$ and a dense embedding $L_1 \hookrightarrow \widehat{L_2}$. Put $H_i = L_i^3$ and let $\Gamma$ act on $H_1 \hookrightarrow \widehat{H_2}$ through $\beta$. Then define $A_0 = L^\infty(\widehat{H_2}) \rtimes H_1$ together with the natural action $\beta : \Gamma \actson (A_0,\tau_0)$. Put $(A,\tau) = (A_0,\tau_0)^\Gamma$ with the action $\Gamma \times \Gamma \actson (A,\tau)$ given by $(g,h) \cdot \pi_k(a) = \pi_{gkh^{-1}}(\beta_h(a))$ for all $g,h,k \in \Gamma$, $a \in A_0$. Write $M = A \rtimes (\Gamma \times \Gamma)$.

We call an automorphism of $L_1$ admissible if it extends to a continuous automorphism of $\widehat{L_2}$. We call an isomorphism $\theta : L_1 \recht L_2$ admissible if it extends to a continuous isomorphism $\widehat{L_2} \recht \widehat{L_1}$.

\begin{thm} \label{thm.list-of-all-gms}
Whenever $L_1 = P_1 \oplus Q_1$ is an admissible direct sum decomposition with corresponding $L_2 = P_2 \oplus Q_2$, we define $B(P_1,Q_1) := (L(P_1^3) \vee L(P_2^3))^\Gamma$ and $\Lambda(P_1,Q_1) = (Q_1^3 \oplus Q_2^3)^{(\Gamma)} \rtimes (\Gamma \times \Gamma)$.

\begin{itemlist}
\item Every $B(P_1,Q_1), \Lambda(P_1,Q_1)$ gives a gms decomposition of $M$.

\item Every gms decomposition of $M$ is unitarily conjugate with a $B(P_1,Q_1), \Lambda(P_1,Q_1)$ for a unique admissible direct sum decomposition $L_1 = P_1 \oplus Q_1$.

\item Let $L_1 = P_1 \oplus Q_1$ and $L_1 = P_1' \oplus Q_1'$ be two admissible direct sum decompositions with associated gms decompositions $(B,\Lambda)$ and $(B',\Lambda')$.
\begin{itemlist}
\item $(B,\Lambda)$ and $(B',\Lambda')$ are conjugate by an automorphism of $M$ if and only if there exists an admissible automorphism $\theta : L_1 \recht L_1$ with $\theta(P_1) = P_1'$, $\theta(Q_1) = Q_1'$, or an admissible isomorphism $\theta : L_1 \recht L_2$ with $\theta(P_1) = P_2'$, $\theta(Q_1) = Q_2'$.

\item The Cartan subalgebras $B$ and $B'$ are unitarily conjugate if and only if $P_1 = P_1'$.

\item The Cartan subalgebras $B$ and $B'$ are conjugate by an automorphism of $M$ if and only if there exists an admissible automorphism $\theta : L_1 \recht L_1$ with $\theta(P_1) = P_1'$ or an admissible isomorphism $\theta : L_1 \recht L_2$ with $\theta(P_1) = P_2'$.
\end{itemlist}
\end{itemlist}
\end{thm}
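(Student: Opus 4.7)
The plan is to assemble Theorem \ref{main-SV}, Proposition \ref{prop-final}, and Proposition \ref{prop.compute-invariant-gms-decompositions}. Since the $\Gamma$-action on $A_0$ factors through $\Gamma_0 = \SL(3,\Z)$, a gms decomposition of $A_0$ is $\{\beta_g\}_{g \in \Gamma}$-invariant iff $\Gamma_0$-invariant, and Proposition \ref{prop.compute-invariant-gms-decompositions} identifies the latter precisely with the admissible direct sum decompositions of $L_1$. Combining this with Theorem \ref{main-SV} yields existence of each $(B(P_1,Q_1),\Lambda(P_1,Q_1))$ as a gms decomposition of $M$ and, conversely, that every gms decomposition of $M$ is unitarily conjugate to one of these; uniqueness of the admissible decomposition within a unitary class then follows from Theorem \ref{main-SV}\ref{main.uc}. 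For the unitary-conjugacy bullet for Cartans, Proposition \ref{prop-final}\ref{final.one} reduces to $B_0 = B_0'$, i.e.\ $L(P_1^3 \oplus P_2^3) = L((P_1')^3 \oplus (P_2')^3)$; examining Fourier supports inside $L_1^3 \oplus L_2^3$ forces $P_1 = P_1'$.

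For the two automorphism-conjugacy bullets, Theorem \ref{main-SV}\ref{main.autc} (for gms decompositions) and Proposition \ref{prop-final}\ref{final.two} (for Cartans) both produce a trace preserving $\theta_0 \in \Aut(A_0)$ and a $\vphi \in \Aut(\Gamma)$ with $\theta_0 \circ \beta_g = \beta_{\vphi(g)} \circ \theta_0$, together with $\theta_0(B_0) = B_0'$ and, in the gms case, $\theta_0(\T\Lambda_0) = \T\Lambda_0'$. Since $\beta$ is faithful as a $\Gamma_0$-action on $A_0$, $\vphi$ descends to $\overline{\vphi} \in \Aut(\SL(3,\Z))$; using $\operatorname{Out}(\SL(3,\Z)) = \Z/2\Z$ generated by $g \mapsto (g^{-1})^T$, and noting that postcomposing $\theta_0$ with some $\beta_x$ affects neither the gms nor the Cartan conjugacy class, we may assume $\overline{\vphi} = \id$ or $\overline{\vphi}(g) = (g^{-1})^T$.

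The key technical step is to match each such $\theta_0$ with an admissible (iso)morphism of $L_1$. Repeating the fixed-point analysis from the proof of Proposition \ref{prop.compute-invariant-gms-decompositions}, whereby $L(H_1^{(i)})$ is the algebra of fixed points in $A_0$ of the stabilizer of $e_i \in H_1$, we conclude that $\theta_0$ globally preserves $L(H_1)$ and $L(H_2)$ in the $\overline{\vphi} = \id$ case and swaps them in the outer case. A Schur-type argument on the centralizer of the matrix action of $\SL(3,\Z)$ in $\operatorname{End}(L_1^3)$ forces $\theta_0|_{L(H_1)}$ to be the coordinatewise action of some $\psi \in \Aut(L_1)$ in the first case, respectively an analogous group isomorphism $\psi : L_1 \to L_2$ in the second. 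The $\Om$-commutation relations $u_g u_h = \Om(g,h) u_h u_g$ then translate into $\Om(\psi(v),\psi'(w)) = \Om(v,w)$ for the companion $\psi'$ on the other factor, which is precisely the admissibility of $\psi$. Finally, $\theta_0(B_0) = B_0'$ becomes $\psi(P_1) = P_1'$ (respectively $P_2'$), and in the gms case $\theta_0(\T\Lambda_0) = \T\Lambda_0'$ additionally imposes $\psi(Q_1) = Q_1'$ (respectively $Q_2'$), yielding the stated criteria.

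The most delicate point will be this $\SL(3,\Z)$-equivariance analysis of $\theta_0|_{L(H_i)}$ and the bookkeeping that turns admissibility of $\psi$ into the condition that $\theta_0$ is a well-defined trace preserving automorphism of $A_0$ respecting the $\Om$-twisted commutation.
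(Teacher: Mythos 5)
Your overall route is the same as the paper's: assemble Proposition \ref{prop.compute-invariant-gms-decompositions}, Theorem \ref{main-SV} and Proposition \ref{prop-final}, reduce the two automorphism-conjugacy bullets to a description of all trace preserving automorphisms $\theta_0$ of $A_0$ that normalize the action of $\Gamma_0 = \SL(3,\Z)$, and split into the two cases $\overline{\vphi} = \id$ and $\overline{\vphi}(g) = (g^{-1})^T$ using that every automorphism of $\SL(3,\Z)$ is inner up to transpose-inverse and that composing $\theta_0$ with some $\beta_x$ is harmless. The first two bullets, the unitary-conjugacy statement for the Cartan subalgebras, and the final translation of $\theta_0(B_0)=B_0'$ (and, in the gms case, $\theta_0(\T\Lambda_0)=\T\Lambda_0'$) into conditions on $P_1, Q_1$ are all handled as in the paper.

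There is, however, a gap at the step you yourself flag as delicate. The fixed-point analysis only gives $\theta_0(L(H_1)) = L(H_1)$ and $\theta_0(L(H_2)) = L(H_2)$ (or the swapped version) as von Neumann algebras. Your ``Schur-type argument on the centralizer of $\SL(3,\Z)$ in $\operatorname{End}(L_1^3)$'' applies to group endomorphisms of $H_1 = L_1^3$, so before it can be invoked you must know that $\theta_0|_{L(H_1)}$ is implemented, up to phases, by a group automorphism of $H_1$, i.e.\ that $\theta_0(\T H_1) = \T H_1$ (resp.\ $= \T H_2$ in the outer case). This does not follow from equivariance alone: a priori $\theta_0|_{L(H_1)}$ is just a trace preserving automorphism of $L(H_1) \cong L^\infty(\widehat{L_1}^3)$ commuting with the $\SL(3,\Z)$-action, and forcing such an automorphism to be algebraic is a nontrivial rigidity issue, certainly not available for an arbitrary torsion free abelian $L_1$. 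The paper fills this step by exploiting the position of $L(H_1)$ inside $A_0$ rather than the dynamics on $\widehat{L_1}^3$: for $g \in H_1$, the unitary $\theta_0(u_g)$ lies in $L(H_1)$, hence has scalar Fourier coefficients, and it normalizes the Cartan subalgebra $L(H_2) = L^\infty(\widehat{H_2})$ of $A_0$ because $u_g$ does and $\theta_0(L(H_2)) = L(H_2)$; since $H_1 \actson \widehat{H_2}$ is free, a unitary with scalar Fourier coefficients normalizing $L^\infty(\widehat{H_2})$ must be a scalar multiple of some canonical unitary, so $\theta_0(\T H_1) = \T H_1$, and similarly for $H_2$ and for the swapped case. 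Once this is in place, your commutant argument giving $\theta_1 = \theta^3$ with $\theta \in \Aut(L_1)$, and the $\Om$-bookkeeping identifying admissibility, go through exactly as in the paper.
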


\begin{proof}
Because of Proposition \ref{prop.compute-invariant-gms-decompositions}, Theorem \ref{main-SV} and Proposition \ref{prop-final}, it only remains to describe all automorphisms $\psi : A_0 \recht A_0$ that normalize the action $\beta : \Gamma \actson A_0$. This action $\beta$ is defined through the quotient homomorphism $\Gamma \twoheadrightarrow \Gamma_0$. Every automorphism of $\Gamma_0 = \SL(3,\Z)$ is, up to an inner automorphism, either the identity or $g \mapsto (g^{-1})^T$. So, we only need to describe all automorphisms $\psi : A_0 \recht A_0$ satisfying either $\psi \circ \beta_g = \beta_g \circ \psi$ for all $g \in \Gamma_0$, or $\psi \circ \beta_g = \beta_{(g^{-1})^T} \circ \psi$.

In the first case, reasoning as in the first paragraphs of the proof of Proposition \ref{prop.compute-invariant-gms-decompositions}, we get that $\psi(L(H_i)) = L(H_i)$ for $i=1,2$. So, for every $g \in H_1$, $\psi(u_g)$ is a unitary in $L(H_1)$ that normalizes $L(H_2)$. This forces $\psi(u_g) \in \T H_1$ and we conclude that $\psi(\T H_1) = \T H_1$. Similarly, $\psi(\T H_2) = \T H_2$. In the second case, we obtain in the same way that $\psi(\T H_1) = \T H_2$ and $\psi(\T H_2) = \T H_1$. The further analysis is analogous in both cases and we only give the details of the first case.

We find automorphisms $\theta_i : H_i \recht H_i$ such that $\psi(u_g) \in \T u_{\theta_i(g)}$ for all $i = 1,2$ and $g \in H_i$. Since $\theta_i$ commutes with the action of $\SL(3,\Z)$ on $H_i$, we get that $\theta_1 = \theta^3$, where $\theta : L_1 \recht L_1$ is an admissible automorphism. It follows that $\psi$ maps the gms decomposition associated with $L_1 = P_1 \oplus Q_1$ to the gms decomposition associated with $L_1 = \theta(P_1) \oplus \theta(Q_1)$. This concludes the proof of the theorem.
\end{proof}

The following concrete examples provide a proof for Theorem \ref{thmA}.

\begin{thm}\label{thm.exactly-n}
For all $n \geq 1$, consider the following two embeddings $\pi_i : \Z^n \hookrightarrow \T^{2n}$.
\begin{itemlist}
\item $\pi_1(k)= (\al_1^{k_1},\al_2^{k_1},\ldots,\al_{2n-1}^{k_n},\al_{2n}^{k_n})$ for rationally independent irrational angles $\al_j \in \T$.
\item $\pi_2(k)= (\al^{k_1},\be^{k_1},\ldots,\al^{k_n},\be^{k_n})$ for rationally independent irrational angles $\al,\be \in \T$.
\end{itemlist}
Applying Theorem \ref{thm.list-of-all-gms} to the embeddings $\pi_1$ and $\pi_2$, we respectively obtain
\begin{itemlist}
\item a II$_1$ factor $M$ that has exactly $2^n$ gms decompositions up to unitary conjugacy, and with the associated $2^n$ Cartan subalgebras non conjugate by an automorphism of $M$~;
\item a II$_1$ factor $M$ that has exactly $n+1$ gms decompositions up to conjugacy by an automorphism of $M$, and with the associated $n+1$ Cartan subalgebras non conjugate by an automorphism of $M$.
\end{itemlist}
\end{thm}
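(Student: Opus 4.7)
My plan is to apply Theorem \ref{thm.list-of-all-gms} directly. For each embedding $\pi_i$ we have $L_1 = \Z^n$ and $L_2 = \Z^{2n}$, of different ranks, so no admissible isomorphism $L_1 \to L_2$ exists. The problem therefore reduces to (a) classifying the admissible direct sum decompositions of $\Z^n$, (b) determining the group of admissible automorphisms $G_i < \GL_n(\Z)$, and (c) computing orbits on decompositions and on single direct summands.

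\textbf{Admissibility via the bicharacter.} An automorphism $\theta \in \GL_n(\Z)$ of $L_1$ is admissible iff there exists $\theta^* \in \GL_{2n}(\Z)$ with $\Om(\theta(k), m) = \Om(k, \theta^*(m))$ for all $k \in L_1$, $m \in L_2$. Writing $\Om$ explicitly as $\prod_i (\al_{2i-1}^{m_{2i-1}} \al_{2i}^{m_{2i}})^{k_i}$ for $\pi_1$ and as the analogue with the repeated $\al, \be$ for $\pi_2$, and then taking logarithms, the rational independence of $\{\al_1, \ldots, \al_{2n}\}$ in the $\pi_1$ case forces $\theta$ to be diagonal, so $G_1 = \{\pm 1\}^n$; for $\pi_2$ every $\theta \in \GL_n(\Z)$ works with $\theta^* = \theta^T \oplus \theta^T$ on the odd and even indices of $L_2$, so $G_2 = \GL_n(\Z)$. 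A parallel annihilator computation handles decompositions: the annihilator of $\pi_1(P_1)$ in $L_2$ consists of those $m$ with $m_{2i-1} = m_{2i} = 0$ for every $i$ in the support $S(P_1) \subset \{1, \ldots, n\}$ of $P_1$; admissibility of $\Z^n = P_1 \oplus Q_1$ then forces $S(P_1) \sqcup S(Q_1) = \{1, \ldots, n\}$, and since $P_1, Q_1$ are direct summands of $\Z^n$ of matching ranks, one concludes $P_1 = \Z^{S(P_1)}$ and $Q_1 = \Z^{S(Q_1)}$. Thus $\pi_1$ admits exactly the $2^n$ coordinate decompositions; for $\pi_2$ the standard splittings are admissible by the same annihilator computation, and every other decomposition is obtained from a standard one by some $\theta \in \GL_n(\Z) = G_2$, hence is also admissible.

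\textbf{Orbit counting and main obstacle.} Plugging into Theorem \ref{thm.list-of-all-gms}, for $\pi_1$ we obtain $2^n$ gms decompositions of $M$ up to unitary conjugacy; since $G_1 = \{\pm 1\}^n$ preserves every $\Z^S$ setwise, no two of these are conjugate by an automorphism of $M$, and likewise for the associated $2^n$ Cartan subalgebras. For $\pi_2$, $\GL_n(\Z)$ acts transitively on pairs $(P_1, Q_1)$ of complementary direct summands of fixed rank $k$ (choose adapted bases on each side), so there are exactly $n+1$ orbits of decompositions, indexed by $k \in \{0, 1, \ldots, n\}$; the same rank invariant yields $n+1$ orbits of single direct summands $P_1$, giving $n+1$ Cartan subalgebras up to conjugacy by an automorphism of $M$. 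The only non-routine step is the admissibility computation in the previous paragraph: rational independence of all $2n$ angles must be invoked to block ``mixing'' admissible automorphisms for $\pi_1$ (and hence non-coordinate admissible decompositions), while the repetition of $\al, \be$ across coordinates in $\pi_2$ is precisely what accommodates all of $\GL_n(\Z)$ and hence all direct sum decompositions.
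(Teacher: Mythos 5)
Your proposal is correct and takes essentially the same route as the paper's proof: identify the admissible direct sum decompositions and admissible automorphisms for each embedding, observe that no isomorphism $\Z^n \recht \Z^{2n}$ exists, and count orbits (the $2^n$ coordinate decompositions, fixed by the admissible group $\{\pm1\}^n$, for $\pi_1$; the rank of $P_1$ as complete $\GL(n,\Z)$-invariant giving $n+1$ classes for $\pi_2$). The only difference is that you spell out the bicharacter/annihilator computations verifying which decompositions and automorphisms are admissible, which the paper states without proof.
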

\begin{proof}
Whenever $\cF \subset \{1,\ldots,n\}$, we have the direct sum decomposition $\Z^n = P(\cF) \oplus P(\cF^c)$ where $P(\cF) = \{x \in \Z^n \mid \forall i \not\in \cF, x_i = 0 \}$.

In the case of $\pi_1$, these are exactly all the admissible direct sum decompositions of $\Z^n$. Also, the only admissible automorphisms of $\Z^n$ are the ones of the form $(x_1,\ldots,x_n) \mapsto (\eps_1 x_1,\ldots,\eps_n x_n)$ with $\eps_i = \pm 1$. Since $\Z^n \not\cong \Z^{2n}$, there are no isomorphisms ``exchanging $L_1$ and $L_2$''.

In the case of $\pi_2$, all direct sum decompositions and all automorphisms of $\Z^n$ are admissible. For every direct sum decomposition $\Z^n = P_1 \oplus Q_1$, there exists a unique $k \in \{0,\ldots,n\}$ and an automorphism $\theta \in \GL(n,\Z)$ such that $\theta(P_1) = P(\{1,\ldots,k\})$ and $\theta(Q_1) = P(\{k+1,\ldots,n\})$. Again, there are no isomorphisms exchanging $L_1$ and $L_2$. So, the $n+1$ direct sum decompositions $\Z^n = P(\{1,\ldots,k\}) \oplus P(\{k+1,\ldots,n\})$, $0 \leq k \leq n$, exactly give the possible gms decompositions of $M$ up to conjugacy by an automorphism of $M$. When $k \neq k'$, there is no isomorphism $\theta \in \GL(n,\Z)$ with $\theta(P(\{1,\ldots,k\})) = P(\{1,\ldots,k'\})$. Therefore, the $n+1$ associated Cartan subalgebras are nonconjugate by an automorphism either.
\end{proof}

\begin{rem}
When $L_1,L_2$ are torsion free abelian groups and $L_1 \hookrightarrow \widehat{L_2}$ is a dense embedding, then the set of admissible homomorphisms $L_1 \recht L_1$ is a ring $\cR$ that is torsion free as an additive group. The admissible direct sum decompositions of $L_1$ are in bijective correspondence with the idempotents of $\cR$. As a torsion free ring, $\cR$ either has infinitely many idempotents, or finitely many that are all central, in which case their number is a power of $2$. So, the number of gms decompositions (up to unitary conjugacy) of the II$_1$ factors produced by  Theorem \ref{thm.list-of-all-gms} is always either infinite or a power of $2$.
\end{rem}

\begin{rem}
Still in the context of Theorem \ref{thm.list-of-all-gms}, we call a subgroup $P_1 < L_1$ admissible if $L_1 \cap \overline{P_1} = P_1$, where $\overline{P_1}$ denotes the closure of $P_1$ inside $\widehat{L_2}$. Note that $P_1 < L_1$ is admissible if and only if there exists a subgroup $P_2 < L_2$ such that $P_2 = P_1^\perp$ and $P_1 = P_2^\perp$. Whenever $P_1 < L_1$ is an admissible subgroup, we define $B(P_1) := (L(P_1^3) \vee L(P_2^3))^\Gamma$. It is easy to check that all $B(P_1)$ are Cartan subalgebras of $M$ and that $B(P_1)$ is unitarily conjugate with $B(P_1')$ if and only if $P_1 = P_1'$.

It is highly plausible that these $B(P_1)$ describe all Cartan subalgebras of $M$ up to unitary conjugacy. We could however not prove this because all our techniques make use of the dual coaction associated with a gms decomposition of $M$.

Also note that an admissible subgroup $P_1 < L_1$ cannot necessarily be complemented into an admissible direct sum decomposition $L_1 = P_1 \oplus Q_1$. In such a case, $B(P_1)$ is a Cartan subalgebra of $M$ that is not of group measure space type. However, $M$ can be written as a \emph{cocycle} crossed product of $B(P_1)$ by an action of $\Lambda(P_1) = \bigl( (L_1/P_1)^3 \oplus (L_2 / P_2)^3 \bigr)^{(\Gamma)} \rtimes (\Gamma \times \Gamma)$, but the cocycle is nontrivial.
\end{rem}

\end{document}